\newcommand{\bfd}{{\bf d}}
\newcommand{\bfb}{{\bf b}}
\DeclareMathOperator{\Ht}{ht}
\DeclareMathOperator{\Diag}{Diag}
\newcommand{\Section}[1]{%
\renewcommand{\thesection}{\S\arabic{section}}
\section{#1}
\renewcommand{\thesection}{\arabic{section}}
\setcounter{equation}{0}}
\newcommand{\qed}{\hfill \hbox{\rule[-2pt]{4pt}{7pt}}}
\newcommand{\proof}{{\hspace*{0.4cm} {\it Proof}.\ \enskip}}
\newtheorem{lem}{Lemma}[section]
\newtheorem{thm}[lem]{Theorem}  
\newtheorem{prop}[lem]{Proposition}
\newtheorem{cor}[lem]{Corollary}
\newtheorem{rem}[lem]{Remark}
\newcommand{\Z}{{\Bbb Z}}
\newcommand{\R}{{\Bbb R}}
\newcommand{\C}{{\Bbb C}}
\newcommand{\calH}{{\cal H}}
\newcommand{\calO}{{\cal O}}
\newcommand{\calR}{{\cal R}}
\newcommand{\calS}{{\cal S}}
\newcommand{\calU}{{\cal U}}
\newcommand{\pair}[2]{\left\langle {#1}, \, {#2}\right\rangle}
\newcommand{\set}[2]{\left\{\left.#1\vphantom{#2}\:\right\vert\:#2\right\}}
\newcommand{\wt}{\widetilde}
\newcommand{\what}{\widehat}
\newcommand{\fra}{{\frak a}}
\newcommand{\frp}{{\frak p}}
\newcommand{\lam}{{\lambda}}
\newcommand{\Lam}{{\Lambda}}
\newcommand{\ve}{{\varepsilon}}
\newcommand{\alp}{{\alpha}}  
\newcommand{\vphi}{{\varphi}} 
\newcommand{\eps}{{\epsilon}} 
\newcommand{\slit}{\vspace{5mm}}
\newcommand{\mslit}{\vspace{3mm}}
\newcommand{\real}{{\rm Re}}
\newcommand{{\any}}{{}^\forall}
\newcommand{{\is}}{{}^\exists}
\newcommand{{\st}}{\; {\rm s.t.}\; }
\newcommand{\ol}[1]{\overline{#1}}
\newcommand{\hec}{{\calH(G,K)}}
\newcommand{\SKX}{{\calS(K \backslash X)}}
\newcommand{\ch}{{\mbox{ch}}}
\newcommand{\abs}[1]{\left\vert{#1}\right\vert}  
\newcommand{\dint}[1]{\displaystyle{\int_{#1}}}
\newcommand{\dprod}[1]{\displaystyle{\prod_{#1}}}
\newcommand{\shita}[2]{\stackrel{\scriptstyle{#1}}{#2}}
\newcommand{\twomatrix}[4]{\begin{pmatrix}
                           {#1} & {#2}\\
                           {#3} & {#4}
                          \end{pmatrix}}
\newcommand{\twomatrixplus}[4]{\left(\begin{array}{c|c}
                           {#1}  & {#2}\\
                           \hline
                           {#3} &  {#4}
                          \end{array}\right)}
\newcommand{\mapright}[1]{\displaystyle{
   \smash{\mathop{\hbox to 1cm{\rightarrowfill}}^{#1}}}}
\newcommand{\gyaddots}{%
\setlength{\unitlength}{1mm}
\begin{picture}(5,3.5)(-2,-1.5)
\put(0,0){$\cdot$}
\put(2,1.5){$\cdot$}
\put(-2,-1.5){$\cdot$}
\end{picture}}
\newcommand{\gyakuddots}{\smash{\lower0.3ex\hbox{\gyaddots}}}
\begin{document}
\title{Spherical functions on the space of \\$p$-adic unitary hermitian matrices II, \\
the case of odd size }
\author{Yumiko Hironaka\\
{\small Department of Mathematics, }\\
{\small Faculty of Education and Integrated Sciences, Waseda University}\\
{\small Nishi-Waseda, Tokyo, 169-8050, JAPAN},\\
{\small and}\\
{Yasushi Komori}\\
{\small Department of Mathematics, }\\
{\small Faculty of Science, Rikkyo University}\\
{\small Nishi-Ikebukururo, Tokyo, 171-8501, JAPAN}\\
}

\date{}

\maketitle

\hfill \today
\begin{abstract}
We are interested in the harmonic analysis on $p$-adic homogeneous spaces based on spherical functions. In the present paper, we investigate the space $X$ of unitary hermitian matrices of odd size over a $\frp$-adic field of odd residual characteristic, which is a continuation of our previous paper where we have studied for even size matrices. 
First we give the explicit representatives of the Cartan decomposition of $X$ and introduce a typical spherical function $\omega(x;z)$ on $X$. After studying the functional equations, we give an explicit formula for $\omega(x;z)$, where Hall-Littlewood polynomials of type $C_n$ appear as a main term, though the unitary group acting on $X$ is of type $BC_n$. By spherical transform, we show the Schwartz space $\SKX$ is a free Hecke algebra $\hec$-module of rank $2^n$, where $2n+1$ is the size of matrices in $X$, and give parametrization of all the spherical functions on $X$ and the explicit Plancherel formula on $\SKX$. 
\end{abstract}

\bigskip
\begin{minipage}{15cm}
Keywords: Spherical functions, Plancherel formula, unitary groups, hermitian matrices, Hall-Littlewood polynomials.

\bigskip
Mathematics Subject Classification 2010: 11E85, 11E95, 11F70, 22E50, 33D52.     

\end{minipage}

\setcounter{section}{-1}
\Section{Introduction}


 
We have been interested in the harmonic analysis on $p$-adic homogeneous spaces based on spherical functions. We have considered the space of $p$-adic unitary hermitian matrices of even size in \cite{HK}, and in the present paper we will study that of odd size.  The results can be formulated in parallel, though the groups acting the spaces have different root structures, $C_n$ for even and $BC_n$ for odd.  
The spaces have a natural close relation to the theory of automorphic functions and classical theory of sesquilinear forms (e.g. \cite{Oda}, \cite{Arakawa}). 

We fix an unramified quadratic extension $k'/k$ of $\frp$-adic field $k$ such that $2 \notin \frp$, and consider hermitian and unitary matrices with respect to $k'/k$, and denote by $a^*$ the conjugate transpose of $a \in M_{mn}(k')$.  Let $\pi$ be a prime element of $k$ and $q$ the cardinality of the residue class field $\calO_k/(\pi)$ and we normalize the absolute value on $k$ by $\abs{\pi} = q^{-1}$.

Denote by $j_m \in GL_m(k)$ the matrix whose all anti-diagonal entries are $1$ and others are $0$. Set
\begin{eqnarray*}
&&
G = U(j_m) = \set{g\in GL_m(k')}{gj_mg^* = j_m}, \quad K = G(\calO_{k'})\\
&&
X = \set{x \in X}{x^* = x, \; \Phi_{xj_m}(t) = \Phi_{j_m}(t)},\\
&&
g \cdot x = gxg^*, \quad (g \in G, \; x \in X),
\end{eqnarray*}
where $\Phi_y(t)$ is the characteristic polynomial of matrix $y$. We note that $X$ is a single $G(\ol{k})$-orbit over the algebraic closure $\ol{k}$ of $k$. 
Set
\begin{eqnarray*}
n = \left[\frac{m}{2}\right].
\end{eqnarray*}
According to the parity of $m$, $G$ has the root structure of type $C_n$ for even $m$ and type $BC_n$ for  odd $m$.
It is known in general that the spherical functions on various $p$-adic groups $\Gamma$ can be expressed in terms of the specialization of Hall-Littlewood polynomials of the corresponding root structure of $\Gamma$ (cf.\ \cite[\S 10]{Mac}, also \cite[Theorem 4.4]{Car}).  For the present space $X$, the main term of spherical functions can be written by using Hall-Littlewood polynomials of type $C_n$ with different specialization according to the parity of $m$ (cf.\ Theorem~3 below).

Let us note the results of even size in \cite{HK} and odd size in the present paper simultaneously, so that one can compare the results. 
In both papers, we study the Cartan decomposition and $G$-orbit decomposition in \S1, functional equations of spherical functions in \S2, explicit formulas of spherical functions in \S3, and harmonic analysis on $X$ in \S4. 

\slit
\noindent
{\bf Theorem~1} (1) {\it A set of complete representatives of $K \backslash X$ can be taken as}
\begin{eqnarray} \label{intro-Cartan}
\set{x_\lam}{\lam \in \Lam_n^+},
\end{eqnarray}
{\it where}
\begin{eqnarray*} 
&&
x_\lam = \left\{\begin{array}{ll}
\Diag(\pi^{\lam_1}, \ldots, \pi^{\lam_n}, \pi ^{-\lam_n}, \ldots, \pi^{-\lam_1})  & \textit{if }\; m=2n\\
\Diag(\pi^{\lam_1}, \ldots, \pi^{\lam_n}, 1, \pi ^{-\lam_n}, \ldots, \pi^{-\lam_1})  & \textit{if }\; m= 2n+1,
\end{array} \right. \\
&&
\Lam_n^+ = \set{\lam \in \Z^n}{\lam_1 \geq \lam_2 \geq \cdots \geq \lam_n \geq 0},
\end{eqnarray*}

\noindent
(2) {\it There are precisely two $G$-orbits in $X$ represented by $x_0 = x_{\bf0} = 1_m$ and $x_1 = x_{(1, 0, \ldots, 0)}$.}

\slit
The proof of Cartan decomposition for odd size needs a more delicate calculation than that for even size.  
If $k$ has even residual characteristic, there are some $K$-orbits without any diagonal matrix besides the above types, independent of the parity of the size. 

\mslit
A spherical function on $X$ is a $K$-invariant function on $X$ which is a common eigenfunction with respect to the convolutive action of the Hecke algebra $\hec$, and a typical one is constructed by Poisson transform from relative invariants of a parabolic subgroup.  We take the Borel subgroup $B$ consisting of upper triangular matrices in $G$.  
For $x \in X$ and $s \in \C^n$, we consider the following integral
\begin{eqnarray} \label{def-sph0}
\omega(x; s) = \int_{K} \prod_{i=1}^n \abs{d_i(k\cdot x)}^{s_i} dk,
\end{eqnarray}
where $d_i(y)$ is the determinant of the lower right $i$ by $i$ block of $y$, \; $1 \leq i \leq n$. 
Then the right hand side of (\ref{def-sph0}) is absolutely convergent for $\real(s_i) \geq 0, \; 1 \leq i \leq n$, and continued to a rational function of $q^{s_1}, \ldots , q^{s_n}$. Since $d_i(x)$'s are  relative $B$-invariants on $X$ such that 
\begin{eqnarray*}
d_i(p \cdot x) = \psi_i(p)d_i(x), \quad \psi_i(p) = N_{k'/k}(d_i(p)) \quad (p \in B, \; x \in X,\; 1 \leq i \leq n),
\end{eqnarray*}
we see $\omega(x; s)$ is a spherical function on $X$ which satisfies
\begin{eqnarray*}
&&
f * \omega(x; s) = \lam_s(f) \omega(x; s), \quad f \in \hec\\
&&
\lam_s(f) = \int_{B}f(p)\prod_{i=1}^n\abs{\psi_i(p)}^{-s_i} \delta(p)dp,
\end{eqnarray*}
where $dp$ is the left invariant measure on $B$ with modulus character $\delta$.  
We introduce the new variable $z \in \C$ related to $s$ by
\begin{eqnarray}
&&
s_i = -z_i + z_{i+1} -1 + \frac{\pi\sqrt{-1}}{\log q}, \quad 1 \leq i \leq n-1 \nonumber \\
&&
\label{eq:chgv}
s_n = \left\{\begin{array}{ll} 
     -z_n -\frac12+ \frac{\pi\sqrt{-1}}{\log q} & \mbox{if }m = 2n \\[2mm]
     -z_n -1+ \frac{\pi\sqrt{-1}}{2\log q} & \mbox{if } m = 2n+1, 
\end{array}\right.
\end{eqnarray}
and denote $\omega(x;z) = \omega(x;s)$ and $\lam_s = \lam_z$.  

The Weyl group $W$ of $G$ relative to $B$ acts on rational characters of $B$, hence on $z$ and $s$ also. The group $W$ is generated by $S_n$ which acts on $z$ by permutation of indices and by $\tau$ such that $\tau(z_1, \ldots, z_n)= (z_1, \ldots, z_{n-1}, -z_n)$. We will give the functional equation of $\omega(x;z)$ with respect to $W$. To describe the results we prepare some notation. 
We set 
\begin{eqnarray*}
&&
\Sigma^+ = \Sigma_s^+ \sqcup \Sigma_\ell^+,\\
&&
\Sigma_s^+ = \set{e_i + e_j, \; e_i -e_j}{1 \leq i < j \leq n}, \quad 
\Sigma_\ell^+ = \set{2e_i}{1 \leq i  \leq n}, 
\end{eqnarray*}
where $e_i \in \Z^n$ is the $i$-th unit vector, and define a pairing
\begin{eqnarray*}
\pair{\;}{} : \Z^n \times \C^n \longrightarrow \C, \; \pair{\alp}{z} = \sum_{i=1}^n \alp_iz_i.
\end{eqnarray*}

\slit
\noindent
{\bf Theorem~2} 
{\rm (1)} \textit{For any $\sigma \in W$, one has}
$$
\omega(x; z) = \dprod{\alp}\, 
\frac{1 - q^{\pair{\alp}{z}-1}}{q^{\pair{\alp}{z}} - q^{-1}} \cdot \omega(x; \sigma(z)),
$$
\textit{where $\alp$ runs over the set $\set{\alp \in \Sigma_s^+}{-\sigma(\alp) \in \Sigma^+}$ for $m = 2n$ and $\set{\alp \in \Sigma^+}{-\sigma(\alp) \in \Sigma^+}$ for $m = 2n+1$.}

\noindent
{\rm (2)} \textit{The function $G(z) \cdot \omega(x;z)$ is holomorphic and $W$-invariant, hence belongs to $\C[q^{\pm z_1}, \ldots, q^{\pm z_n}]^W$, where 
$$
G(z) = \dprod{\alp}\, \frac{1+q^{\pair{\alp}{z}}}{1-q^{\pair{\alp}{z}-1}}, 
$$
and $\alp$ runs over the set $\Sigma_s^+$ for $m = 2n$ and $\Sigma^+$ for $m = 2n+1$.}

\slit
As for the explicit formula of $\omega(x; z)$ it suffices to give for $x_\lam$ by Theorem 1 (1). 

\slit
\noindent
{\bf Theorem~3} {\it (Explicit formula) For each $\lam \in \Lam_n^+$, one has }
\begin{eqnarray*} 
\omega(x_\lam; z)
 = \frac{c_n}{G(z)} \cdot q^{\pair{\lam}{z_0
}}  \cdot Q_\lam(z; t),
\end{eqnarray*}
\textit{where $G(z)$ is given in Theorem 2,  $z_0$ is the value in $z$-variable corresponding to $s = {\bf 0}$,}
\begin{eqnarray*}
&&
c_n = \left\{\begin{array}{ll}
\dfrac{(1-q^{-2})^n}{w_m(-q^{-1})} & \textit{if } m = 2n\\
\dfrac{(1+q^{-1})(1-q^{-2})^n}{w_m(-q^{-1})} & \textit{if } m = 2n+1,
\end{array} \right. \quad w_m(t) = \prod_{i=1}^m (1-t^i),\\ 
&&
Q_\lam(z; t) = \sum_{\sigma \in W}\, \sigma\left(q^{-\pair{\lam}{z}} c(z; t) \right),\quad
c(z; t) = \prod_{\alp \in \Sigma^+}\, \frac{1 -t_\alp q^{\pair{\alp}{z}}}{1 - q^{\pair{\alp}{z}}}\\
&& 
t_\alp = \left\{ \begin{array}{ll} t_s & \textit{if } \alp \in \Sigma_s^+ \\ t_\ell & \textit{if } \alp \in \Sigma_\ell^+, \end{array}\right. \quad 
t_s = -q^{-1}, \quad t_\ell =  \left\{ \begin{array}{ll} q^{-1} & \textit{if } m = 2n\\ -q^{-2} & \textit{if } m = 2n+1. \end{array}\right.
\end{eqnarray*}

\slit
We see $Q_\lam(z; t) $ belongs to $\calR = \C[q^{\pm z_1}, \ldots, q^{\pm z_n}]^W$ by Theorem 2.  It is known that $Q_\lam(z;t) = W_\lam(t) P_\lam(z; t)$ with Hall-Littlewood polynomial $P_\lam(z;t)$ and Poincar\'{e} polynomial $W_\lam(t)$ and the set $\set{P_\lam(z; t)}{\lam \in \Lam_n^+}$ forms an orthogonal $\C$-basis for $\calR$ for each $t_\alp \in \R, \abs{t_\alp} < 1$ (cf.\ \cite[Appendix C]{HK}). As is written as above, we need the different specialization for $Q_\lam(z;t)$ according to the parity of $m$, and $G(z)$ is different also. 

In particular, we have
\begin{eqnarray} \label{at origin}   
\omega(x_0; z) = \dfrac{(1-q^{-1})^n w_n(-q^{-1})w_{m'}(-q^{-1})}{w_{m}(-q^{-1})} \cdot G(z)^{-1},\qquad m'=\left[\tfrac{m+1}{2}\right]
\end{eqnarray}
and we may modify the spherical function $\omega(x; z)$ as
\begin{eqnarray} \label{modified sph}
\Psi(x; z) = \frac{\omega(x; z)}{\omega(x_0; z)} \in \calR.
\end{eqnarray}
We define the spherical Fourier transform on the Schwartz space $\SKX$ as follows
\begin{eqnarray*}
&&
\what{\; } :  \SKX  \longrightarrow  \calR, \;  \vphi  \longmapsto  \what{\vphi}(z) = \int_{X} \vphi(x) \Psi(x;z) dx
\end{eqnarray*}
where $dx$ is a $G$-invariant measure on $X$.

\slit
\noindent
{\bf Theorem~4} 
{\it
{\rm (1)} The above spherical transform is an $\hec$-module isomorphism and $\SKX$ is a free $\hec$-module of rank $2^n$.

\noindent
{\rm (2)} All the spherical Fourier functions on $X$ are parametrized by $z \in \left(\C\big{/}\frac{2\pi\sqrt{-1}}{\log q} \right)^n \big{/} W $ through $\lam_z$, and the set $\set{\Psi(x; z + u)}{u \in \{0, \frac{\pi\sqrt{-1}}{\log q} \}^n }$ forms a $\C$-basis of spherical functions corresponding to $z$.

\noindent
{\rm (3)} (Plancherel formula) Set a measure $d\mu(z)$ on $\fra^* = \left\{\sqrt{-1}\left(\R \big{/} \frac{2\pi}{\log q} \Z \right) \right\}^n$ by
\begin{eqnarray*}
d\mu(z) = \frac{1}{2^n n!} \cdot \frac{w_n(-q^{-1}) w_{m'}(-q^{-1})}{(1+q^{-1})^{m'}} \cdot \frac{1}{\abs{c(z;t)}^2} dz,
\qquad
m'=\left[\tfrac{m+1}{2}\right]
\end{eqnarray*}
where $dz$ is the Haar measure on $\fra^*$. By an explicitly given normalization of $dx$ depending on the parity of $m$, one has
\begin{eqnarray*}
\int_{X} \vphi(x)\ol{\psi(x)} dx = \int_{\fra^*}\what{\vphi}(z) \ol{\what{\psi}(z)} d\mu(z) \quad (\vphi, \psi \in \SKX).
\end{eqnarray*}

\noindent
{\rm (4)} (Inversion formula) For any $\vphi \in \SKX$, one has%
\begin{eqnarray*}
\vphi(x)  = \int_{\fra^*} \what{\vphi}(z) \Psi(x; z) d\mu(z), \quad x \in X.
\end{eqnarray*}
}
\slit
\noindent




\vspace{2cm}
\Section{The Space $X$ of unitary hermitian matrices}


%
{\bf 1.1.}
Let $k'$ be an unramified quadratic extension of a $\mathfrak{p}$-adic field $k$ of odd residual characteristic and consider hermitian and unitary matrices with respect to $k'/k$, and denote by $a^*$ the conjugate transpose of $a \in M_{mn}(k')$.  Let $\pi$ be a prime element of $k$ and $q$ the cardinality of the residue class field $\calO_k/(\pi)$ and we normalize the absolute value on $k$ by $\abs{\pi} = q^{-1}$ and
denote by $v_\pi(\; )$ the additive value on $k$. 
We fix a unit $\epsilon \in \mathcal{O}_k^\times$ for which $k' = k(\sqrt{\epsilon})$.

We consider the unitary group
\begin{equation*}
G = G_n = \set{g \in GL_{2n+1}(k')}{g^*j_{2n+1}g = j_{2n+1}}, \qquad j_{2n+1} = \begin{pmatrix}
0 & {} & 1\\ {} & 
\iddots
& {}\\1 & {} & 0
\end{pmatrix} \in M_{2n+1},
\end{equation*}
and the space $X$ of unitary hermitian matrices in $G$
\begin{equation} \label{space X}
X = X_n  = \set{x \in G}{x^* = x, \; \Phi_{xj_{2n+1}}(t) = (t^2-1)^n(t-1)},
\end{equation}
where $\Phi_y(t)$ is the characteristic polynomial of the matrix $y$. 
It should be noted that \eqref{space X} implies $\det x=1$.
We note that $X$ is a single $G(\ol{k})$-orbit containing $1_{2n+1}$ over the algebraic closure $\ol{k}$ of $k$ (\cite[Appendix A]{HK}).
The group $G$ acts on $X$ by 
$$
g \cdot x = gxg^* 
= gxj_{2n+1}g^{-1}j_{2n+1}, \quad g \in G, \; x \in X.
$$



We fix a maximal compact subgroup $K$ of $G$ by
\begin{equation*}
  K = K_n=G\cap M_{2n+1}(\calO_{k'}),
\end{equation*}
(cf.\ \cite[\S 9]{Satake}), 
and take a  Borel subgroup $B$ of $G$, which consists of all the triangular matrices in $G$ and is given by
\begin{equation}
\label{eq:B}
B = \set{
  \begin{pmatrix}
    A&&\\
    &u&\\
    &&j_nA^{*-1}j_n
  \end{pmatrix}
  \begin{pmatrix}
    1_n&\beta&C\\
    &1&-\beta^*j_n\\
    &&1_n
  \end{pmatrix}}{
  \begin{aligned}
    &A\in B_n,\beta\in(k')^n,u\in\mathcal{O}^1_{k'},C\in M_n(k')\\
    &\beta\beta^*+Cj_n+j_nC^*=0_n
  \end{aligned}
},
\end{equation}
where $B_n$ is the set of all the upper triangular matrices in $GL_n(k')$,
 $\calO_{k'}^1 = \set{u \in \calO_{k''}^\times}{N(u) = 1}$.
Here and hereafter empty entries in matrices should be understood as $0$ and $N$ as the norm map $N_{k'/k}$.
The group $G$ satisfies the Iwasawa decomposition $G=BK=KB$.

In this section, we give the $K$-orbit decomposition and the $G$-orbit decomposition of the space $X$.

\bigskip
\begin{thm} \label{thm: Cartan}
The 
$K$-orbit decomposition of $X_n$ is given as follows:
\begin{equation} \label{K-orbits}
X_n = \bigsqcup_{\lambda \in \Lambda_n^+} K \cdot x_\lambda,
\end{equation}
where 
\begin{align*}
&
\Lambda_n^+ = \set{\lambda \in \mathbb{Z}^n}{\lambda_1 \geq \cdots \geq \lambda_n \geq 0},\\
&
x_\lambda 
= \Diag(\pi^{\lambda_1}, \ldots, \pi^{\lambda_n}, 1, \pi^{-\lambda_n}, \cdots, \pi^{-\lambda_1}).
\end{align*}
\end{thm}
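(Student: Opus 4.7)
The plan is to prove the theorem in two parts: (a) pairwise disjointness of the orbits $K \cdot x_\lambda$, and (b) every $x \in X_n$ lies in some such orbit.

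For (a), I would use that the elementary divisor type over $\calO_{k'}$ is invariant under $K$-congruence. For $k \in K$, we have $k^* = j_{2n+1}\, k^{-1}\, j_{2n+1} \in GL_{2n+1}(\calO_{k'})$ (since $k, j_{2n+1} \in GL_{2n+1}(\calO_{k'})$ and $j_{2n+1}^2 = 1$), so $x$ and $kxk^*$ share the same Smith normal form over $\calO_{k'}$. The elementary divisor multiset of $x_\lambda$ is $\{\pi^{\lambda_1}, \ldots, \pi^{\lambda_n}, 1, \pi^{-\lambda_n}, \ldots, \pi^{-\lambda_1}\}$, which determines $\lambda \in \Lambda_n^+$; hence distinct $\lambda$'s give disjoint orbits.

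For (b), I would induct on $n$, the case $n = 0$ being trivial. For $n \ge 1$ and $x \in X_n$, let $\lambda_1$ denote the maximum exponent appearing in the elementary divisors of $x$. The aim is to produce $k \in K_n$ such that
\begin{equation*}
k x k^* = \begin{pmatrix} \pi^{\lambda_1} & & \\ & x' & \\ & & \pi^{-\lambda_1} \end{pmatrix},\qquad x' \in X_{n-1},
\end{equation*}
after which the inductive hypothesis applied to $x'$ completes the proof. To build $k$: find a primitive vector $v \in L_0 = \calO_{k'}^{2n+1}$ that is $j_{2n+1}$-isotropic and satisfies $v^* x v \in \pi^{\lambda_1}\calO_k^\times$; use Witt-type extension for the $j_{2n+1}$-form over $\calO_{k'}$ to produce a companion $w$ which is $j_{2n+1}$-paired to $v$ and also isotropic; subtract a scalar multiple of $v$ from $w$ to arrange $v^* x w = 0$; and apply unit rescalings to $v$ and $w$ (exploiting the norm surjectivity $N_{k'/k} : \calO_{k'}^\times \twoheadrightarrow \calO_k^\times$, valid since $k'/k$ is unramified) to normalize $v^* x v = \pi^{\lambda_1}$ and $w^* x w = \pi^{-\lambda_1}$. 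Taking $v, w$ together with any $j_{2n+1}$-hyperbolic basis of $\{v, w\}^{\perp_{j_{2n+1}}}$ as the rows of a matrix in $K_n$ yields the desired $k$.

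The principal obstacle is the normalization of the central diagonal entry, an issue specific to the odd-size case. After the inductive reduction, one ends up with a diagonal matrix $\Diag(\pi^{\lambda_1} u_1, \ldots, u_{n+1}, \ldots, \pi^{-\lambda_1} u_{2n+1})$ with $u_i \in \calO_k^\times$ subject to $\prod_i u_i = 1$ (a consequence of $\det x = 1$, itself forced by $\Phi_{xj_{2n+1}}(t) = (t^2-1)^n (t-1)$). The central factor $u_{n+1}$ is untouchable by any diagonal element of $K$, whose middle slot must lie in the norm-one group $\calO_{k'}^1$; the discrepancy must instead be redistributed to the outer slots, using $N_{k'/k}$-surjectivity together with the product relation $\prod_i u_i = 1$. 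This rebalancing step, unnecessary in the even-size case, is the source of the extra delicacy alluded to in the introduction.
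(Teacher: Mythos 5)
Your part (a) is sound and coincides with the paper's own argument: the paper separates the orbits by quoting the $GL_{2n+1}(\calO_{k'})$-classification of hermitian matrices, which is exactly your Smith-normal-form observation. The problem is part (b), and it sits at the very first step: the existence of a primitive $j_{2n+1}$-isotropic vector $v$ with $v^*xv\in\pi^{\lambda_1}\calO_k^\times$ is asserted, not proved, and this is precisely where the entire difficulty of the theorem lives. Worse, even granting $v$ and its Witt companion $w$, your reduction does not yield the claimed block form: arranging $v^*xw=0$ kills only the corner entry of $kxk^*$, not the entries $v^*xu$ for $u$ running over a basis of the $j_{2n+1}$-orthogonal complement of $v,w$. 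For $kxk^*$ to equal $\Diag(\pi^{\lambda_1},x',\pi^{-\lambda_1})$ you need the plane spanned by $v,w$ to be stable under the involution $xj_{2n+1}$ (note $(xj_{2n+1})^2=1_{2n+1}$ on $X_n$), or at least $v^*xu\equiv 0 \pmod{\pi^{\lambda_1}}$ for all lattice vectors $u$, so that the remaining entries of the first row can be cleared by integral unipotent elements of $K$; nothing in your construction ensures this, and it is not cheap to ensure because $x$ has non-integral entries. Even the small correction step fails as stated: the scalar $c$ solving $v^*x(w-cv)=0$ is $v^*xw/\pi^{\lambda_1}$ up to a unit, which need be neither integral nor trace-free, whereas $w-cv$ remains $j_{2n+1}$-isotropic only when $\tr_{k'/k}(c)=0$. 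The paper's hard lemmas are exactly a witness that your missing step is nontrivial: the integral matrices $j_3+s^{-1}{\bf v}{\bf v}^*$ of Lemma \ref{lem:orbit2} do lie in $K\cdot 1_3$, but producing the required vector takes the elaborate construction of Lemma \ref{lem:orbit3} (the units $\rho,\gamma$, the choice $bb^*=\rho\gamma$, etc.).

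Your closing paragraph also misdiagnoses where the odd-size delicacy lies. For a \emph{diagonal} matrix in $X_n$, the relations $xj_{2n+1}xj_{2n+1}=1_{2n+1}$ and $\Phi_{xj_{2n+1}}(t)=(t^2-1)^n(t-1)$ force $a_ia_{2n+2-i}=1$ and then the central entry to be exactly $1$; no unit ever needs to be ``redistributed,'' which is why the paper disposes of diagonal matrices in one line using $N(\calO_{k'}^\times)=\calO_k^\times$. The genuine extra work in the odd case occurs \emph{before} reaching diagonal form: the base case $n=1$ (Lemmas \ref{explicit K1}--\ref{lem:orbit3}) and the treatment of minimal entries in the $(n+1)$-st row and column for $n\geq 2$ (Lemma \ref{lem:orbit7}). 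The paper's surjectivity proof is an induction on $n$ organized by the position of a minimal entry (Lemmas \ref{lem:orbit4}--\ref{lem:orbit7}), with the delicate analysis concentrated in $n=1$; your Witt-theoretic plan would be a genuinely different route if the missing existence lemma were supplied, but as written it assumes the crux of what is to be shown.
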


\bigskip
We recall the case of unramified hermitian matrices. The group $GL_m(k')$ acts on the set $\calH_m(k') = \set{x \in GL_m(k')}{x^* = x}$ by $g \cdot x  = gxg^*$, and it is known (cf. \cite{Jac})
\begin{eqnarray} \label{herm Cartan}
\calH_m(k') &=& \bigsqcup_{\lambda \in \Lambda_m} GL_m(\calO_{k'}) \cdot \pi^\mu  
\end{eqnarray}
where
\begin{eqnarray*}
\Lam_m = \set{\mu \in \Z^m}{\mu_1 \geq \cdots \geq \mu_m}, \quad \pi^\mu = \Diag(\pi^{\mu_1},\ldots, \pi^{\mu_m}).
\end{eqnarray*}
Hence, we see that $K \cdot x_\lam \cap K \cdot x_\mu = \emptyset$ if $\lam \ne \mu$ in $\Lam_n^+$. Moreover, since $N(\calO_{k'}^\times ) = \calO_k^\times$, we see that any diagonal $x \in X$ is reduced to some $x_\lam, \; \lam \in \Lam_n^+$ by the action of $K$.

\bigskip
\noindent
{\bf 1.2.}
In this subsection we prove Theorem \ref{thm: Cartan} for $n=1$. It is useful to write down the explicit form of $K_1$, which is easily checked.
\begin{lem} \label{explicit K1}
\begin{eqnarray*}
&&
K_1 = K_{1,1} \sqcup K_{1,2},\\
&&
K_{1,1}:= \set{g \in BjB \cap K}{g_{31} \in \calO_{k'}^\times} = \set{g \in K}{g_{31} \in \calO_{k'}^\times}\\[2mm]
&& \quad = \set{
\begin{pmatrix}\alp &&\\&u&\\&& \alp^{* -1} \end{pmatrix}
\begin{pmatrix}1  &-d^*&f\\&1&d\\&&1 \end{pmatrix}
\begin{pmatrix}  &&1\\&1&-b^*\\ 1&b & c \end{pmatrix}}
{\begin{array}{l}
  \alp \in \calO_{k'}^\times, \; u \in \calO_{k'}^1\\
b, d \in \calO_{k'}, \; c_0, f_0 \in \calO_k\\
c = -\frac{N(b)}{2} + c_0\sqrt{\ve}, \; f = -\frac{N(d)}{2} + f_0\sqrt{\ve}
\end{array}},\nonumber \\[2mm]
&&
K_{1,2}:= \set{g \in K}{g_{31} \in \frp}\\[2mm]
&&\quad
=\set{
\begin{pmatrix}\alp &&\\& u &\\&&\alp^{* -1} \end{pmatrix}
\begin{pmatrix}1  &&\\ b & 1 &  \\ c& -b^* & 1 \end{pmatrix}
\begin{pmatrix}  1& d & f\\ &1&-d^*\\  &  & 1 \end{pmatrix}}
{\begin{array}{l}
\alp \in \calO_{k'}^\times, \; u \in \calO_{k'}^1\\
b \in \pi\calO_{k'}, \; c_0 \in \pi\calO_k, \; d \in \calO_{k'}, \; f_0 \in \calO_k\\
c = -\frac{N(b)}{2} + c_0\sqrt{\ve}, \; f = -\frac{N(d)}{2} + f_0\sqrt{\ve}
\end{array} }.
\end{eqnarray*}
\end{lem}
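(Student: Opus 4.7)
I would first note that the decomposition $K_1 = K_{1,1} \sqcup K_{1,2}$ is tautological, since any entry of $g \in K$ lies in $\calO_{k'}$ and so $g_{31}$ is either a unit or in $\frp$. The inner equality $\set{g \in BjB \cap K}{g_{31}\in\calO_{k'}^\times} = \set{g \in K}{g_{31}\in\calO_{k'}^\times}$ would then follow from the Bruhat decomposition $G_1 = B \sqcup BjB$ together with the observation that $g \in BjB$ iff $g_{31} \ne 0$, which is standard for this rank-one group with long Weyl element $j$.

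For the explicit form of $K_{1,1}$, I would invoke the Bruhat factorization $g = t \cdot w \cdot (j_3 v)$, where $t = \Diag(\alp, u, \alp^{*-1})$ is the toral part and $w, v$ are upper unipotent elements of $G_1$. Unitarity of $w, v$ imposes $c + c^* + N(b) = 0$ and $f + f^* + N(d) = 0$, giving the shapes $c = -N(b)/2 + c_0\sqrt\ve$ and $f = -N(d)/2 + f_0\sqrt\ve$. A direct $3\times 3$ multiplication yields $g_{31} = \alp^{*-1}$, $g_{32} = \alp^{*-1} b$, $g_{33} = \alp^{*-1} c$, and analogous formulas for the remaining entries. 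Requiring $g \in K$ with $g_{31}$ a unit then forces $\alp \in \calO_{k'}^\times$ and sequentially $b, c, d, f \in \calO_{k'}$, hence $c_0, f_0 \in \calO_k$; the condition $u \in \calO_{k'}^1$ comes from $\det g = -\alp u \alp^{*-1} \in \calO_{k'}^1$.

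For $K_{1,2}$, the key preliminary step is to show that $g_{31} \in \frp$ forces $g_{11}, g_{22}, g_{33} \in \calO_{k'}^\times$ and $g_{21}, g_{32} \in \frp$. This I would extract from the unitarity identities $gj_3g^* = g^*j_3 g = j_3$: the $(1,1)$-entry of the second and the $(3,3)$-entry of the first give $N(g_{21}) = -2\real(\bar g_{11}g_{31})$ and $N(g_{32}) = -2\real(g_{31}\bar g_{33})$, both lying in $\frp$ when $g_{31}$ does; the relations $(gj_3g^*)_{13} = 1$ and $(gj_3g^*)_{22} = 1$ then force the diagonal entries to be units modulo $\frp$. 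Because $g_{11} \in \calO_{k'}^\times$, the element $g$ admits a unique $LDU$-factorization $g = t \cdot n_- \cdot n_+$ as in the lemma. Multiplying out yields $g_{11} = \alp$, $g_{31} = \alp^{*-1}c$, $g_{32} = \alp^{*-1}(cd - b^*)$, etc.; solving for the parameters and using $g_{31}, g_{32} \in \frp$ produces $c \in \frp$ and then $b \in \frp$, and the unitarity $c + c^* + N(b) = 0$ refines $c_0 \in \pi \calO_k$. The upper parameters $d \in \calO_{k'}, f_0 \in \calO_k$ stay at ordinary integrality, since no finer condition on the first row is imposed.

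The main obstacle is really only the $K_{1,2}$ bookkeeping: the single hypothesis $g_{31} \in \frp$ must be seen to propagate to strict $\frp$-integrality of \emph{every} strictly lower-triangular parameter as well as of $c_0$, while leaving the strictly upper-triangular parameters only $\calO$-integral. Once the appropriate unitarity relations are correctly applied, the matching between $g \in K$ and the stated parameter ranges is a routine matrix calculation, parallel to but slightly more delicate than the $K_{1,1}$ case.
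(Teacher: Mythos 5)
Your proposal is correct and is exactly the routine verification that the paper leaves implicit (it gives no proof beyond the remark that the lemma ``is easily checked''): Bruhat decomposition $G_1 = B \sqcup Bj_3B$ plus integrality bookkeeping of the factor entries for $K_{1,1}$, and the unitarity relations followed by a Gauss ($LDU$) factorization for $K_{1,2}$. One small point to tighten: existence and integrality of the $LDU$ factorization requires the $2\times 2$ leading principal minor to be a unit, not merely $g_{11}\in\calO_{k'}^\times$; this follows at once from what you already established ($g_{22}\in\calO_{k'}^\times$ and $g_{21}\in\frp$, so the minor is congruent to $g_{11}g_{22}$ mod $\frp$), hence the omission is cosmetic rather than a real gap.
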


\bigskip
We take an element $x \in X_1$, write it as 
\begin{eqnarray} \label{any x}
x = \begin{pmatrix}
  a & b & c \\
  b^* & d & f \\
  c^* & f^* & g
\end{pmatrix}, \qquad a, d, g\in k, \quad b, c, f\in k',
\end{eqnarray}
and show that the orbit $K\cdot x$ contains an element $\Diag(\pi^\ell, 1, \pi^{-\ell})$ for some $\ell \geq 0$. Since 
$x\in G$ is equivalent to $xj_3xj_3=1_3$ and $\Phi_{xj_3}(t)=(t^2-1)(t-1)$,
we obtain the following equations 
\begin{subequations}
\label{eq:fund_eqs}
\begin{align}
\label{eq:fund_eqs1}
    ag+bf+c^2&=1, \\
\label{eq:fund_eqs2}
    af^*+b(c+d)&=0,\\
\label{eq:fund_eqs3}
    a(c+c^*)+bb^*&=0,\\
\label{eq:fund_eqs4}
    b^*g+(c+d)f&=0,\\
\label{eq:fund_eqs5}
    bf+b^*f^*+d^2&=1,\\
\label{eq:fund_eqs6}
    (c+c^*)g+ff^*&=0,
\end{align}
\end{subequations}
and
\begin{eqnarray} \label{eq:char_poly}
\lefteqn{(t^2-1)(t-1)}\\
&=&(t-c)(t-c^*)(t-d)-(t-c)b^*f^*
-(t-c^*)bf-(t-d)ag-aff^*-bb^*g. \nonumber
\end{eqnarray}

\begin{lem}
\label{lem:orbit1}
If the element $x$ of (\ref{any x}) satisfies the following condition {\rm (i)} or {\rm (ii)}
\begin{enumerate}\def\labelenumi{\rm(\roman{enumi})}
\item $a \ne 0$ and $v_\pi(a)\leq v_\pi(b)$,  or $g \ne 0$ and $v_\pi(g)\leq v_\pi(f)$,
\item $ag=0$,
\end{enumerate}
then $K \cdot x$ contains $\Diag(\pi^\ell, 1, \pi^{-\ell})$ for some $\ell \geq 0$.
\end{lem}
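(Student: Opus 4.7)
The plan is to use the explicit decomposition $K_1=K_{1,1}\sqcup K_{1,2}$ of Lemma~\ref{explicit K1} to conjugate $x$ into a diagonal element, then normalize via the diagonal subgroup of $K_1$ together with the swap $j_3$. Since $j_3\in K_{1,1}$ (all inner parameters zero, $\alp=u=1$) and conjugation by it interchanges $a\leftrightarrow g$ and $b\leftrightarrow f$, it suffices to treat the first alternative of each case.

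For Case~(i), with $a\neq 0$ and $v_\pi(a)\leq v_\pi(b)$, I would apply the lower unipotent
\begin{equation*}
k_1=\begin{pmatrix}1 & & \\ \beta & 1 & \\ \gamma & -\beta^{*} & 1\end{pmatrix},\qquad \beta=-b^{*}/a,
\end{equation*}
with $\gamma$ determined by the unitarity of $k_1$. The hypothesis gives $\beta\in\calO_{k'}$; using \eqref{eq:fund_eqs3} in the form $a(c+c^{*})=-bb^{*}$, the unitarity condition simplifies to $\gamma=c/a$, so integrality of $\gamma$ reduces to the estimate $v_\pi(c)\geq v_\pi(a)$, which should follow from the remaining relations \eqref{eq:fund_eqs} together with the hypothesis. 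A direct matrix computation then yields $(k_1\cdot x)_{21}=(k_1\cdot x)_{31}=0$, and combining the trace condition $\tr(xj_{3})=c+d+c^{*}=1$ (read off from $\Phi_{xj_3}(t)=(t^{2}-1)(t-1)$) with $\det x=1$ forces $k_1\cdot x=\Diag(a,1,a^{-1})$. Finally, the diagonal subgroup $\Diag(u,1,u^{*-1})$ of $K_1$ (using $N(\calO_{k'}^{\times})=\calO_{k}^{\times}$) followed if necessary by $j_3$ brings the matrix into $\Diag(\pi^{\ell},1,\pi^{-\ell})$ with $\ell\geq 0$.

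For Case~(ii), WLOG $a=0$. Equation \eqref{eq:fund_eqs3} immediately forces $b=0$, and \eqref{eq:fund_eqs1} then gives $c^{2}=1$, so $c=\pm 1\in k$. If $g\neq 0$ a $j_3$-swap sends us into Case~(i). If also $g=0$, then \eqref{eq:fund_eqs6} forces $f=0$, \eqref{eq:fund_eqs5} forces $d=\pm 1$, and the trace condition pins down the signs ($c=1$, $d=-1$); the resulting anti-diagonal $x$ is then diagonalized by a $K_{1,1}$-element with nontrivial $b,c$-parameters in the Weyl-like factor, after which the diagonal normalization of Case~(i) applies.

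The main technical obstacle is the integrality check $v_\pi(c)\geq v_\pi(a)$ in Case~(i): the hypothesis controls only $\beta$, and the relation $\gamma=c/a$ forces one to exploit the full set of quadratic relations \eqref{eq:fund_eqs} together with the shape $c=-N(b)/2+c_0\sqrt{\ve}$ implicit in the $K_1$-parametrization. A secondary subtlety is the degenerate sub-case $a=g=0$ of~(ii), which is not reached by any single lower or upper unipotent reduction and requires the more flexible Weyl-like structure of $K_{1,1}$.
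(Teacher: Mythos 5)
Your Case (i) breaks down exactly at the step you flagged as ``the main technical obstacle'': the estimate $v_\pi(c)\geq v_\pi(a)$ does \emph{not} follow from the hypothesis, so the integrality of $\gamma=c/a$ genuinely fails. Consider
\begin{equation*}
x=\begin{pmatrix} \pi & 0 & \pi^{-1}\sqrt{\eps}\\ 0 & 1 & 0\\ -\pi^{-1}\sqrt{\eps} & 0 & \pi^{-3}(\pi^{2}-\eps) \end{pmatrix}.
\end{equation*}
One checks $(xj_3)^2=1_3$ and $\tr(xj_3)=1$, so $\Phi_{xj_3}(t)=(t^2-1)(t-1)$ and $x\in X_1$; it satisfies (i) with $a=\pi\neq0$, $b=0$, yet $\gamma=c/a=\pi^{-2}\sqrt{\eps}\notin\calO_{k'}$, so your $k_1\notin K$. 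The root cause is that the hypothesis controls $b$ against $a$ but says nothing about $c$ against $a$: the relations only tie $c$ to $g$ (after killing $b$ one gets $ag+c^2=1$), and $v_\pi(g)$ can be far more negative than $v_\pi(a)$. This is why the paper does not clear the whole first column against $a$ in one stroke. It first kills only $b$ with the lower unipotent (integrality needs only $v_\pi(a)\leq v_\pi(b)$, and the third entry $\mu$ is a free integral choice with $\mu+\mu^*=-\lambda\lambda^*$, not forced to be $c/a$); the relations \eqref{eq:fund_eqs} and \eqref{eq:char_poly} then give $f=0$, $d=1$, $c+c^*=0$, $ag+c^2=1$; next it applies $j_3$ so that $v_\pi(g)\leq v_\pi(a)$ and scales $g=\pi^{-\ell}$, $\ell\geq0$ (possible because $ag+c_1^2\eps=1$ with $\eps$ a nonsquare forces $v_\pi(ag)\leq0$); finally it kills $c$ with an \emph{upper} unipotent whose entry $-c_1\pi^{\ell}\sqrt{\eps}$ is integral since $v_\pi(c_1)\geq-\ell$. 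In short: $c$ must be cleared against the corner entry of smallest valuation, not against $a$; rescuing your one-stroke version would require an extra normalization $v_\pi(a)\leq v_\pi(g)$ plus a nontrivial valuation argument, none of which is in your sketch.

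Case (ii) also has a gap: the claim that ``$g\neq0$ plus a $j_3$-swap sends us into Case (i)'' fails whenever $v_\pi(f)\geq1$. With $a=b=0$ and $c=\pm1$, relation \eqref{eq:fund_eqs6} gives $\pm2g=-ff^*$, hence $v_\pi(g)=2v_\pi(f)>v_\pi(f)$, so neither alternative of (i) holds for $x$ or for $j_3\cdot x$ (concretely, $x$ with $f=\pi$, $g=-\pi^2/2$, $c=1$, $d=-1$ lies in $X_1$ and is reached by neither of your reductions). This is exactly the paper's sub-case $h=\pi^{\ell}$, $\ell\geq1$, which it handles by a dedicated lower unipotent (entries $-h/2$, $-h^2/8$, $h/2$) reducing $x$ to the anti-diagonal matrix, after which the diagonalization to $\Diag(-\tfrac12,1,-2)$ and the surjectivity of the norm (so $2\in N(\calO_{k'}^\times)$) give $1_3$. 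Your $g=0$ sub-case matches the paper's endgame, but the $\ell\geq1$ sub-case is simply not covered by your argument.
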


\proof 
As for the condition (i), it suffices to consider the case $a \ne 0$ and $v_\pi(a)\leq v_\pi(b)$, by the action of $j_3$. 
Considering the action of
$$
\begin{pmatrix}
  1 &&\\
\lambda &1& \\
\mu&-\lambda^*&1
\end{pmatrix}\in K, \quad \lambda,\mu\in\mathcal{O}_{k'}\; \mbox{such that}\; a\lambda+b^*=0, \; \lambda\lambda^*+\mu+\mu^*=0,
$$
we can assume
\begin{equation*}
  x=
  \begin{pmatrix}
    a & 0 & c \\
    0 & d & f \\
    c^* & f^* & g
  \end{pmatrix}.
\end{equation*}
Then \eqref{eq:fund_eqs} and
\eqref{eq:char_poly} imply
\begin{equation*}
  ag+c^2=1,\qquad
  f=0,\qquad 
  c+c^*=0,\qquad 
  d=1,
\end{equation*}
and therefore we can write as, after the action of $j_3$ if necessary,  
\begin{equation*}
  x=
  \begin{pmatrix}
    a & 0 & c_1\sqrt{\epsilon} \\
    0 & 1 & 0 \\
    -c_1\sqrt{\epsilon} & 0 & g
  \end{pmatrix}, \quad a,g,c_1\in k, \; ag+c_1^2\epsilon=1, v_\pi(a) \geq v_\pi(g).
\end{equation*}
Since $\sqrt{\eps} \notin k$, we see $v_\pi(ag) \leq 0$,  and by a suitable element of type $\Diag(\alp, 1, \alp^{* -1}) \in K$, we may assume $g = \pi^{-\ell}$ with $\ell \geq 0$. 
Since $v_\pi(c_1) \geq -\ell$, by the action of
\begin{equation*}
  \begin{pmatrix}
    1 & 0 & -c_1\pi^{\ell}\sqrt{\epsilon} \\
    0 & 1 & 0\\
    0 & 0 & 1
  \end{pmatrix}\in K,
\end{equation*}
we see that $K \cdot x$ contains $\Diag(\pi^\ell, 1, \pi^{-\ell})$.

As for the condition (ii), it suffices to consider the case $a=0$ by the action of $j_3$. From \eqref{eq:fund_eqs}, we have $b=0$ and hence 
$c=c^*=\pm1$ and $d=\pm1$.
Then $c=1$, $d=-1$ by \eqref{eq:char_poly}. Thus we have 
\begin{equation*}
  x=
  \begin{pmatrix}
    0 & 0 & 1 \\
    0 & -1 & f \\
    1 & f^* & g
  \end{pmatrix},
  \qquad 2g+ff^*=0.
\end{equation*}
Since
\begin{equation*}
  \Diag(u^{* -1},1,u)\cdot x=
  \begin{pmatrix}
    0 & 0 & 1 \\
    0 & -1 & u^*f \\
    1 & uf^* & uu^*g
  \end{pmatrix},    
\end{equation*}
we see that
\begin{equation*}
  K\cdot x\ni
  \begin{pmatrix}
    0 & 0 & 1 \\
    0 & -1 & h \\
    1 & h & -\frac{1}{2}h^2
  \end{pmatrix},
\qquad h=0\text{ or }h=\pi^{\ell}\quad(\ell\in\mathbb{Z}).
\end{equation*}
If $h = \pi^\ell$ with $\ell\leq 0$, it reduces to the case (i) and we have done. 
If $h = \pi^\ell$ with $\ell\geq 1$, we see
\begin{equation*}
K \cdot x \ni   \begin{pmatrix}
    1 & 0 & 0 \\
    -\frac{h}{2} & 1 & 0 \\
    -\frac{h^2}{8} & \frac{h}{2} &1
  \end{pmatrix} \cdot x =   
\begin{pmatrix}
    0 & 0 & 1 \\
    0 & -1 & 0 \\
    1 & 0 & 0
  \end{pmatrix}.
\end{equation*}
Since we have 
\begin{equation*}
K \cdot x \ni 
  \begin{pmatrix}
    1 & -1 & -1/2 \\
     & 1 & 1 \\
     & & 1
  \end{pmatrix}
  \begin{pmatrix}
 & & 1\\
 &1 & 1\\
1 & -1 &-1/2
  \end{pmatrix} \cdot \begin{pmatrix}
    0 & 0 & 1 \\
    0 & -1 & 0 \\
    1 & 0 & 0
  \end{pmatrix} = \Diag(-\frac12, 1, -2),
\end{equation*}
and $2 \in N(\calO_{k'}^\times)$,
we see $K \cdot x  \ni 1_3$, 
which completes the proof of Lemma~\ref{lem:orbit1}.
\qed

\begin{lem}
\label{lem:orbit2}
Assume the element $x$ of (\ref{any x}) satisfies the following condition
\begin{equation}
  \label{eq:cond_abgf}
  ag\neq 0, \quad v_\pi(a)>v_\pi(b), \quad \mbox{and}\quad v_\pi(g)>v_\pi(f).
\end{equation}
Then the orbit $K\cdot x$ contains the following matrix 
\begin{eqnarray}
\label{eq:diag4}
&&
j_3+
\begin{pmatrix}
    \pi^{2m}s^{-1}&\pi^m & r\pi^{m+\ell} \\
    \pi^m & s &\pi^\ell rs \\
    r^*\pi^{m+\ell}& \pi^\ell r^*s & \pi^{2\ell} rr^*s
  \end{pmatrix},\quad 
\begin{array}{l}
   m \geq \ell > 0\\
   r \in \mathcal{O}_{k'}^\times, \; s \in \calO_k^\times \\
  \pi^{m + \ell}(r+r^*) + s + 2 = 0,
\end{array} \nonumber \\
&=&
j_3 + \frac{1}{s}{\bf v}{\bf v^*}, \quad {\bf v} = \begin{pmatrix}\pi^m \\ s\\ \pi^\ell r^*s \end{pmatrix}.
\end{eqnarray} \nonumber
\end{lem}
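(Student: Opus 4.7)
The key structural observation is that $x-j_3$ has rank one. Indeed, from $x^*=x$ and $x^*j_3x=j_3$ one gets $j_3xj_3=x^{-1}$, hence $(xj_3)^2=1_3$; combined with $\Phi_{xj_3}(t)=(t-1)^2(t+1)$ this forces $xj_3-1_3$ to have rank $1$, and so does $x-j_3=(xj_3-1_3)j_3$. Since $x-j_3$ is also hermitian we may write
\begin{equation*}
x=j_3+c\,\mathbf{v}\mathbf{v}^*,\qquad c\in k^\times,\quad\mathbf{v}={}^t(v_1,v_2,v_3)\in(k')^3,
\end{equation*}
uniquely up to the rescaling $(c,\mathbf{v})\sim(c/N(t),t\mathbf{v})$ for $t\in(k')^\times$. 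Expanding $xj_3x=j_3$ via $j_3^2=1_3$ yields $2c+c^2(\mathbf{v}^*j_3\mathbf{v})=0$, i.e.\ $c(\mathbf{v}^*j_3\mathbf{v})=-2$, from which the final constraint will drop out.

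Since $gj_3g^*=j_3$ for $g\in G$, the action $x\mapsto gxg^*$ translates to $\mathbf{v}\mapsto g\mathbf{v}$ with $c$ invariant under $K$. Matching entries $a=cN(v_1)$, $b=cv_1v_2^*$, $d=cN(v_2)$, $f=cv_2v_3^*$, $g=cN(v_3)$, the hypotheses \eqref{eq:cond_abgf} become $v_\pi(v_1)>v_\pi(v_2)$, $v_\pi(v_3)>v_\pi(v_2)$ and $v_2\ne0$. I now carry out three normalizations. First, rescale $\mathbf{v}$ by $t=cv_2^*\in(k')^\times$: the new middle coordinate becomes $cN(v_2)=d\in k$, and the identity $c(\mathbf{v}^*j_3\mathbf{v})=-2$ together with $v_\pi(\mathbf{v}^*j_3\mathbf{v})=2v_\pi(v_2)$ (the $N(v_2)$ term dominates since $v_\pi(v_1)+v_\pi(v_3)>2v_\pi(v_2)$) forces $v_\pi(d)=0$, so $d\in\calO_k^\times$; call it $s$, whence the new $c$ equals $1/s$. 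Second, if $v_\pi(v_1)<v_\pi(v_3)$, apply $j_3\in K$ (which swaps $v_1,v_3$ while fixing $v_2$) to ensure $v_\pi(v_1)\geq v_\pi(v_3)$. Third, apply $\Diag(\alpha,1,\alpha^{*-1})\in K$ with $\alpha=\pi^m/v_1\in\calO_{k'}^\times$, $m:=v_\pi(v_1)\geq 1$, so that $v_1=\pi^m$; the last coordinate becomes $\pi^\ell\rho$ with $\rho\in\calO_{k'}^\times$ and $\ell:=v_\pi(v_3)\geq 1$, and the swap ensures $m\geq\ell$. Writing $\rho=r^*s$, i.e.\ $r=(\rho/s)^*\in\calO_{k'}^\times$, gives $\mathbf{v}={}^t(\pi^m,s,\pi^\ell r^*s)$.

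The constraint is now automatic: with $c=1/s$,
\begin{equation*}
\mathbf{v}^*j_3\mathbf{v}=s^2+\pi^{m+\ell}s(r+r^*)=s\bigl(s+\pi^{m+\ell}(r+r^*)\bigr),
\end{equation*}
and $c(\mathbf{v}^*j_3\mathbf{v})=-2$ becomes $\pi^{m+\ell}(r+r^*)+s+2=0$, matching \eqref{eq:diag4}. The delicate step is the first normalization: one must exploit the scaling ambiguity of the rank-$1$ decomposition (which is not a $K$-action) to bring the middle coordinate into $\calO_k^\times$, and check via the valuation inequalities that the single rescaling $t=cv_2^*$ already suffices---the asymmetry $v_\pi(v_2)<v_\pi(v_1),v_\pi(v_3)$ from \eqref{eq:cond_abgf} is exactly what makes $cN(v_2)$ come out a unit.
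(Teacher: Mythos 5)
Your proof is correct, and it takes a genuinely different route from the paper's. The paper works entirely at the level of matrix entries: it combines the six scalar equations coming from $xj_3xj_3=1_3$ with the characteristic polynomial, runs a step-by-step valuation analysis (first $v_\pi(c)\geq 0$, then $f,a,b,g\in(\pi)$, then the congruences $c\equiv 1$, $d\equiv -1 \pmod{(\pi)}$, then $v_\pi(a)=2v_\pi(b)>0$, $v_\pi(g)=2v_\pi(f)>0$), reduces $x$ to an explicit six-parameter shape, and only at the very last step, after acting by $\Diag(u^{-1},1,u^*)$, discovers that the result has the rank-one form $j_3+\tfrac1s\mathbf{v}\mathbf{v}^*$. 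You instead establish the rank-one structure $x=j_3+c\,\mathbf{v}\mathbf{v}^*$ a priori: since $(xj_3)^2=1_3$ and $\Phi_{xj_3}(t)=(t-1)^2(t+1)$, the matrix $xj_3$ is semisimple with eigenvalue $1$ of multiplicity two, so $x-j_3=(xj_3-1_3)j_3$ is hermitian of rank one; after that the whole problem is the normalization of the vector $\mathbf{v}$ under $K$ together with the scaling ambiguity of the decomposition, and the constraint $\pi^{m+\ell}(r+r^*)+s+2=0$ drops out of the invariant identity $c(\mathbf{v}^*j_3\mathbf{v})=-2$. Your argument is shorter, explains conceptually why the normal form \eqref{eq:diag4} looks the way it does, and meshes with the paper's own proof of Lemma~\ref{lem:orbit3}, which rests on exactly the equivariance $k\cdot(j_3+\tfrac1s\mathbf{v}\mathbf{v}^*)=j_3+\tfrac1s(k\mathbf{v})(k\mathbf{v})^*$ that you exploit from the start; what the paper's computational route buys is that integrality and the residue conditions on the entries are visible at every stage without invoking the eigenspace argument. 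One slip to correct: since the $(2,2)$ entry of $j_3$ equals $1$, the matching of entries reads $d=1+cN(v_2)$, not $d=cN(v_2)$, so your rescaled middle coordinate $cN(v_2)$ is $d-1$ rather than $d$. This is harmless, because the fact that $cN(v_2)$ is a unit of $\calO_k$ is deduced from $c(\mathbf{v}^*j_3\mathbf{v})=-2$ and the valuation dominance $v_\pi(v_1)+v_\pi(v_3)>2v_\pi(v_2)$, not from any identification with the entry $d$; simply delete the label ``$=d$'' there.
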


\proof
  Without loss of generality, we assume $v_\pi(a)\geq v_\pi(g)$. From \eqref{eq:fund_eqs2} and
\eqref{eq:fund_eqs4}, we have
$aff^*=bb^*g$, which implies
\begin{equation}
  \label{eq:cond_agbf}
  v_\pi(a)-v_\pi(g)=2(v_\pi(b)-v_\pi(f))\geq0.
\end{equation}
We show that $v_\pi(c)\geq0$. If $v_\pi(c)<0$, then
by \eqref{eq:fund_eqs1} and \eqref{eq:cond_abgf} we have
\begin{equation*}
  v_\pi(bf)=2v_\pi(c)\leq -2,
\end{equation*}
and in particular, $v_\pi(f)<0$ by \eqref{eq:cond_agbf}.
Again in \eqref{eq:fund_eqs1}, we obtain
\begin{equation*}
  bf^{-1}+(cf^{-1})^2=f^{-2}-(af^{-1})(gf^{-1})\in(\pi).
\end{equation*}
Then, since $bf^{-1}\in\mathcal{O}_{k'}$, we have $cf^{-1} \in \mathcal{O}_{k'}$. Hence 
$(\pi)\ni(cf^{-1}+c^*f^{-1})gf^{*-1}$, which is equal to $-1$ by \eqref{eq:fund_eqs6}, and we arrive at the contradiction.
Thus  $v_\pi(c) \geq 0$.
Then, by \eqref{eq:cond_abgf} and \eqref{eq:fund_eqs6}, we see $v_\pi(f)<v_\pi(g)\leq2v_\pi(f)$, which implies $f\in(\pi)$ and hence $a, b, g\in(\pi)$.
Thus by \eqref{eq:fund_eqs1} and \eqref{eq:fund_eqs5}, we obtain
\begin{equation*}
  c\equiv \pm 1,\quad d\equiv \pm 1\pmod{(\pi)}
\end{equation*}
and by
\eqref{eq:char_poly}, 
\begin{equation}
\label{eq:cond_cd}
  c\equiv 1,\quad d\equiv -1\pmod{(\pi)}.
\end{equation}
By
\eqref{eq:fund_eqs3} and \eqref{eq:fund_eqs6}, we obtain
\begin{equation*}
  v_\pi(a)=2b_\pi(b)>0,\quad
  v_\pi(g)=2b_\pi(f)>0.
\end{equation*}
Hence we can assume 
  \begin{equation}
\label{eq:diag2}
  x=\begin{pmatrix}
  \pi^{2m}&\pi^m u&c\\
  \pi^m u^*&d&\pi^\ell v\\
  c^*& \pi^\ell v^*& \pi^{2\ell}w
\end{pmatrix},
\qquad
\begin{aligned}
  &m\geq\ell>0,\\
  &u,v,c\in\mathcal{O}_{k'}^\times, \quad
  w,d\in\mathcal{O}_k^\times,\\
  & c\equiv c^*\equiv 1,d\equiv -1\pmod{(\pi)},
\end{aligned}
\end{equation}
and the set of equations \eqref{eq:fund_eqs} becomes
\begin{subequations}
\label{eq:nfund_eqs}
\begin{align}
\label{eq:nfund_eqs1}
  &\pi^{2(m+\ell)}w+\pi^{m+\ell}uv+c^2=1,\\
\label{eq:nfund_eqs2}
  &c+d=-\pi^{m+\ell}u^{-1}v^*,\\
\label{eq:nfund_eqs3}
  &c+c^*=-uu^*,\\
\label{eq:nfund_eqs4}
  &c+d=-\pi^{m+\ell}u^*v^{-1}w,\\
\label{eq:nfund_eqs5}
  &\pi^{m+\ell}(uv+u^*v^*)+d^2=1,\\
\label{eq:nfund_eqs6}
  &c+c^*=-vv^*w^{-1}.
\end{align}
\end{subequations}
By \eqref{eq:nfund_eqs3} and \eqref{eq:nfund_eqs6}, we see $w = (uu^*)^{-1}vv^*$. By \eqref{eq:char_poly} we see
\begin{eqnarray} \label{eq:c-d}
c+c^*+d=1. 
\end{eqnarray}
Then, we see $c = 1+\pi^{m+\ell}u^{* -1}v$ by \eqref{eq:nfund_eqs2}, and $d = 1+uu^*$ by \eqref{eq:nfund_eqs3}.
Setting $r = u^{-1 *}v \in \calO_{k'}^\times$ and $s = uu^* \in \calO_k^\times$, we have  $\pi^{m+\ell}(r+r^*) + s + 2 = 0$ by \eqref{eq:nfund_eqs1} and 
\begin{eqnarray}
    \Diag(u^{-1},1,u^*)\cdot x
&=&
    \begin{pmatrix}
      \pi^{2m}(uu^*)^{-1} & \pi^m & c \\
      \pi^m & d &\pi^\ell uv\\
      c^* & \pi^\ell u^*v^*& \pi^{2\ell}uu^*w
    \end{pmatrix}   \nonumber
    \\ 
    &=&
j_3 +     \begin{pmatrix}
      \pi^{2m}s^{-1}&\pi^m & \pi^{m+\ell}r \\
      \pi^m & s &\pi^\ell rs \\
      \pi^{m+\ell}r^*& \pi^\ell r^*s & \pi^{2\ell} rr^*s
    \end{pmatrix} \nonumber\\
&=&
j_3 + \frac{1}{s} {\bf v}{\bf v}, \quad {\bf v } = \begin{pmatrix} \pi^m \\s \\ \pi^\ell r^*s \end{pmatrix}, 
\end{eqnarray}
which completes the proof.
\qed

\bigskip
It is easy to check the matrix in \eqref{eq:diag4} is an element of $X \cap K$, i.e. it is integral and the entries satisfies the condition \eqref{eq:fund_eqs}.
The following statement finally shows Theorem~\ref{thm: Cartan} in the case $n=1$.

\begin{lem}
\label{lem:orbit3}
  The $K$-orbit containing the matrix $j_3 + \frac{1}{s}{\bf v}{\bf v}^*$ in \eqref{eq:diag4} contains $1_3$. 
\end{lem}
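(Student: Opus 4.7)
The plan is to rewrite the target $1_3$ in a form parallel to the structure of $x$. First I would set ${\bf u}={}^t(1,0,-1)\in\calO_{k'}^3$ and verify directly that $1_3=j_3+{\bf u}{\bf u}^*$, and more generally that
\[
1_3 \;=\; j_3+\frac{1}{s}(c{\bf u})(c{\bf u})^*
\]
for any $c\in\calO_{k'}^\times$ with $N(c)=s$. Such a $c$ exists since $N:\calO_{k'}^\times\to\calO_k^\times$ is surjective in the unramified case with odd residual characteristic. Next I would use the relation $\pi^{m+\ell}(r+r^*)+s+2=0$ to compute
\[
{\bf v}^*j_3{\bf v} \;=\; s\bigl(s+\pi^{m+\ell}(r+r^*)\bigr) \;=\; -2s \;=\; (c{\bf u})^*j_3(c{\bf u}),
\]
and note that both ${\bf v}$ and $c{\bf u}$ are primitive in $\calO_{k'}^3$: the middle entry $s$ of ${\bf v}$ is a unit, and $c$ is a unit.

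My strategy is then to produce $g\in K$ with $g{\bf v}=c{\bf u}$. Once this is achieved, the identity $gj_3g^*=j_3$ (from $g\in U(j_3)$) yields
\[
g\cdot x \;=\; gj_3g^* + \frac{1}{s}(g{\bf v})(g{\bf v})^* \;=\; j_3+\frac{N(c)}{s}\,{\bf u}{\bf u}^* \;=\; j_3+{\bf u}{\bf u}^* \;=\; 1_3,
\]
which is the desired conclusion.

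To produce such a $g$ I would appeal to an integral Witt-type extension theorem for $U(j_3)$ over $\calO_{k'}$. Since the common $j_3$-value $-2s$ is a unit, the lattice $\calO_{k'}^3$ admits the orthogonal decomposition $\calO_{k'}^3=\calO_{k'}{\bf v}\oplus{\bf v}^\perp$ (integrality of the projection follows from dividing by the unit $-2s$), and similarly $\calO_{k'}^3=\calO_{k'}(c{\bf u})\oplus(c{\bf u})^\perp$. The one-dimensional summands are isometric because their form values agree. A direct Gram-matrix computation, using $\det x=1$ and $\det 1_3=1$, shows that the rank-$2$ complements are unimodular hermitian lattices whose determinants differ by the unit factor $s\in\calO_k^\times=N(\calO_{k'}^\times)$, so they are isometric by the classification of hermitian lattices over unramified quadratic extensions of odd residual characteristic.

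The hard part will be justifying this last Witt-type extension in a style matching the hands-on nature of the preceding lemmas. As a direct alternative, I would try to construct $g$ explicitly as a product of elementary elements of $K_{1,1}$ and $K_{1,2}$ from Lemma~\ref{explicit K1}, iteratively clearing entries of ${\bf v}$ while respecting the quadratic constraint $\lambda\bar\lambda+\mu+\bar\mu=0$ on each triangular factor and preserving integrality at each step. The iteration converges because each step strictly raises the $\pi$-adic valuation of a chosen entry, but the careful bookkeeping required to keep every intermediate matrix in $\calO_{k'}$ and unitary is the main delicate point.
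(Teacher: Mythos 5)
Your argument is correct, and it is a genuinely different proof from the one in the paper. The paper's proof is computational: it constructs an explicit $k\in K$ of the form \eqref{eq: the form}, choosing $\gamma$, $b$, $c$, $d$, $f$ (subject to $bb^*+c+c^*=dd^*+f+f^*=0$) so that $(k{\bf v})_2=0$; then $k\cdot x$ has the bordered shape $\left(\begin{smallmatrix} *&0&*\\ 0&1&0\\ *&0&*\end{smallmatrix}\right)$, and Lemma~\ref{lem:orbit1} diagonalizes it to $1_3$. You instead write $1_3=j_3+\frac{1}{s}(c{\bf u})(c{\bf u})^*$ with ${\bf u}={}^t(1,0,-1)$ and $N(c)=s$, so that everything reduces to finding $g\in K$ with $g{\bf v}=c{\bf u}$, and you obtain $g$ from lattice theory: both vectors have the same $j_3$-value $-2s$, which is a unit (this is where odd residual characteristic and the relation $\pi^{m+\ell}(r+r^*)+s+2=0$ enter), so each spans a unimodular rank-one sublattice that splits off orthogonally, and the rank-two complements are unimodular, hence isometric. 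That last fact needs no separate ``integral Witt extension theorem'': it is exactly the Jacobowitz classification that the paper itself recalls in \eqref{herm Cartan}, since for an integral hermitian Gram matrix $H=g\pi^\mu g^*$ with $g\in GL_2(\calO_{k'})$, integrality forces $\mu\geq{\bf 0}$ and unit determinant forces $\mu={\bf 0}$, i.e.\ every unimodular hermitian $\calO_{k'}$-lattice of rank $2$ is isometric to $1_2$. So the worry in your final paragraph, and the fallback iterative construction sketched there, are unnecessary; the only slip is the assertion that the two complements' determinants ``differ by the unit factor $s$'' (both are in fact $(2s)^{-1}$ modulo norms), which is harmless anyway because $N(\calO_{k'}^\times)=\calO_k^\times$ in the unramified case. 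In exchange for invoking the classification of hermitian lattices, your proof is shorter, makes the hypothesis $\pi^{m+\ell}(r+r^*)+s+2=0$ conceptually transparent (it says precisely $h({\bf v},{\bf v})=-2s$), and bypasses the reduction to Lemma~\ref{lem:orbit1}; the paper's proof is longer but fully explicit and self-contained within the parametrization of $K_1$ in Lemma~\ref{explicit K1}.
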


\begin{proof}
For any $k \in K$, we have
 \begin{equation*}
k \cdot (j_3 + \frac{1}{s}{\bf v}{\bf v}^*) = j_3 +  \frac{1}{s}(k{\bf v})(k{\bf v})^*.
\end{equation*}
If the second entry $(k{\bf v})_2$ of $k{\bf v}$ becomes $0$, we see 
\begin{eqnarray*}
k \cdot (j_3 + \frac{1}{s}{\bf v}{\bf v}^*) = \begin{pmatrix} * & 0 & *\\ 0 & 1 & 0\\ * & 0 & * \end{pmatrix},
\end{eqnarray*}
which satisfies 
the assumption of Lemma~\ref{lem:orbit1}, 
hence it is diagonalized to $1_3$.

In the following we will show that there exists $k \in K$ for which $(k{\bf v})_2 = 0$ and of the form 
\begin{equation}
  k=
  \begin{pmatrix}
    1 & -b^* & c \\
     & 1 & b\\
     &  & 1
  \end{pmatrix}
  \begin{pmatrix}
     &  &1 \\
     & 1 &-d^*\\
    1 & d& f
  \end{pmatrix}
=
\begin{pmatrix}
  * & * & * \\
  b & 1+bd & bf-d^*\\
 * & * & *
\end{pmatrix}. \label{eq: the form}
\end{equation}
For the matrix of the form \eqref{eq: the form}, $k \in K$ if and only if $b,c,d,f\in\mathcal{O}_{k'}$ and $bb^*+c+c^*=dd^*+f+f^*=0$.
For simplicity, we put $y=s^{-1}\pi^{m-\ell}(r+r^*)\in\mathcal{O}_k^\times$.
We define a unit $\gamma\in\mathcal{O}_{k}^\times$ by
\begin{equation}
  \label{eq:def_gamma}
  \gamma=\frac{-\rho rr^*}{2\rho+y\rho^2-\pi^{2\ell}},
\end{equation}
where
\begin{eqnarray*}
\rho = \left\{\begin{array}{ll}
1 & \mbox{if } v_\pi(y) > 0\\
-y^{-1} & \mbox{if } v_\pi(y) = 0
\end{array} \right. , \quad
2\rho + y\rho^2-\pi^{2\ell} \in \calO_k^\times.
\end{eqnarray*}
Since
$\rho\gamma\in\mathcal{O}_{k}^\times$, we can take $b\in\mathcal{O}_{k'}^\times$
such as 
\begin{equation}
  \label{eq:def_b}
  bb^*=\rho\gamma.
\end{equation}
Further we put
\begin{align*}
  c&=-bb^*/2,\\
  d&=b^{-1}(-1+\pi^{\ell}(b^*r)^{-1}\gamma),\\
  f&=-\pi^{m-\ell}r^{*-1}s^{-1}-\pi^{-\ell}(br^*)^{-1}(1+bd)+b^{-1}d^*.
\end{align*}
We see that
$b,c,d, f\in\mathcal{O}_{k'}$, where we use the relation
\begin{eqnarray} \label{eq:1+d}
1+bd=\pi^{\ell}(b^*r)^{-1}\gamma.
\end{eqnarray}
It is trivial that $bb^*+c+c^*=0$. We show that $dd^*+f+f^*=0$.
We have
\begin{equation}
\label{eq:rrbbdd}
  \begin{split}
    rr^*bb^*dd^*
    &=rr^*(-1+(b^*r)^{-1}\gamma\pi^{\ell})(-1+(br^*)^{-1}\gamma\pi^{\ell})
    \\
    &=rr^*-(b^{*-1}r^*+b^{-1}r)\gamma\pi^{\ell}+(bb^*)^{-1}\gamma^2\pi^{2\ell}.
  \end{split}
\end{equation}
On the other hand, using \eqref{eq:1+d}, we obtain
\begin{equation}
\label{eq:rrbbff}
  \begin{split}
    rr^*bb^*(f+f^*)&=-s^{-1}\pi^{m-\ell}bb^*r-b^*(1+bd)\pi^{-\ell}r+b^*d^*rr^*\\
    &\qquad-s^{-1}\pi^{m-\ell}bb^*r^*-b(1+b^*d^*)\pi^{-\ell}r^*+bdrr^*\\
    &=-s^{-1}\pi^{m-\ell}bb^*(r+r^*)+(bd+b^*d^*)rr^*-2\gamma\\
    &=-ybb^*+((b^*r)^{-1}\gamma\pi^{\ell}+(br^*)^{-1}\gamma\pi^{\ell}-2)rr^*-2\gamma\\
    &=-ybb^*+(b^{*-1}r^*+b^{-1}r)\gamma\pi^{\ell}-2rr^*-2\gamma.
  \end{split}
\end{equation}
Combining 
\eqref{eq:def_gamma}, \eqref{eq:def_b},
\eqref{eq:rrbbdd} and \eqref{eq:rrbbff}, we arrive at
\begin{equation*}
  \begin{split}
    rr^*bb^*(dd^*+f+f^*)
    &=(bb^*)^{-1}\gamma^2\pi^{2\ell}-ybb^*-rr^*-2\gamma
    \\
    &=(\rho\gamma)^{-1}\gamma^2\pi^{2\ell}-y\rho\gamma-rr^*-2\gamma
    \\
    &=-\rho^{-1}\gamma(y\rho^2+rr^*\rho\gamma^{-1}+2\rho-\pi^{2\ell})
    \\
    &=0.
  \end{split}
\end{equation*}
Thus we have shown that $dd^*+f+f^*=0$, and hence  $k \in K$.

By definition of $f$,
we obtain
\begin{equation*}
  bf-d^*=-\pi^{m-\ell}bs^{-1}r^{*-1}-\pi^{-\ell}(1+bd)r^{*-1},
\end{equation*}
which implies
\begin{equation}
  \begin{split}
    (k{\bf v})_2&=    
    \begin{pmatrix}
      b & 1+bd & bf-d^*
    \end{pmatrix}
    \begin{pmatrix}
      \pi^m \\ s \\ s\pi^{\ell}r^*
    \end{pmatrix}\\
    &=b\pi^m+(1+bd)s
    -b\pi^m-(1+bd)s
    \\
    &=0.
  \end{split}
\end{equation}
\end{proof}

\bigskip
\noindent
{\bf 1.3.}
In the following we will show Theorem~\ref{thm: Cartan} in the case $n\geq 2$.
Our strategy is the same as in \cite{HK}. We often use elements in $B \cap K$, $j_{2n+1}Bj_{2n+1} \cap K$ or 
$$
\set{\begin{pmatrix} A &&\\&u&\\&&j_nA^{* -1}j_n \end{pmatrix}}{A \in GL_n(\calO_{k'}), u \in \calO_{k'}^1} (\subset K).
$$
For $a = (a_{ij}) \in M_{2n+1}(k')$, we set
\begin{eqnarray*}
-\ell(a) = \min\set{v_\pi(a_{ij})}{1 \leq i, j \leq 2n+1},
\end{eqnarray*}
and say an entry of $a$ to be {\it minimal} if its $v_\pi$-value is $-\ell(a)$.
For $g \in G$, we see $\ell(g) \geq 0$, since $v_\pi(\det(g)) = 0$.

\begin{lem}\label{lem:orbit4}
  Let $n\geq2$ and assume that $x\in X_n$ has
a minimal entry in the diagonal except the $(n+1,n+1)$-entry.
Then $K \cdot x$ contains a hermitian matrix of the type
$$
\left( \begin{array}{c|c|c}
\pi^\ell & 0 & 0\\ 
\hline
0 & y & 0\\  
\hline
0 & 0 & \pi^{-\ell}
\end{array} \right), \qquad y \in X_{n-1} \cap M_{2n-1}(\pi^{-\ell}\mathcal{O}_{k'}), 
$$
where $\ell = \ell(x)$. 
\end{lem}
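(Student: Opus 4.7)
The plan is to mimic the strategy used for $n=1$ in Lemmas 1.2--1.4: transport the minimal diagonal entry to the corner $(2n+1,2n+1)$ by a Weyl group element of $K$, clear the last row and column by a single upper-triangular unipotent in $G\cap K$, and rescale. The Weyl group of $G=U(j_{2n+1})$ relative to the diagonal torus consists of permutations $\sigma\in S_{2n+1}$ with $\sigma(2n+2-i)=2n+2-\sigma(i)$, and each such permutation matrix lies in $K$; a suitable one carries a minimal diagonal entry at position $(j,j)$ with $j\neq n+1$ to position $(2n+1,2n+1)$. So I may assume $g:=x_{2n+1,2n+1}$ has $v_\pi(g)=-\ell$, and write $x$ in blocks of sizes $(1,2n-1,1)$ as
$$
x=\begin{pmatrix} a & u^* & c\\ u & Y & w\\ c^* & w^* & g\end{pmatrix}.
$$

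For the second step I look for an upper-triangular unipotent
$$
h=\begin{pmatrix} 1 & p^* & q^*\\ 0 & 1_{2n-1} & r^*\\ 0 & 0 & 1\end{pmatrix},
$$
and a direct check shows $h\in G$ if and only if $r=-p^*j_{2n-1}$ and $q+q^*+p^*j_{2n-1}p=0$. Taking $p=j_{2n-1}w/g$ together with $q^*=-c/g-w^*j_{2n-1}w/g^2$ kills the last column of $hxh^*$ off the corner, and the minimality of $g$, which gives $w/g\in\calO_{k'}^{2n-1}$ and $c/g\in\calO_{k'}$, makes $p,q,r$ integral so that $h\in K$. The compatibility condition $q+q^*+p^*j_{2n-1}p=0$ then reduces to $g(c+c^*)+w^*j_{2n-1}w=0$, which is exactly the $(3,1)$-block of the identity $(xj_{2n+1})^2=1$; recognizing this identification is the main obstacle, but once it is seen the condition holds automatically.

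After this step $hxh^*$ takes the form
$$
hxh^*=\begin{pmatrix} a' & u'^* & 0\\ u' & Y' & 0\\ 0 & 0 & g\end{pmatrix},
$$
and squaring $(hxh^*)j_{2n+1}$ yields $a'g=1$ from the $(1,1)$-block and $u'g=0$ from the $(2,1)$-block; since $g\neq 0$, $u'$ vanishes and the matrix is already block-diagonal. The characteristic polynomial of $(hxh^*)j_{2n+1}$ then factors as $(t^2-1)\,\Phi_{Y'j_{2n-1}}(t)$, and comparison with $(t^2-1)^n(t-1)$ gives $Y'\in X_{n-1}$. A final diagonal rescaling by $\Diag(\alpha,1_{2n-1},1/\ol{\alpha})\in K$ with $|\alpha|^2$ equal to the unit part of $g$, which exists because $k'/k$ is unramified and hence $N(\calO_{k'}^\times)=\calO_k^\times$, sends $g\mapsto\pi^{-\ell}$ and via $a'g=1$ sends $a'\mapsto\pi^\ell$. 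The lattice condition $Y'\in M_{2n-1}(\pi^{-\ell}\calO_{k'})$ follows because the $K$-action preserves $\pi^{-\ell}M_{2n+1}(\calO_{k'})$, within which $x$ lies by definition of $\ell=\ell(x)$.
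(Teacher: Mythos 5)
Your proof is correct and takes essentially the same approach as the paper's: move the minimal diagonal entry to the $(2n+1,2n+1)$-corner by the Weyl group, clear the last column with a unipotent element of $K$ (whose integrality comes from the minimality of $g$), deduce the remaining vanishing and $a'g=1$ from $(xj_{2n+1})^2=1_{2n+1}$, and finish by rescaling via surjectivity of the norm $N:\calO_{k'}^\times\to\calO_k^\times$. The only difference is bookkeeping: the paper first clears the middle entries of the last column and removes the leftover $(1,2n+1)$-entry using $b+b^*=0$, whereas you clear the whole column at once after verifying the compatibility condition $q+q^*+p^*j_{2n-1}p=0$ directly from the relation $(xj_{2n+1})^2=1_{2n+1}$.
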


\begin{proof}
By the action of $W$, we may assume the $(2n+1, 2n+1)$-entry is minimal.
Then, it is easy to see that $K\cdot x$ contains an element $x'$ as follows
$$
x' = 
\left( \begin{array}{c|ccc|c}
a & a_2 & \cdots & a_{2n} & b\\
\hline
a_2^* &  &  &  & 0\\
\vdots &  & y &  & \vdots\\
a_{2n}^* &  &  &  & 0\\
\hline
b^* & 0 & \cdots & 0 & \pi^{-\ell} \end{array} \right). 
$$
By the relation $x'j_{2n+1}x'j_{2n+1} = 1_{2n+1}$ we see $b + b^* = 0$, hence we may assume $b = 0$ by the action of $K \cap B$. Then,
still by the same relation, we have $a =\pi^\ell$ and $a_i = 0, \; i \geq 2$. At that time $y=y^* \in M_{2n-1}(\pi^{-\ell}\calO_{k'})$ and
\begin{equation*}
  (t^2-1)^n(t-1)=\Phi_{xj_{2n+1}}(t)=(t^2-1)\Phi_{yj_{2n-1}},
\end{equation*}
which completes the proof.
\end{proof}

\bigskip
The following two lemmas can be shown in the same way as in \cite[Lemma 1.4 and 1.5]{HK}, so we omit the proof.
\begin{lem}   \label{lem:orbit5}
Let $n \geq 2$ and assume that $x \in X_n$ has a minimal entry outside of the diagonal, the anti-diagonal, the $(n+1)$th row, and the $(n+1)$th column. Then $K \cdot x$ contains a hermitian matrix of type
$$
\left( \begin{array}{c|c|c}
\begin{array}{cc}
\pi^{\ell}&0\\ 0 &\pi^{\ell}  
\end{array}
 & 0 & 0\\
\hline 
0 & y & 0\\
\hline  
0 & 0 & 
\begin{array}{cc}
\pi^{-\ell}&0\\ 0 &\pi^{-\ell}  
\end{array}
\end{array}\right), \qquad y \in X_{n-2} \cap M_{2n-3}(\pi^{-\ell}\mathcal{O}_{k'}), 
$$
where $\ell = \ell(x)$. 
\end{lem}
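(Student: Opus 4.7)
The plan is to follow the strategy of Lemma~\ref{lem:orbit4} and of the analogous Lemma~1.5 of \cite{HK} for the even-size case, taking care that the $K$-operations used in the reduction never disturb the middle row and column (indices $n+1$), as required by the target block shape. Since $x$ is hermitian, a minimal off-diagonal entry $x_{ij}$ is automatically paired with $x_{ji}$. Using the Weyl group embedded in $K$ as (signed) permutation matrices compatible with $j_{2n+1}$, I would move this pair to the positions $(1,2n)$ and $(2n,1)$, both of which are off the anti-diagonal and off the middle row/column. A normalization by a diagonal element $\Diag(u,1,\ldots,1,u^{*-1}) \in K$ with $u \in \calO_{k'}^\times$ then makes $x_{1,2n} = \pi^{-\ell}$ with $\ell = \ell(x)$.

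Next, I would use suitable unipotent elements of $K \cap B$ and $K \cap j_{2n+1}Bj_{2n+1}$ to eliminate every other entry in the first row and in the $2n$-th column (and hence, by hermitianness, in the first column and the $2n$-th row as well), except the entries at $(1,2n)$ and $(2n,1)$. The minimality of $x_{1,2n}$ ensures the multipliers involved are integral and the operations remain in $K$. The defining relation $xj_{2n+1}xj_{2n+1} = 1_{2n+1}$ together with $\Phi_{xj_{2n+1}}(t)=(t^2-1)^n(t-1)$ then forces $x_{1,1} = \pi^\ell$, $x_{2n,2n} = \pi^{-\ell}$, and the entries $x_{1,2n+1}$, $x_{2,2n}$ (and their hermitian/anti-diagonal conjugates) to vanish, yielding a first hyperbolic corner block. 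A symmetric second round of the same procedure, now targeting indices $2$ and $2n+1$, extracts the second corner block. Finally, the middle $(2n-3)\times(2n-3)$ block $y$ is hermitian, and by comparing $\Phi_{xj_{2n+1}}(t) = (t^2-1)^2\, \Phi_{yj_{2n-3}}(t)$ with the prescribed characteristic polynomial one obtains $y \in X_{n-2}$; the condition $y \in M_{2n-3}(\pi^{-\ell}\calO_{k'})$ is immediate from $\ell(x)=\ell$.

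The main obstacle is the extra bookkeeping due to the middle row and column, absent in the even-size case: one must verify that the cleanup operations above do not produce stray entries in positions of the form $(1,n+1)$, $(n+1,1)$, $(2n,n+1)$, $(n+1,2n)$, etc., and that the second round applied to the indices $(2,2n+1)$ neither undoes the first round's effect nor intrudes into the protected $(n+1)$-th row and column. This reduces to a careful case analysis using the general-$n$ analogues of the scalar identities derived from $xj_{2n+1}xj_{2n+1} = 1_{2n+1}$ and the characteristic polynomial condition, structurally parallel to the argument of \cite[Lemma~1.5]{HK}, together with an application of the normalization $N(\calO_{k'}^\times) = \calO_k^\times$ at the end to absorb unit factors.
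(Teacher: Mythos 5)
Your reduction breaks down at its decisive step, and the breakdown is not repairable by bookkeeping. Suppose, as you propose, the minimal entry has been moved to $(1,2n)$, normalized to $x_{1,2n}=\pi^{-\ell}$, and every other entry of row $1$ and column $2n$ (hence of column $1$ and row $2n$) has been eliminated. Then in particular $x_{1,1}=x_{2n,2n}=0$, so the relation $xj_{2n+1}xj_{2n+1}=1_{2n+1}$ cannot afterwards ``force $x_{1,1}=\pi^{\ell}$, $x_{2n,2n}=\pi^{-\ell}$'': the relation is a constraint the transformed matrix already satisfies, not an operation that changes entries, and your two claims (full clearing, then nonzero forced diagonal values) contradict each other. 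What the relation actually forces is computed at once from $xj_{2n+1}x=j_{2n+1}$: since row $1$ equals $\pi^{-\ell}e_{2n}$ and row $2n$ equals $\pi^{-\ell}e_{1}$, one gets $x_{2,k}=0$ for $k\neq 2n+1$, $x_{2,2n+1}=\pi^{\ell}$, and likewise for row/column $2n+1$. Thus your procedure terminates at $x=h\oplus y$, where $h$ sits on rows and columns $1,2,2n,2n+1$ and equals
\[
h=\begin{pmatrix}0&0&\pi^{-\ell}&0\\0&0&0&\pi^{\ell}\\ \pi^{-\ell}&0&0&0\\ 0&\pi^{\ell}&0&0\end{pmatrix},
\]
a \emph{hyperbolic} corner block with zero diagonal, not the block $\Diag(\pi^{\ell},\pi^{\ell})\oplus\Diag(\pi^{-\ell},\pi^{-\ell})$ asserted in the lemma; your ``second round targeting indices $2$ and $2n+1$'' is vacuous because those rows and columns are already completely determined. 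The missing step --- that $h$ lies in the $U(j_4)(\mathcal{O}_{k'})$-orbit of $\Diag(\pi^{\ell},\pi^{\ell},\pi^{-\ell},\pi^{-\ell})$ --- is exactly an instance of the even-size Cartan decomposition of \cite{HK} (size $4$), or else requires an explicit construction solving norm equations of the kind carried out in Lemmas \ref{lem:orbit1}--\ref{lem:orbit3} for $n=1$; it is the real content of the lemma and is entirely absent from your sketch. The natural repair, consistent with the paper's reliance on \cite{HK} and on diagonalizability of unramified hermitian matrices (\ref{herm Cartan}), is to proceed the other way around: first diagonalize the $2\times 2$ hermitian block containing the minimal entry by some $A\in GL_2(\mathcal{O}_{k'})$ embedded in a Levi of $K$, $A\mapsto\Diag(A,1_{2n-3},j_2A^{*-1}j_2)$, so that the minimal entries land \emph{on the diagonal}, and then run the extraction of Lemma~\ref{lem:orbit4}.

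There is a second, related gap. Your blanket claim that minimality of the pivot makes all clearing multipliers integral fails for the diagonal entry: killing $x_{1,1}$ with the pivot $x_{1,2n}$ requires solving $\lambda+\lambda^{*}+N(\lambda)\,\pi^{\ell}x_{2n,2n}=-\pi^{\ell}x_{1,1}$ in $\mathcal{O}_{k'}$, a norm/Hensel condition, not a division; it is solvable when no diagonal entry of $x$ is minimal, but can fail otherwise. This is not pedantry: the lemma is used in the proof of Theorem~\ref{thm: Cartan} only in the case where Lemma~\ref{lem:orbit4} does not apply, and without that standing assumption the literal statement is false. Indeed, for $\lambda\in\mathcal{O}_{k'}^{\times}$ and $k=1+\lambda E_{12}-\lambda^{*}E_{2n,2n+1}\in K$, the matrix $k\cdot x_{(\ell,0,\ldots,0)}$ has a minimal entry at $(2n,2n+1)$, which lies outside the diagonal, the anti-diagonal and the $(n+1)$th row and column, yet its $K$-orbit contains no matrix of the asserted shape, since $K\cdot x_{(\ell,0,\ldots,0)}\cap K\cdot x_{(\ell,\ell,0,\ldots,0)}=\emptyset$ by Theorem~\ref{thm: Cartan}. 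Any complete proof must therefore locate and use the hypothesis that minimal entries avoid the diagonal (that is the case in which the $2\times2$ block has determinant of valuation exactly $-2\ell$, which is what makes both extracted diagonal entries equal to $\pi^{\pm\ell}$); your proposal never invokes it.
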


\begin{lem}   \label{lem:orbit6}
Let $x \in X_n$ with $n \geq 2$. Assume that any minimal entry of $x \in X_n$ stands in the anti-diagonal except the $(n+1,n+1)$-entry but 
some entries of the anti-diagonal except the $(n+1,n+1)$-entry are not minimal. 
Then $K \cdot x$ contains a hermitian matrix of the same type as in Lemma~\ref{lem:orbit5}.
\end{lem}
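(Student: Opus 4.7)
The approach is to apply a single carefully chosen element of $K$ so as to transfer minimality from an anti-diagonal entry to an entry off the anti-diagonal, after which Lemma~\ref{lem:orbit5} finishes the reduction. To set up, let $\ell=\ell(x)$. The hypothesis provides an anti-diagonal position $(i_0,2n+2-i_0)$, $i_0\neq n+1$, at which $v_\pi=-\ell$, together with an anti-diagonal position $(j_0,2n+2-j_0)$, $j_0\neq n+1$, at which $v_\pi>-\ell$. Because $x$ is Hermitian, the valuations at $(i,2n+2-i)$ and $(2n+2-i,i)$ agree, so by the Weyl action (permutations of the first $n$ coordinates combined with conjugation by $j_{2n+1}$) I may assume $i_0,j_0\in\{1,\dots,n\}$; the two indices are then automatically distinct.

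The next step is to take a unit $c\in\calO_{k'}^\times$, form $A=I_n+cE_{i_0,j_0}\in GL_n(\calO_{k'})$, and set $g=\Diag(A,1,j_nA^{-*}j_n)$. A direct block calculation gives $gj_{2n+1}g^{*}=j_{2n+1}$, so $g\in K$. Because $g$ is block diagonal, $g\cdot x=gxg^{*}$ changes $x$ only through the upper-right $n\times n$ block $U=x_{13}$, which is replaced by $AUj_nA^{-1}j_n$. Tracking the $(i_0,n+1-j_0)$-entry of this new block, equivalently the $(i_0,2n+2-j_0)$-entry of $g\cdot x$, gives
\[
(g\cdot x)_{i_0,\,2n+2-j_0}
=x_{i_0,2n+2-j_0}+c\,x_{j_0,2n+2-j_0}-c\,x_{i_0,2n+2-i_0}-c^{2}\,x_{j_0,2n+2-i_0}.
\]

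Among the four terms on the right, only $-c\,x_{i_0,2n+2-i_0}$ has $v_\pi=-\ell$: the first and fourth sit at positions $(i_0,2n+2-j_0)$ and $(j_0,2n+2-i_0)$, which are off the anti-diagonal since $i_0\neq j_0$ and hence have $v_\pi>-\ell$ by hypothesis, while the second is anti-diagonal but non-minimal by the choice of $j_0$. So the displayed entry is minimal, and its position clearly lies outside the diagonal, the anti-diagonal, the $(n+1)$-th row and the $(n+1)$-th column. Thus $g\cdot x$ satisfies the hypothesis of Lemma~\ref{lem:orbit5}, and invoking that lemma produces a representative of the required form. The only real obstacle is the bookkeeping: one must verify that $g$ stays in $K$ (which reduces to $A\in GL_n(\calO_{k'})$ plus the identity $gj_{2n+1}g^{*}=j_{2n+1}$) and that no cancellation at valuation $-\ell$ occurs in the displayed sum, the latter being automatic because the only competing term, $x_{i_0,2n+2-j_0}$, is forced to have $v_\pi>-\ell$ by the lemma's hypothesis.
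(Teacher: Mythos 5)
Your proof is correct and takes essentially the approach the paper intends: the paper omits this proof, deferring to the even-size case \cite[Lemmas 1.4, 1.5]{HK}, and your argument — applying the block-diagonal element $g=\Diag(A,1,j_nA^{*-1}j_n)\in K$ with $A=1_n+cE_{i_0j_0}$ so that the minimal anti-diagonal entry at $(i_0,2n+2-i_0)$ spills, without cancellation, into the off-anti-diagonal position $(i_0,2n+2-j_0)$, after which Lemma~\ref{lem:orbit5} applies — is exactly that method adapted to odd size. One phrasing slip only: conjugation by $g$ changes every block of $x$, not just the upper-right one, but the formula you actually use (the new upper-right block equals $AUj_nA^{-1}j_n$, and hence the displayed four-term expansion of the $(i_0,2n+2-j_0)$-entry) is correct, and since $g,g^{-1}\in K$ preserve $\ell(x)$, the exhibited entry is indeed minimal in $g\cdot x$.
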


\begin{lem}
\label{lem:orbit7}
  Let $x \in X_n$ with $n \geq 2$. Assume that any minimal entry of $x \in X_n$ stands in the anti-diagonal or, the $(n+1)$th row or, the $(n+1)$th column
and that no other entries are minimal. 
Then $K \cdot x$ contains a hermitian matrix of the type 
\begin{equation*}
\left( \begin{array}{l|c|l}
  1_2 & 0 & 0\\
  \hline
  0 & y & 0\\
  \hline
  0 & 0 & 1_2
\end{array} \right) \in K, \quad y \in X_{n-2} \cap M_{2n-3}(\mathcal{O}_{k'}).
\end{equation*}

\end{lem}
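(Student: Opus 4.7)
The plan is to mirror the two-step strategy used for $n=1$ by Lemmas~\ref{lem:orbit2} and \ref{lem:orbit3}. Under the hypothesis, the minimal entries of $x$ are confined to the anti-diagonal, the $(n+1)$-th row, and the $(n+1)$-th column, while every off-centre diagonal entry has $v_\pi$ strictly larger than $-\ell(x)$. This is exactly the higher-rank analogue of the hypothesis \eqref{eq:cond_abgf} appearing in the proof of Lemma~\ref{lem:orbit2}.

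The first step is to force integrality. Combining $x^*=x$, $xj_{2n+1}xj_{2n+1}=1_{2n+1}$, and the characteristic polynomial identity from \eqref{space X}, I would apply the valuation bookkeeping that led from \eqref{eq:cond_abgf} to \eqref{eq:diag2} in the proof of Lemma~\ref{lem:orbit2}, running it pair-by-pair on each corner minor at rows/columns $\{i,n+1,2n+2-i\}$ for $1 \le i \le n$. Each such minor satisfies a local system analogous to \eqref{eq:nfund_eqs}, which forces the exponents on the pair to balance and altogether pins down $\ell(x)=0$, so that $x\in K\cap X_n$.

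The second step is to peel off the outer rows and columns two at a time. I would start with the corner pair $(1,2n+1)$: after clearing the extraneous entries of row $1$ and column $2n+1$ by suitable triangular $K$-elements, the $3\times 3$ submatrix at positions $(1,n+1,2n+1)$ takes the rank-one perturbation form $j_3 + s^{-1}{\bf v}{\bf v}^*$ of \eqref{eq:diag4}, and the $K$-element constructed in the proof of Lemma~\ref{lem:orbit3} kills the middle component of $\bf v$ and reduces this sub-block to $1_3$ — crucially, leaving the $(n+1,n+1)$-entry equal to $1$ and not disturbing the interior. Iterating the same procedure on the pair $(2,2n)$ with the same central pivot yields $\Diag(1_2, y, 1_2)$; the characteristic polynomial then restricts to $\Phi_{yj_{2n-3}}(t)=(t^2-1)^{n-2}(t-1)$, placing $y \in X_{n-2}$, while integrality of $y$ is inherited through integral $K$-operations.

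The main obstacle will be the interaction between successive peeling steps: after the first reduction on $(1,2n+1)$, one must verify that the remaining $(2n-1)\times(2n-1)$ interior still has its minimal-entry pattern confined to the anti-diagonal/central cross, so that the second peeling on $(2,2n)$ is admissible under the same hypothesis. This stability is a tracking of $v_\pi$-valuations through the triangular $K$-multiplications that use anti-diagonal pivots — a bookkeeping exercise in the spirit of the closing computation of Lemma~\ref{lem:orbit3}, but with more indices to monitor.
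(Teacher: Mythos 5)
Your strategy diverges from the paper's proof at every essential point, and the gaps are fatal rather than cosmetic. First, the claim that each $3\times3$ minor at rows/columns $\{i,n+1,2n+2-i\}$ ``satisfies a local system analogous to \eqref{eq:nfund_eqs}'' is false for $n\geq 2$: the relation $xj_{2n+1}xj_{2n+1}=1_{2n+1}$ couples every such minor to entries outside it, so the system \eqref{eq:fund_eqs} has no self-contained analogue on a minor. The paper copes with this by first clearing the central column by $K$-operations (so that the central cross carries only the three entries $\alpha$, $u$, $\kappa$) and then working with \emph{congruences} modulo $(\pi^{-2\ell+1})$, namely \eqref{eq:xjxj1} and \eqref{eq:xjxj2}, not with identities. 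Second, your route to integrality omits the case analysis that actually drives the paper's argument: from \eqref{eq:xjxj1} one deduces that $\alpha$ and $\kappa$ are not minimal; then either $u$ is not minimal or some $\xi_i$ is; if not all anti-diagonal entries are minimal one must fall back on Lemma~\ref{lem:orbit6} (a case your proposal never addresses), and only in the remaining case does $\xi_1^2-u^2\in(\pi^{-2\ell+2})$, combined with the dichotomy on $u$, force $\ell=0$. A ``pair-by-pair balancing'' of exponents has no mechanism to produce this.

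Third, the peeling step is structurally wrong. Reducing the minor at positions $(1,n+1,2n+1)$ to $1_3$ would split off the \emph{central} row and column, leaving a complementary block of even size $2n-2$; the lemma requires $\Diag(1_2,y,1_2)$ with $y$ of odd size $2n-3$ still containing the $(n+1,n+1)$-entry, so your first peel already produces an object outside the paper's induction scheme. Moreover the element of Lemma~\ref{lem:orbit3}, embedded at rows/columns $\{1,n+1,2n+1\}$, genuinely mixes row $n+1$ with rows $1$ and $2n+1$; since row $1$ has nonzero entries in interior columns, the central row acquires new nonzero interior entries, so your assertion that the interior is ``not disturbed'' fails. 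Finally, you never use the tool the paper's endgame runs on: once $\ell=0$ one has $u\equiv\pm1$, $\xi_i\equiv\pm1\pmod{(\pi)}$, and comparing $(t^2-1)^n(t-1)\equiv\prod_{i=1}^n(t-\xi_i)^2(t-u)\pmod{(\pi)}$ forces $\xi_i\not\equiv\xi_j$ for some $i,j$; it is this pair of opposite signs on the anti-diagonal, fed into the argument of \cite[Lemma 1.6(ii)]{HK}, that produces the $1_2$-blocks. Invoking the characteristic polynomial only at the very end, to identify $y\in X_{n-2}$, is backwards.
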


\begin{proof}
  By the action of $W$, the minimal element, say $\alpha$,
  among $(j,n+1)$-entries for $1\leq j\leq 2n+1$, $j\neq n+1$ can be moved to
  $(1,n+1)$. Then by appropriate elements of $K$, all the entries
  $(j,n+1)$ for $2\leq j\leq 2n$, $j\neq n+1$ are eliminated.
  Thus we can assume that
  \begin{equation*}
    x=\left(
    \begin{array}{cccc|c|cccc}
      &&&& \alpha &&&& \xi_1 \\
      &*&&&    0   &&*&\iddots& \\
      &&&& \vdots &&\iddots&*& \\
      &&&& 0 & \xi_n &&& \\
      \hline
      \alpha^* & 0 &\cdots  &0 & u&0 &\cdots& 0 &\kappa^*\\
      \hline
      &&& \xi_n^*& 0 &&&& \\
      &*&\iddots&& \vdots &&&& \\
      &\iddots&*&& 0 &&&*& \\
       \xi_1^*&&&& \kappa &&&&\\
    \end{array}
  \right),
  \end{equation*}
  where $v_\pi(\alpha)\leq v_\pi(\kappa)$.
  Let $\ell=\ell(x)$. By $xj_{2n+1}xj_{2n+1}=1_{2n+1}$, we have
the set of equations 
\begin{gather}
\label{eq:xjxj1}
  \alpha\kappa^*+\xi_1^2\equiv1,\qquad \alpha\alpha^*\equiv0,\qquad \kappa\kappa^*\equiv0 \pmod{(\pi^{-2\ell+1})},\\
\label{eq:xjxj2}
u^2+\alpha\kappa^*+\alpha^*\kappa=1,
\end{gather}
which arise respectively from $(1,1)$, $(1,2n+1)$, $(2n+1,1)$ and $(n+1,n+1)$-entries.
Since
$\alpha\alpha^*, \kappa\kappa^* \in (\pi^{-2\ell+1})$, we see 
$\alpha, \kappa \in (\pi^{-\ell+1})$ and they are not minimal elements.

Look at the identity \eqref{eq:xjxj2}.
If $v_\pi(u) < 0$, then $2v_\pi(u) = v_\pi(\alpha\kappa^*+\alpha^*\kappa) \geq -2\ell + 2$ and $u$ is not minimal. If $v_\pi(u) \geq 0$, there must be a minimal entry among $\{\xi_i\}$ (otherwise $x$ is degenerate).
Hence in any cases, by Lemma \ref{lem:orbit6}, we may assume all the $\xi_i$ are minimal, i.e., 
$0 \geq v_\pi(\xi_i) = -\ell$, $1 \leq i \leq n$.

Then by \eqref{eq:xjxj2} and (1,1)-entry of $xj_{2n+1}xj_{2n+1}$, we see $\xi_1^2- u^2 \in (\pi^{-2\ell +2})$, hence $\xi_1-u$ or $\xi_1 + u$ is contained in $(\pi^{-\ell+1})$, and $v_\pi(u) = v_\pi(\xi_1) = -\ell$. By the above argument, we have $v_\pi(u)= v_\pi(\xi_i) = 0, 1 \leq i \leq n$.
Then, since $u^2 \equiv \xi_i^2 \equiv 1 \pmod{(\pi)}$, we see $u \equiv \pm 1, \; \xi_i \equiv \xi_i^* \equiv \pm 1 \pmod{(\pi)}$. 

By the condition of the characteristic polynomial, we have
$$
(t^2-1)^n(t-1) \equiv \prod_{i=1}^n (t - \xi_i)^2(t-u) \pmod{(\pi)},
$$
and we see $\xi_i \not\equiv \xi_j$ for some $i, j$. Then by the same argument as in \cite[Lemma 1.6 (ii)]{HK}, we finally obtain the assertion.
\end{proof}


\bigskip
Now Lemmas \ref{lem:orbit1} to \ref{lem:orbit7}
complete the proof of
Theorem \ref{thm: Cartan}.

\begin{rem}
If $k$ has even residual characteristic, then
there are some $K$-orbits without any diagonal matrix.
In fact,
the following matrix is contained in $X_n$ and can not be diagonalized, for $n = r+s, \; s>0$:
\begin{equation*}
  \begin{pmatrix}
  1_r&&&& \\
  & && -j_{\frac{s}{2}}& \\
  && j_{s+1} && \\
&-j_{\frac{s}{2}} &&&\\
&&&&1_r
\end{pmatrix}  \quad (\mbox{if } 2 \mid s), \quad
  \begin{pmatrix}
  1_r&&&& \\
  & && j_{\frac{s+1}{2}}& \\
  && -j_{s} && \\
&j_{\frac{s+1}{2}} &&&\\
&&&&1_r
\end{pmatrix}  \quad (\mbox{if } 2 \not{\mid}\; s).
\end{equation*}
\end{rem}


\proof
  We show this assertion by contradiction in a slightly general situation.
Set 
$$
x = x^* \in K \; \mbox{such that} \; x \equiv \begin{pmatrix} 1_r & & \\ & j_{2s+1}&\\&&1_r \end{pmatrix} \pmod{(\pi)}, \quad r+s=n, \; s > 0
$$
and assume there exists an element $g \in K$ which diagonalizes $x$ and write down
  \begin{equation*}
    \begin{split}
      &g=
      \begin{pmatrix}
        g_1&g_2&g_3\\
        k_1&k_2&k_3\\
        h_1&h_2&h_3
      \end{pmatrix},
            \quad \begin{array}{l} 
      g_1,h_1,g_3,h_3\in M_{r+s, r}(\mathcal{O}_{k'}), \; k_1,k_3\in M_{1, r}(\mathcal{O}_{k'})\\
k_2\in M_{1, 2s+1}(\mathcal{O}_{k'}),\;     g_2,h_2\in M_{r+s, 2s+1}(\mathcal{O}_{k'}). 
\end{array}
    \end{split}
  \end{equation*}
Then, since $g \cdot x$ is diagonal in $K$, we can assume $g\cdot x = 1_{2n+1}$. Since $g^* = j_{2n+1}g^{-1}j_{2n+1}$, we have
\begin{eqnarray*}
g \begin{pmatrix}&&j_r\\ &1_{2s+1} &\\ j_r \end{pmatrix} \equiv \begin{pmatrix}&&j_{r+s}\\ &1&\\ j_{r+s} \end{pmatrix} g \pmod{(\pi)},
\end{eqnarray*}
which implies
\begin{eqnarray*}
&&
g \equiv \begin{pmatrix} g_1 & g_2 & g_3 \\ k_1 & k_2 & k_1j_r \\j_{r+s}g_3j_r & j_{r+s}g_2 & j_{r+s}g_1j_r \end{pmatrix}   \\
&\equiv&
\begin{pmatrix}  1_{r+s+1}& \\ &j_{r+s}\end{pmatrix}
\begin{pmatrix} g_1 & g_2 & g_3j_r \\ k_1 & k_2 & k_1 \\g_3j_r & g_2 & g_1 \end{pmatrix} 
\begin{pmatrix}  1_{r+2s+1}& \\ &j_{r}\end{pmatrix} \pmod{(\pi)}.
\end{eqnarray*}
\newcommand{\rank}{{\rm rank}}
Here we have
\begin{eqnarray*}
&&
\rank\,  g\! \pmod{(\pi)}
= \rank
\begin{pmatrix} g_1+g_3j_r & 0 & g_1+g_3j_r \\ k_1 & k_2 & k_1 \\g_3j_r & g_2 & g_1 \end{pmatrix}\! \pmod{(\pi)} \\
&=&
\rank \begin{pmatrix} 0 & 0 & g_1+g_3j_r \\ 0 & k_2 & k_1 \\g_1+g_3j_r & g_2 & g_1 \end{pmatrix}\! \pmod{(\pi)} 
\\
&\leq &2r+s + 1 < n,
\end{eqnarray*}
which is a contradiction to $g \in K$.


\bigskip
\noindent
{\bf 1.4.} In this subsection, we give the $G$-orbit decomposition of $X_n$. First we recall the case of unramified hermitian matrices. It is known that there are precisely two $GL_m(k')$-orbits in $\calH_m(k')$ for $m \geq 1$:
\begin{eqnarray} \label{eq:herm-G}
\calH_m(k') &=& GL_m(k')\cdot 1_m \sqcup GL_m(k') \cdot \pi^{(1,0,\ldots,0)},\nonumber\\ 
&=& \left(\displaystyle{\sqcup_{\shita{\mu \in \Lam_m}{\abs{\mu}\, is\, even}}} GL_m(\calO_{k'})\cdot \pi^\mu \right) \sqcup \left( 
\displaystyle{\sqcup_{\shita{\mu \in \Lam_m}{\abs{\mu}\, is\, odd}}} GL_m(\calO_{k'})\cdot \pi^\mu\right), 
\end{eqnarray}
where $\abs{\mu} = \sum_{i=1}^m \mu_i$.

\bigskip
\begin{thm} \label{thm: G-orbits}
There are precisely two $G$-orbits in $X_n$ {\rm :}
\begin{eqnarray*} \label{G-orbits}
&&
G \cdot x_0 = \bigsqcup_{\shita{\lam \in \Lam_n^+}{\abs{\lam}\, is\, even}}\, K \cdot x_\lam, \quad 
G \cdot x_1 = \bigsqcup_{\shita{\lam \in \Lam_n^+}{\abs{\lam}\, is\, odd}}\, K \cdot x_\lam .
\end{eqnarray*}
where $\abs{\lambda}=\sum_{i=1}^n \lambda_i$,
$x_0 = 1_{2n+1}$ and $x_1 = \Diag(\pi, 1, \ldots, 1, \pi^{-1})$.
\end{thm}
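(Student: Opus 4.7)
The plan is to combine the Cartan decomposition of Theorem~\ref{thm: Cartan} with two complementary ingredients: explicit elements of $G$ that merge the $K$-orbits of equal parity into a single $G$-orbit, and a $G$-invariant that separates the two parities. Since Theorem~\ref{thm: Cartan} gives $X_n = \bigsqcup_{\lambda} K\cdot x_\lambda$, it suffices to assign each $K\cdot x_\lambda$ to $G\cdot x_0$ or $G\cdot x_1$ according to the parity of $\abs{\lambda}$, and then to check that the two $G$-orbits are genuinely distinct.

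For the inclusion step, I would exploit the Levi-type subgroup
\[
L = \set{g_A = \Diag(A,\, 1,\, j_n A^{*-1} j_n)}{A \in GL_n(k')},
\]
which sits inside $G$ by a direct computation of $g_A j_{2n+1} g_A^* = j_{2n+1}$. Writing $x_\lambda = \Diag(D_\lambda,\, 1,\, j_n D_\lambda^{-1} j_n)$ with $D_\lambda = \Diag(\pi^{\lambda_1}, \ldots, \pi^{\lambda_n})$, one finds
\[
g_A\cdot x_\nu = \Diag\bigl(A D_\nu A^*,\ 1,\ j_n(A D_\nu A^*)^{-1} j_n\bigr),
\]
so the $L$-action on $x_\nu$ reduces to the standard $GL_n(k')$-action $h\mapsto A h A^*$ on $D_\nu\in\calH_n(k')$. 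By \eqref{eq:herm-G} the $GL_n(k')$-orbit of $D_\nu$ consists of all $h$ with $v_\pi(\det h)\equiv\abs{\nu}\pmod 2$; since $v_\pi(\det D_\lambda) = \abs{\lambda}$, one can solve $A D_\nu A^* = D_\lambda$ whenever $\abs{\lambda}\equiv\abs{\nu}\pmod 2$, giving $g_A\cdot x_\nu = x_\lambda$. Applying this with $\nu = (0,\ldots,0)$ and $\nu = (1,0,\ldots,0)$ places each $K\cdot x_\lambda$ in the appropriate $G$-orbit.

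For the distinctness step, I would construct a $G$-invariant on $X$ valued in $k^\times/N(k'^\times)\cong\Z/2\Z$ via the spectral decomposition of $y = x j_{2n+1}$. The defining conditions on $X$ force $y^2 = 1_{2n+1}$ and $\Phi_y(t) = (t-1)^{n+1}(t+1)^n$, so $k'^{2n+1} = V_+(x)\oplus V_-(x)$ with $\dim V_+(x) = n+1$. The identity $g^* j_{2n+1} g = j_{2n+1}$ shows that $y\mapsto g y g^{-1}$ under $g\in G$, hence $V_+(g\cdot x) = g V_+(x)$, and that $G$ preserves the hermitian form $\pair{u}{v} = u^* j_{2n+1} v$. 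Since $y$ is self-adjoint for $\pair{\,}{}$ (from $y^* = j_{2n+1} y j_{2n+1}$), one has $V_+(x)\perp V_-(x)$ and the restriction of the form to $V_+(x)$ is nondegenerate; its discriminant modulo $N(k'^\times)$ thus defines a $G$-invariant $I(x)\in\Z/2\Z$. Using the $(+1)$-eigenbasis $\set{\pi^{\lambda_i} e_i + e_{2n+2-i}}{1\leq i\leq n}\cup\{e_{n+1}\}$ of $V_+(x_\lambda)$, the Gram matrix computes to $\Diag(2\pi^{\lambda_1}, \ldots, 2\pi^{\lambda_n}, 1)$, yielding $I(x_\lambda) = \abs{\lambda}\bmod 2$. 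In particular $I(x_0)\neq I(x_1)$, so the two orbits are distinct.

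The main technical hurdle is the Levi-subgroup bookkeeping and the observation that its action on the upper-left $n\times n$ block of $x_\nu$ is precisely the $GL_n(k')$-action on hermitian matrices, so that \eqref{eq:herm-G} applies verbatim. Once that is in place, the eigenspace/discriminant argument of the second step is essentially formal.
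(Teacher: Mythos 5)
Your proof is correct, and its second half takes a genuinely different route from the paper. The first step (merging $K$-orbits of equal parity) is the same argument the paper invokes tersely: the paper deduces ``at most two $G$-orbits'' from Theorem~\ref{thm: Cartan} together with \eqref{eq:herm-G}, which is precisely your Levi-subgroup computation $g_A\cdot x_\nu = \Diag\bigl(AD_\nu A^*,\,1,\,j_n(AD_\nu A^*)^{-1}j_n\bigr)$ made explicit. Where you diverge is the distinctness step. The paper proves $G\cdot 1_{2n+1}\neq X_n$ by Galois cohomology: it computes the stabilizer $H(\ol{k})\cong U(1_n)(\ol{k})\times U(1_{n+1})(\ol{k})$ of $1_{2n+1}$, invokes the exact sequence of pointed sets $1\to G\cdot 1_{2n+1}\to X_n\to H^1(\Gamma,H(\ol{k}))\stackrel{\eta}{\to}H^1(\Gamma,G(\ol{k}))$, and uses $H^1(\Gamma,U(y)(\ol{k}))\cong C_2$ to see that $\ker\eta$ is nontrivial. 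You instead build an explicit separating invariant: the discriminant in $k^\times/N(k'^{\times})\cong\Z/2\Z$ of the hermitian form $u^*j_{2n+1}v$ restricted to the $(+1)$-eigenspace of $y=xj_{2n+1}$, which is well defined since $y^2=1_{2n+1}$, $y$ is self-adjoint, and $G$ acts by $y\mapsto gyg^{-1}$ preserving the form; your eigenbasis and Gram matrix $\Diag(2\pi^{\lambda_1},\ldots,2\pi^{\lambda_n},1)$ check out, and $2\in\calO_k^\times\subset N(k'^{\times})$ gives $I(x_\lambda)=\abs{\lam}\bmod 2$. Your approach is more elementary and self-contained (no nonabelian $H^1$, no citation of Serre), and it yields directly which $K$-orbits lie in which $G$-orbit rather than only a count of orbits; the paper's approach is shorter given the standard cohomological machinery and places the result in the general framework where $G$-orbits inside a single $\ol{k}$-orbit are governed by $\ker\bigl(H^1(\Gamma,H)\to H^1(\Gamma,G)\bigr)$, which also explains conceptually why the answer is exactly two.
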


\begin{proof}
First we see that there are at most two $G$-orbits in $X_n$ by Theorem \ref{thm: Cartan} and \eqref{eq:herm-G}.
We extend the $k$-automorphism $*$ of $k'$ to an element of $\Gamma = Gal(\ol{k}/k)$ and the action of $G$ on $X$ to $G(\ol{k})$ on $X(\ol{k})$, and write by the same symbol. We recall $X(\ol{k})$ is a single $G(\ol{k})$-orbit, and set
\begin{eqnarray*}
H(\ol{k}) = \set{h \in G(\ol{k})}{h \cdot 1_{2n+1} = 1_{2n+1}}.
\end{eqnarray*}
Then we obtain   
\begin{eqnarray*}
\lefteqn{H(\ol{k})}\\
& = &
\set{ \begin{pmatrix}
                     \frac12(A+B) & \lam c & \frac12(B-A)j_n\\
                     \lam^*d & f & \lam^* d j_n\\
                     \frac12 j_n(B-A)j_n & j_n\lam c &\frac12 j_n(A+B)j_n \end{pmatrix} }
{A \in U(1_n)(\ol{k}), \; \twomatrix{B}{c}{d}{f} \in U(1_{n+1})(\ol{k}) }\\[2mm]
& \cong &
U(1_n)(\ol{k}) \times U(1_{n+1})(\ol{k}),
\end{eqnarray*}
where $\lam \in \calO_{k'}^ \times$ such that $\lam\lam^* = \frac12$.
By the exact sequence of $\Gamma$-sets
$$
\begin{array}{lcccl}
1 \longrightarrow H(\ol{k}) \longrightarrow & G(\ol{k}) &\longrightarrow &X_n(\ol{k}) &\longrightarrow 1,
\\
{} & g & \longmapsto & g \cdot 1_{2n+1} & {}
\end{array}
$$
we have an exact sequence of pointed sets (cf.~\cite[I-\S 5.4]{Serre})
$$
1 \longrightarrow G \cdot 1_{2n+1} \longrightarrow X_n \longrightarrow H^1(\Gamma, H(\ol{k})) \stackrel{\eta}{\longrightarrow} H^1(\Gamma, G(\ol{k})),
$$
and it is known that $H^1(\Gamma, U(y)(\ol{k})) \cong C_2$ for any $y \in \calH_m(k'), \; m \geq 1$. Since $\eta$ is a map from $C_2 \times C_2$ to $C_2$, ${\rm Ker}(\eta)$ cannot be trivial and $G \cdot 1_{2n+1} \ne X_n$. Hence there are at least two $G$-orbits in $X_n$, thus exactly two $G$-orbits and they are given as above.
\end{proof}




\vspace{2cm}
\Section{Spherical function $\omega(x; s)$ on $X$}

{\bf 2.1.} 
We introduce a spherical function $\omega(x; s)$ on $X$ by Poisson transform from relative $B$-invariants. 
For a matrix $g \in G$, denote by $d_i(g)$ the determinant of lower right $i$ by $i$ block of $g$.
Then $d_i(x), \; 1 \leq i \leq n$ are relative $B$-invariants on $X$ associated with rational characters $\psi_i$ of $B$, where
\begin{eqnarray} \label{eq:rel inv, char} 
d_i(p \cdot x) = \psi_i(p)d_i(x), \quad \psi_i(p) = N_{k'/k}(d_i(p)), \quad (x \in X, \; p \in B).
\end{eqnarray}
We set
\begin{eqnarray} \label{eq:X-open}
X^{op} = \set{x \in X}{d_i(x) \ne 0, \; 1 \leq i \leq n}.
\end{eqnarray}
then $X^{op}(\ol{k})$ is a Zariski open $B(\ol{k})$-orbit, where $\ol{k}$ is the algebraic closure of $k$.  
For $x \in X$ and $s \in \C^n$, we consider the integral 
\begin{eqnarray} \label{def-sph}
\omega(x; s) = \int_{K}\, \abs{\bfd(k\cdot x)}^{s} dk, \quad \abs{\bfd(y)}^s = \prod_{i=1}^n\, \abs{d_i(y)}^{s_i},
\end{eqnarray}
where $dk$ is the normalized Haar measure on $K$, and $k$ runs over the set $\set{k \in K}{k\cdot x \in X^{op}}$.
The right hand side of (\ref{def-sph}) is absolutely convergent if $\real(s_i) \geq 0, \; 1 \leq i \leq n$, and continued to a rational function of $q^{s_1}, \ldots, q^{s_n}$, and we use the notation $\omega(x; s)$ in such sense.
We call $\omega(x;s)$ a spherical function on $X$, since it becomes an $\hec$-common eigenfunction on $X$ (cf.~\cite[\S 1]{JMSJ}, or \cite[\S 1]{French}). 
Indeed, $\hec$ is a commutative $\C$-algebra spanned by all the characteristic functions of double cosets $KgK, g \in G$ by definition,  
and we see 
\begin{eqnarray}
(f * \omega(\; ; s))(x) & \big( = & \int_{G}\, f(g)\omega(g^{-1}\cdot x; s)dg \big) \nonumber \\
&=&
\lam_s(f) \omega(x; s), \quad (f \in \hec),
\end{eqnarray}
where $dg$ is the Haar measure on $G$ normalized by $\int_{K}dg = 1$, and  $\lam_s$ is the $\C$-algebra homomorphism defined by 
\begin{eqnarray*} \label{eigen funct}
&&
\lam_s : \hec \longrightarrow \C(q^{s_1}, \ldots, q^{s_n}), \\
&&
\quad
f \longmapsto \int_{B}\, f(p)\abs{\psi(p)}^{-s}\delta(p) dp.
\end{eqnarray*}
Here $dp$ is the left-invariant measure on $P$ such that $\int_{P \cap K} dp = 1$ and $\delta(p)$ is the modulus character of $dp$ ($d(pq) = \delta(q)^{-1}dp$).

\bigskip
We introduce a new variable $z$ which is related to $s$ by
\begin{eqnarray} \label{change of var}
s_i = -z_i + z_{i+1} -1 + \frac{\pi\sqrt{-1}}{\log q} \quad (1 \leq i \leq n-1), \quad
s_n = -z_n -1 + \frac{\pi\sqrt{-1}}{2\log q}
\end{eqnarray}
and write $\omega(x;z) = \omega(x; s)$. 
We see
\begin{eqnarray}
\abs{\psi(p)}^s = (-1)^{v_\pi(p_1\cdots p_n)} \prod_{i=1}^n \abs{N_{k'/k}(p_i)}^{z_i} \delta^{\frac12}(p), \quad 
p \in B,
\end{eqnarray}
where $p_i$ is the $i$-the diagonal entry of $p$, $1 \leq i \leq n$. 
The Weyl group $W$ of $G$ with respect to the maximal $k$-split torus in $B$ acts on rational characters of $B$ as usual (i.e., $\sigma(\psi)(b) = \psi(n_\sigma^{-1}b n_\sigma)$ by taking a representative $n_\sigma$ of $\sigma$), so $W$ acts on  $z \in \C^n$ and on $s \in \C^n$ as well.  We will determine the functional equations of $\omega(x; s)$ with respect to this Weyl group action.
The group $W$ is isomorphic to $S_n \ltimes C_2^n$, $S_n$ acts on $z$ by permutation of indices, and $W$ is generated by $S_n$ and $\tau: (z_1, \ldots, z_n) \longmapsto (z_1, \ldots, z_{n-1}, -z_n)$. Keeping the relation (\ref{change of var}), we also write $\lam_z(f) = \lam_s(f)$.
Then the $\C$-algebra map $\lam_z$ is an isomorphism (the Satake isomorphism)
\begin{eqnarray} \label{satake iso}
\lam_z  &:& \hec \stackrel{\sim}{\longrightarrow} \C[q^{\pm 2z_1}, \ldots, q^{\pm 2z_n}]^W,
\end{eqnarray}
where the ring of the right hand side is the invariant subring of the Laurent polynomial ring $\C[q^{2z_1}, q^{-2z_1}, \ldots, q^{2z_n}, q^{-2z_n}]$ by $W$.

\bigskip
By using a result on spherical functions on the space of hermitian forms, we obtain the following result.

\begin{thm} \label{th: feq Sn}
The function $G_1(z) \cdot \omega(x; s)$ is invariant under the action of $S_n$ on $z$, where
\begin{eqnarray}
G_1(z) = \prod_{1 \leq i < j \leq n}\, \frac{1 + q^{z_i-z_j}}{1 - q^{z_i-z_j-1}}.
\end{eqnarray}
\end{thm}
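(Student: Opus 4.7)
The approach I would take is to reduce the $S_n$-functional equation to the known $A_{n-1}$-functional equation for spherical functions on the space $\calH_n(k')$ of non-degenerate hermitian matrices under $GL_n(k')$, by exploiting the Siegel-type block structure of $B$ displayed in \eqref{eq:B}. The key observation is that the $n$ characters $\psi_i(p)=N_{k'/k}(d_i(p))$ are pulled back along the projection $B\to B_n$ onto the $GL_n(k')$-block $A$ from the standard Borel characters of $GL_n(k')$, so that the subgroup $S_n\subset W$ acts on $z$ exactly as the Weyl group of $GL_n(k')$ on a diagonal character. This is what makes the comparison with hermitian spherical functions meaningful.

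Next, using the Iwasawa-type decomposition $G=PK$ relative to the Siegel-type maximal parabolic $P\supset B$ with Levi $M\cong GL_n(k')\times U_1$ and unipotent radical $N$, I would rewrite $\omega(x;s)$ as an integral over $K$ parametrized by $(P\cap K)\backslash K$. Since each $d_i(x)$ is the determinant of a lower-right block, it is invariant under the left action of $N$ and depends on the $M$-component only through $N_{k'/k}(d_i(A))$. Consequently the integrand separates, the $N\cap K$-part contributes only a constant, and what remains is an integral over $(M\cap K)\backslash K$ whose integrand matches the defining integrand of a spherical function $\omega_{\calH}(y(x);z)$ on $\calH_n(k')$, for a hermitian form $y(x)\in\calH_n(k')$ canonically extracted from $x$ via the block decomposition.

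Finally, I would invoke the known $S_n$-functional equation for $\omega_{\calH}$ on hermitian matrices (Hironaka, Jacquet et al., also used in \cite[\S2]{HK} in the even-size case): the product $G_1(z)\,\omega_{\calH}(y;z)$ is $S_n$-invariant, the numerator $1+q^{z_i-z_j}$ in $G_1$ reflecting the doubling of valuations by $N_{k'/k}$, equivalently the shift $-1+\pi\sqrt{-1}/\log q$ built into \eqref{eq:chgv}. Transporting this invariance back through the reduction of the previous paragraph yields the theorem.

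The main obstacle is making the second step rigorous: one must verify that the $P\cap K$-quotient really does identify $\omega(x;s)$ with $\omega_{\calH}(y(x);z)$ cleanly, including matching the modulus character $\delta$ of $B$ against the modulus of the $GL_n(k')$-Borel acting on $\calH_n(k')$, so that no spurious $S_n$-equivariant factor is introduced during the comparison. Once this bookkeeping is carried out, the $S_n$-functional equation is purely a consequence of its counterpart on the hermitian-forms side.
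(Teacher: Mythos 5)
Your overall route is the same as the paper's: reduce the $S_n$-equation to the known functional equation for spherical functions on $\calH_n(k')$, using the fact that each $d_i$ sees only the lower right $n\times n$ block, and then cite the hermitian result of \cite{JMSJ}. The genuine gap is in your middle step. What the Fubini argument over $P\cap K$ (equivalently, the paper's insertion of an integral over $K_0=GL_n(\calO_{k'})$ embedded by $h\mapsto\wt h=\Diag(j_nh^{*-1}j_n,\,1,\,h)$) actually produces is the integral representation
\begin{eqnarray*}
\omega(x;s)\;=\;\int_K \zeta_*^{(h)}\bigl(D(k\cdot x);s\bigr)\,dk,
\end{eqnarray*}
where $D(k\cdot x)$ is the lower right $n\times n$ block of $k\cdot x$ and $\zeta_*^{(h)}$ is the spherical function on $\calH_n(k')$. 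The argument $D(k\cdot x)$ genuinely varies with $k$ over different $GL_n(\calO_{k'})$-orbits, so this outer integral cannot be collapsed, and there is no identification $\omega(x;s)=\omega_{\calH}(y(x);z)$ with a single hermitian form $y(x)$ ``canonically extracted from $x$.'' Indeed, since $\abs{d_n(h\cdot y)}=\abs{\det y}$ is constant on each $GL_n(\calO_{k'})$-orbit, such an identification would force $\omega(x;s)$ to depend on $s_n$ only through the monomial $\abs{\det y(x)}^{s_n}$, which already contradicts the $n=1$ computation of Proposition~\ref{explicit n=1}, and for general $n$ is incompatible with the $\tau$-functional equation of Theorem~\ref{th: tau} and the type $C_n$ explicit formula of Theorem~\ref{th: explicit}. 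The precise point where your reasoning breaks is the claim that $d_i$ ``depends on the $M$-component only through $N_{k'/k}(d_i(A))$'': this holds only when $A$ is triangular. The invariants $d_i$ with $i<n$ are relative invariants of $B$, not of the Siegel parabolic $P$; for a general Levi element $m=\Diag(A,u,j_nA^{*-1}j_n)\in M\cap K$ one has $d_i(m\cdot y)=d_i\bigl(h\cdot D(y)\bigr)$ with $h=j_nA^{*-1}j_n$, a genuine $GL_n$-action on the block rather than multiplication by a character.

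The gap is fixable, and the fix is exactly the paper's proof: keep the outer integral. Since $G_1(z)$ does not depend on $k\in K$, and since $G_1(z)\,\zeta_*^{(h)}(y;s)$ is holomorphic in $z$ and $S_n$-invariant for every fixed $y\in\calH_n(k')$ --- which the paper establishes by the relation $d_i(y)=\det(y)\,\what{d}_{n-i}(y^{-1})$, reducing $\zeta_*^{(h)}$ to the function $\zeta^{(h)}$ whose functional equation is proved in \cite{JMSJ} --- the invariance passes through the integral sign and yields the theorem. Note also that the ``main obstacle'' you single out, matching the modulus character $\delta$ of $B$ against that of the $GL_n$-Borel, never arises: every integration in the reduction is over a compact group with normalized Haar measure, so no modulus factors appear; $\delta$ enters only in the eigenvalue $\lam_s$, not in the proof of the functional equation.
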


\proof
By the embedding 
\begin{eqnarray*} \label{embed}
K_0 = GL_n(\calO_{k'})
\longrightarrow K, \quad h \longmapsto \wt{h} = \begin{pmatrix} {jh^{* -1}j} &  & \\
 & 1 & \\
 &  & {h} \end{pmatrix}, 
\end{eqnarray*}
and the normalized Haar measure $dh$ on $K_0$, we obtain, for $s \in \C^n$ satisfying $\real(s_i) \geq 0, \; 1 \leq i \leq n$,
\begin{eqnarray}
\omega(x;z) &=& \omega(x;s) = 
\dint{K_0}\,dh \dint{K}\, \abs{\bfd(k\cdot x)}^{s} dk
\nonumber \\
&=&
\dint{K_0}\,dh \dint{K}\, \abs{\bfd(\wt{h} k\cdot x)}^{s} dk
=
\dint{K}\, \dint{K_0}\, \abs{\bfd(\wt{h} k\cdot x)}^{s} dh dk\nonumber\\
&=&
\dint{K}\, {\zeta}_*^{(h)}(D(k\cdot x); s) dk. 
\label{id with herm}
\end{eqnarray}
Here $D(k\cdot x)$ is the lower right $n$ by $n$ block of $k \cdot x$ for $\set{k \in K}{k\cdot x \in X^{op}}$, and ${\zeta}_*^{(h)}(y; s)$ is a spherical function on $\calH_n(k')$ defined by
\begin{eqnarray*}
{\zeta}_*^{(h)}(y; s) = \dint{K_0}\, \abs{\bfd(h\cdot y)}^{s} dh, \quad (h \cdot y = hyh^*),
\end{eqnarray*}
where $h$ runs over the set $\set{h \in K_0}{d_i(h\cdot y) \ne 0, \; 1 \leq i \leq n}$. Then the assertion of Theorem~\ref{th: feq Sn} follows from the next proposition. 
We recall a similar spherical function on $\calH_n(k')$ and its functional equation, where we keep the definition of $G_1(z)$ and the relation between $s$ and $z$ as before.
\qed

%
\begin{prop} 
For any $y \in \calH_n(k')$, the function $G_1(z) \cdot {\zeta}_*^{(h)}(y; s)$ is holomorphic for $z \in \C^n$ and invariant under the action of $S_n$, where the relation between $z$ and $s$ is the same as in (\ref{change of var}).
\end{prop}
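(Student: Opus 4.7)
The strategy is to reduce via the Cartan decomposition \eqref{herm Cartan} to the case $y = \pi^\mu$ with $\mu \in \Lam_n$. Since $\zeta_*^{(h)}(\; ; s)$ is $K_0$-invariant, it suffices to show that $G_1(z)\, \zeta_*^{(h)}(\pi^\mu;s)$ is holomorphic in $z$ and invariant under $S_n$ for every $\mu \in \Lam_n$.

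For such diagonal $\pi^\mu$ I would evaluate $\zeta_*^{(h)}(\pi^\mu;s)$ by unfolding the $K_0$-integral along the Iwasawa decomposition $GL_n(k') = B_n K_0$. Decomposing $K_0$ according to its intersection with the Bruhat cells $B_n \sigma B_n$ ($\sigma \in S_n$) and integrating out the unipotent radical of $B_n$, one obtains for each $\sigma$ a product of rank-one $p$-adic integrals indexed by the positive roots $e_i - e_j$ sent to negative roots by $\sigma^{-1}$. Under the substitution \eqref{change of var} (restricted to the first $n-1$ coordinates), each such rank-one integral contributes the Gindikin--Karpelevich factor $(1 - q^{z_i - z_j - 1})/(1 + q^{z_i - z_j})$, leading to an identity of the schematic form
\begin{equation*}
\zeta_*^{(h)}(\pi^\mu; s) = \mathrm{const} \cdot G_1(z)^{-1} \cdot \sum_{\sigma \in S_n} \sigma\!\left( q^{-\pair{\mu}{z}} \prod_{1 \leq i < j \leq n} \frac{1 + q^{-1} q^{z_i - z_j}}{1 - q^{z_i - z_j}}\right),
\end{equation*}
so that $G_1(z) \zeta_*^{(h)}(\pi^\mu; s)$ equals, up to a scalar and a Poincar\'e-type factor, a Hall--Littlewood polynomial of type $A_{n-1}$ with parameter $t = -q^{-1}$.

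From this explicit formula both conclusions are immediate: $S_n$-invariance is manifest from the outer symmetrization, and holomorphy on each reflection hyperplane $q^{z_i - z_j} = 1$ follows by the standard cancellation argument in which the residues from $\sigma$ and $\sigma s_{ij}$ are opposite and cancel in pairs. The main obstacle is the Gindikin--Karpelevich bookkeeping: one must compute the modulus character $\delta_{B_n}$ correctly, confirm the specialization $t = -q^{-1}$ arising from the unramified hermitian structure (rather than the $t = q^{-1}$ appropriate to symmetric forms), and handle the normalizations of the measures on $K_0$ and on unipotent radicals consistently. Alternatively, one may simply invoke Macdonald's general spherical-function formula \cite[\S 10]{Mac} applied to the symmetric pair $(GL_n(k'), U_n(k'))$ of type $A_{n-1}$, or cite the author's earlier work on spherical functions on $\calH_n(k')$, where the functional equation in precisely this form is already recorded.
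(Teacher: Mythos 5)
There is a genuine gap, and it sits in the step you yourself flag as mere ``bookkeeping.'' The unfolding you propose --- decomposing $K_0$ along Bruhat cells of $B_n$ and collecting Gindikin--Karpelevich factors --- is the computation for zonal spherical functions \emph{on the group}, i.e.\ for integrals of a character of the Iwasawa projection of $h$. Here the integrand is $\prod_i \abs{d_i(h\cdot y)}^{s_i}$ with $h\cdot y = hyh^*$, which is \emph{not} a function of the Iwasawa components of $h$; the $B_n$-orbit geometry of the symmetric space $\calH_n(k')$ (where the stabilizer of $1_n$ is the unitary group, not a parabolic) is what makes this hard, and obtaining the explicit Hall--Littlewood-type formula with $t=-q^{-1}$ is the main theorem of a separate paper of the first author (\cite{JMSJ}), where it is derived \emph{after} the functional equations are established, not before. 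For the same reason your alternative of ``simply invoking'' Macdonald \cite[\S 10]{Mac} fails: that formula covers $G/K$ for a $p$-adic reductive group, not the symmetric pair $(GL_n(k'),U_n)$. So your primary route assumes essentially the statement to be proved.

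Your last fallback --- citing the author's earlier work on $\calH_n(k')$ --- is in fact the paper's route, but the result is not recorded there ``in precisely this form,'' and the missing conversion is the entire content of the paper's proof. The earlier paper treats $\zeta^{(h)}(y;s)=\int_{K_0}\prod_i\abs{\what{d}_i(h\cdot y)}^{s_i}dh$ built from \emph{upper-left} minors $\what{d}_i$, whereas the proposition concerns $\zeta_*^{(h)}$ built from \emph{lower-right} minors $d_i$. The paper bridges the two via the identity $d_i(y)=\det(y)\,\what{d}_{n-i}(y^{-1})$, which yields
\begin{equation*}
\zeta_*^{(h)}(y;s)=\zeta^{(h)}\bigl(y^{-1};\,s'\bigr),\qquad s'=(s_{n-1},\ldots,s_1,-(s_1+\cdots+s_n)),
\end{equation*}
and then checks that the $z$-variable $w$ attached to $s'$ by (\ref{change of var}) satisfies $w_i-w_j=z_{n-j+1}-z_{n-i+1}$, whence $G_1(w)=G_1(z)$ and the $S_n$-invariance and holomorphy of $G_1\cdot\zeta^{(h)}$ transfer to $G_1\cdot\zeta_*^{(h)}$. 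Without this minor-inversion argument and the verification that the induced change of spectral variables preserves $G_1$, the citation does not prove the stated proposition; with it, no explicit formula, Cartan reduction, or residue cancellation is needed at all.
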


\proof
In \cite[\S 2]{JMSJ}, we have considered the following spherical function on $\calH_n(k')$ 
\begin{eqnarray*}
\zeta^{(h)}(y; s) = \dint{K_0}\, \prod_{i=1}^n\, \abs{\what{d}_i(h\cdot y)}^{s_i} dh, 
\end{eqnarray*}
where $\what{d}_i(y)$ is the determinant of upper left $i$ by $i$ block of $y$,
and shown that the function $G_1(z) \cdot \zeta^{(h)}(y; s)$ 
is holomorphic for $z \in \C^n$ and invariant under the action of $S_n$.
Here we note $z_i - z_j = -(s_i+\cdots +s_{j-1}) - (j-i)- (j-i)\frac{\pi\sqrt{-1}}{\log q}$ is determined by the relation of $s_1, \ldots, s_{n-1}$ in (\ref{change of var}). 
Since $d_i(y) = \det(y) \what{d}_{n-i}(y^{-1})$, we see in $s$-variable
\begin{eqnarray*} 
{\zeta}_*^{(h)}(y; s) &=& 
\dint{K_0}\, \abs{\det(h \cdot y)}^{\sum_{i=1}^n\,s_i} \prod_{i=1}^{n-1}\, \abs{\what{d}_{n-i}(h^{* -1}\cdot y^{-1})}^{s_i} dh\\
& = & 
\zeta^{(h)}(y^{-1}; s_{n-1}, \ldots, s_1, -(s_1+ \dots + s_n))\\
& \Big( & =  \zeta^{(h)}(y^{-1}; s'), say \Big).
\end{eqnarray*}
Let $w$ be the $z$-variable corresponding to the above $s'$ under the relation (\ref{change of var}). Then  $w_i - w_j = z_{n-j+1} - z_{n-i+1}$ for $1 \leq i < j \leq n$, and $G_1(w) = G_1(z)$. 
Hence $G_1(z) \cdot \zeta_*^{(h)}(y; s) = G_1(w) \cdot \zeta^{(h)}(y^{-1}; s')$ is holomorphic and $S_n$-invariant.
\qed

\slit
\noindent
{\bf 2.2.} In this subsection, we consider the case $n =1$ and show the following. 

\begin{prop} \label{explicit n=1}
Assume $n = 1$. For $x_\ell = \Diag(\pi^\ell, 1, \pi^{-\ell})$, $\ell \geq 0$, it holds
\begin{eqnarray*}
\omega(x_\ell; s)   
&=& 
\frac{(1+q^{-3-2s})}{(1+q^{-3})(1-q^{-4-4s})} \left\{q^{\ell s}(1-q^{-4-2s})-q^{-2(\ell+1)-\ell s}(1-q^{-2s})\right\}\\
&=&
\frac{\sqrt{-1}^\ell q^{-\ell}(1-q^{-1+2z})}{(1+q^{-3})(1+q^{2z})}
\left\{ q^{-\ell z}\frac{(1+q^{-2+2z})}{1-q^{2z}} + q^{\ell z}\frac{(1+q^{-2-2z})}{1-q^{-2z}} \right\},
\end{eqnarray*}
where $s = -z-1-\frac{\pi\sqrt{-1}}{2\log q}$, and for any $x \in X_1$
\begin{eqnarray*}
&&
\frac{1+q^{2z}}{1-q^{-1+2z}} \cdot \omega(x; z) \in \C[q^{z} + q^{-z}], \quad
\omega(x;z) = \frac{1-q^{-1+2z}}{q^{2z}-q^{-1}} \cdot \omega(x; -z).
\end{eqnarray*}
\end{prop}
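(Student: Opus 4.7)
The plan is to compute $\omega(x_\ell;s)$ directly from its defining integral \eqref{def-sph} using the explicit parameterization of $K=K_{1,1}\sqcup K_{1,2}$ provided in Lemma~\ref{explicit K1}. Since $d_1(y)=y_{33}$ and $x_\ell$ is diagonal,
$$
d_1(k\cdot x_\ell)=\pi^{\ell}N(k_{31})+N(k_{32})+\pi^{-\ell}N(k_{33}),
$$
which depends only on the third row of $k$; that row can be read off immediately from the factorization in Lemma~\ref{explicit K1}.

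On $K_{1,1}$ the third row is $\alpha^{*-1}(1,b,c)$ with $c=-N(b)/2+c_0\sqrt{\epsilon}$, so a short manipulation using $(\pi^\ell+N(b)/2)^2-\epsilon c_0^2=N_{k'/k}(\pi^\ell+N(b)/2+c_0\sqrt{\epsilon})$ gives
$$
|d_1(k\cdot x_\ell)|^s=q^{\ell s}\,\bigl|N_{k'/k}(\pi^\ell+N(b)/2+c_0\sqrt{\epsilon})\bigr|^s,
$$
independent of $(\alpha,u,d,f_0)$, which contribute only the Haar volume of $K_{1,1}$. On $K_{1,2}$ the same procedure yields $k_{33}\equiv\alpha^{*-1}\pmod{\mathfrak{p}}$ while the other two terms acquire strictly higher $\pi$-valuation; thus $|d_1(k\cdot x_\ell)|^s$ is the constant $q^{\ell s}$ almost everywhere for $\ell\geq 1$ and a mild perturbation of $1$ for $\ell=0$. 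The $K_{1,1}$-integral is then a standard $p$-adic integral over $\calO_{k'}\times\calO_k$, which I would evaluate by layering according to $v_{k'}(b)=m\geq 0$ and summing the resulting geometric series in $q^{-s}$; the $K_{1,2}$-integral reduces essentially to a volume.

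Summing the two contributions and simplifying produces the first displayed formula for $\omega(x_\ell;s)$. Substituting $s=-z-1+\frac{\pi\sqrt{-1}}{2\log q}$, which sends $q^{-2s}\mapsto -q^{2z+2}$ and $q^{\ell s}\mapsto\sqrt{-1}^{\ell}q^{-\ell(z+1)}$, and regrouping the bracket symmetrically in $z$ and $-z$ yields the second expression. From that symmetric form the remaining assertions are immediate: holomorphy and invariance under $z\mapsto -z$ of $(1+q^{2z})/(1-q^{-1+2z})\cdot\omega(x_\ell;z)$ as a Laurent polynomial in $q^{\pm z}$ is visible in the bracket, and the functional equation $\omega(x;z)=\frac{1-q^{-1+2z}}{q^{2z}-q^{-1}}\omega(x;-z)$ follows by exchanging $z\leftrightarrow -z$ in that bracket. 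Since Theorem~\ref{thm: Cartan} gives $X_1=\bigsqcup_{\ell\geq 0}K\cdot x_\ell$ and $\omega(\cdot;z)$ is $K$-invariant, the cases $x=x_\ell$ cover all $x\in X_1$.

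The main technical obstacle is the bookkeeping of Haar measures: one must correctly determine the volumes of $K_{1,1}$ and $K_{1,2}$ inside $K$ (which produce the overall prefactor $(1+q^{-3})^{-1}$), the pushforward measure of the map $(b,c_0)\mapsto\pi^\ell+N(b)/2+c_0\sqrt{\epsilon}$ on $\calO_{k'}$, and the layerwise behaviour of the $b$-integral across the range in which $\pi^\ell+N(b)/2$ first acquires positive valuation. A miscount at any of these steps would leave the final rational function in $q^s$ off by a factor that is essentially invisible outside of the target identity itself.
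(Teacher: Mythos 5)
Your proposal follows essentially the same route as the paper's proof: split $K$ via Lemma~\ref{explicit K1}, use that $d_1(k\cdot x_\ell)$ depends only on the third row so that the $K_{1,2}$-part is the constant $q^{\ell s}$ times its volume (in fact $|d_1(k\cdot x_\ell)|=q^{\ell}$ holds exactly on all of $K_{1,2}$, for $\ell=0$ as well), reduce the $K_{1,1}$-part to the integral $\int_{\calO_{k'}}\!\int_{\calO_k}|(\pi^\ell+N(b)/2)^2-c_0^2\epsilon|^s\,db\,dc_0$, evaluate it by layering in $v_\pi(b)$ with the norm-fiber volumes of Lemma~\ref{lem:vol}, and then pass to the $z$-variable to read off the symmetry and the functional equation. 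The measure bookkeeping you flag as the main obstacle is exactly what the paper supplies (the ratio $vol(K_{1,1}):vol(K_{1,2})=1:q^{-3}$ and the even/odd $\ell$ case split in the layered sums), so the plan is correct and matches the paper's argument.
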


\slit

\begin{lem}
\label{lem:vol}
For $\xi \in \calO_k^\times$ and $r \geq 0$, set $A(\xi;r) = \set{x \in \calO_{k'}}{v_\pi(N(x)-\xi) = r}$. By the Haar measure on $k'$ normalized by $vol(\calO_{k'}) = 1$, it holds
\begin{eqnarray*}
&&
vol(A(\xi; r)) = \left\{
\begin{array}{ll}
1-q^{-1}-q^{-2} & \mbox{if } r = 0,\\
(1-q^{-2})q^{-r} &\mbox{if } r > 0. 
\end{array} \right.
\end{eqnarray*}
\end{lem}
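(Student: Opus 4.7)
\medskip
\noindent\textit{Proof proposal.}
The plan is to compute the volumes of the nested ``level sets''
$$
B(\xi;r) := N^{-1}(\xi + \pi^r \calO_k) = \set{x \in \calO_{k'}}{v_\pi(N(x)-\xi) \geq r}
$$
and then recover $A(\xi;r)$ as the set difference $B(\xi;r)\setminus B(\xi;r+1)$, so that $\mathrm{vol}(A(\xi;r)) = \mathrm{vol}(B(\xi;r)) - \mathrm{vol}(B(\xi;r+1))$. The task is therefore reduced to computing $\mathrm{vol}(B(\xi;r))$ for all $r \geq 0$, and the case $r=0$ is trivial since $B(\xi;0) = \calO_{k'}$ has volume $1$.

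For $r \geq 1$, the assumption $\xi \in \calO_k^\times$ forces $\xi+\pi^r\calO_k \subset \calO_k^\times$, hence $B(\xi;r) \subset \calO_{k'}^\times$. I would then compute the pushforward of the normalized Haar measure on $\calO_{k'}^\times$ along the surjective group homomorphism $N : \calO_{k'}^\times \to \calO_k^\times$. Since multiplication by any $u \in \calO_{k'}^\times$ preserves the Haar measure on $\calO_{k'}$, and $N(u)$ ranges over all of $\calO_k^\times$, the pushforward measure on $\calO_k^\times$ is translation-invariant, hence equals a constant $C$ times the normalized Haar measure of $\calO_k$ restricted to $\calO_k^\times$. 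Evaluating both sides on $\calO_k^\times$ itself gives
$$
1-q^{-2} \;=\; \mathrm{vol}(\calO_{k'}^\times) \;=\; C \cdot (1-q^{-1}),
$$
so $C = 1+q^{-1}$. Applying this to the subset $\xi + \pi^r\calO_k$ of volume $q^{-r}$ in $\calO_k^\times$ yields $\mathrm{vol}(B(\xi;r)) = (1+q^{-1})\,q^{-r}$ for every $r \geq 1$. Subtracting then produces the stated values: $1 - (1+q^{-1})q^{-1} = 1 - q^{-1} - q^{-2}$ when $r=0$, and $(1+q^{-1})q^{-r}(1-q^{-1}) = (1-q^{-2})q^{-r}$ when $r \geq 1$.

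The only step requiring care is the identification of the constant $C$, and no serious obstacle arises. Should one prefer to avoid the measure-theoretic argument, the same constant can be obtained by a direct residue-level count: the norm map $N : (\calO_{k'}/\pi^r)^\times \to (\calO_k/\pi^r)^\times$ is a surjective homomorphism whose fibers over units have uniform size $q^{r-1}(q+1)$, giving $B(\xi;r)$ as a union of $q^{r-1}(q+1)$ cosets of $\pi^r\calO_{k'}$, each of measure $q^{-2r}$, and one recovers $(1+q^{-1})q^{-r}$ exactly.
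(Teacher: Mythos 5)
Your proposal is correct, but it takes a different route from the paper. The paper works directly with each set $A(\xi;r)$: for $r>0$ it observes that the induced map $\ol{N}:\calO_{k'}^\times/(\pi^{r+1})\to\calO_k^\times/(\pi^{r+1})$ is surjective with fibers of uniform size $(q+1)q^r$, so that $A(\xi;r)$ is a union of $(q^2-1)q^r$ cosets $\bmod\,(\pi^{r+1})$, and it then handles $r=0$ by a separate explicit decomposition $A(\xi;0)=\pi\calO_{k'}\cup\{x\in\calO_{k'}: N(x)\not\equiv\xi \bmod (\pi)\}$, counting $q^2-q-1$ cosets $\bmod\,(\pi)$. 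You instead compute the superlevel sets $B(\xi;r)=\{x: v_\pi(N(x)-\xi)\geq r\}$ and telescope, which lets you treat $r=0$ and $r>0$ uniformly and eliminates the paper's case split; your key step, that the pushforward of additive Haar measure on $\calO_{k'}^\times$ under $N$ is $(1+q^{-1})$ times additive Haar measure restricted to $\calO_k^\times$, is a clean structural fact (invariance under multiplication by $N(u)$ for $u\in\calO_{k'}^\times$ plus uniqueness of Haar measure on the compact group $\calO_k^\times$, with the constant fixed by evaluating on $\calO_k^\times$), and it is sound because multiplication by a unit preserves additive Haar measure and the norm is surjective on units --- the same surjectivity the paper relies on. Your fallback residue-level count ($B(\xi;r)$ is a union of $(q+1)q^{r-1}$ cosets of $\pi^r\calO_{k'}$) is essentially the paper's computation transplanted from $A$ at level $\pi^{r+1}$ to $B$ at level $\pi^r$. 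In short: the paper's argument is a direct two-case coset count; yours buys uniformity in $r$ and isolates a reusable pushforward identity, at the modest cost of invoking uniqueness of Haar measure. One cosmetic point: ``translation-invariant'' should read ``invariant under multiplication by $\calO_k^\times$,'' since the relevant group structure on $\calO_k^\times$ is multiplicative.
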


\proof
Assume $r > 0$. Since the norm map $N : \calO_{k'}^\times \longrightarrow \calO_k^\times$ is surjective, the induced map
$$
\ol{N} : \calO_{k'}^\times/{\rm mod}(\pi^{r+1}) \longrightarrow \calO_k^\times/{\rm mod}(\pi^{r+1}) 
$$
is surjective and $((q+1)q^r : 1)$-map. 
Thus we see the set
$$
A(\xi; r) = \calO_{k'}^\times \cap A(\xi; r)
$$
consists of $(q^2-1)q^r$ cosets ${\rm mod}(\pi^{r+1})$, hence $vol(A(\xi; r)) = (1-q^{-2})q^{-r}$. 
On the other hand, we see the set
$$
A(\xi; 0) = \pi\calO_{k'} \cup \set{x \in \calO_{k'}}{N(x) \not\equiv \xi \; {\rm mod}(\pi)},
$$
and it consists of $1 + \{(q^2-1)-(q+1)\} = q^2 - q-1$ cosets ${\rm mod}(\pi)$, hence $vol(A(\xi;0)) = 1-q^{-1}-q^{-2}$.
\qed

\slit
\textit{Proof of Proposition \ref{explicit n=1}.}
We recall Lemma~\ref{explicit K1}. It is easy to see that $vol(K_{1,1}) : vol(K_{1,2}) = 1 : q^{-3}$.
For $k \in K_{1,2}$ expressed as in Lemma~\ref{explicit K1}, we have  
\begin{eqnarray*}
\abs{d_1(k \cdot x_\ell)} &=& \abs{\pi^\ell N(c) + N(cd-b^*) + \pi^{-\ell} N(1+b^*d^*+cf)}\\
&=& \abs{\pi}^{-\ell}\abs{ \pi^{2\ell}N(c) + \pi^\ell N(cd-b^*) + N(1+b^*d^*+cf)} = \abs{\pi}^{-\ell}
\end{eqnarray*}
and
\begin{eqnarray} \label{K2}
\int_{K_{1,2}}\, \abs{d_1(k\cdot x_\ell)}^s dk = \frac{q^{-3 + \ell s}}{1+q^{-3}}.
\end{eqnarray}

\slit
For $k \in K_{1,1}$ expressed as in Lemma~\ref{explicit K1}, we have  
\begin{eqnarray*}  \label{K1-d1}
\abs{d_1(k\cdot x_\ell)} =  \abs{\pi^\ell + N(b) + \pi^{-\ell}(\frac{N(b)^2}{4}-c_0^2 \eps)} 
=
\abs{\pi}^{-\ell} \abs{(\pi^\ell + \frac{N(b)}{2})^2 - c_0^2 \eps },
\end{eqnarray*}
and
\begin{eqnarray*}
\int_{K_{1,1}}\abs{d_1(k\cdot x_\ell)}^s dk &=& 
\frac{q^{\ell s}}{1+q^{-3}} \int_{\calO_{k'}}db \int_{\calO_k}dc_0 \abs{(\pi^\ell + \frac{N(b)}{2})^2 - c_0^2 \eps }^s.
\end{eqnarray*}

\noindent
Assume $\ell$ is even and positive. Then
\begin{eqnarray*}
\lefteqn{(1+q^{-3})q^{-\ell s} \int_{K_{1,1}}\abs{d_1(k\cdot x_\ell)}^s dk} \nonumber \\
&=& 
\sum_{r = 0}^{\frac{\ell}{2}-1} \int_{\pi^r\calO_{k'}^\times} db \int_{\calO_k} dc_0 \abs{(\pi^\ell+\frac{N(b)}{2})^2-c_0^2\eps}^s +
 q^{-\ell}\int_{\calO_{k'}} db \int_{\calO_k} dc_0 \abs{\pi^{2\ell}(1+\frac{N(b)}{2})^2 - c_0^2 \eps}^s \nonumber\\
&=&
\sum_{r = 0}^{\frac{\ell}{2}-1} (1-q^{-2})q^{-2r} \left(\sum_{m=0}^{2r-1} (1-q^{-1})q^{-m-2ms} + q^{-2r-4rs} \right) \nonumber \\
&&
+
(1-q^{-1}-q^{-2})q^{-\ell} \left(  \sum_{m=0}^{\ell-1} (1-q^{-1})q^{-m-2ms} + q^{-\ell -2\ell s} \right) \nonumber\\
&&
+ 
\sum_{r \geq 1}(1 - q^{-2}) q^{-(\ell+r)} \left(  \sum_{m=0}^{\ell+r-1} (1-q^{-1})q^{-m-2ms} + q^{-(\ell+r) -2(\ell+r) s} \right),   \label{eq:even ell-1}
\end{eqnarray*}
where we have used Lemma \ref{lem:vol}. 
 Further, using the following equation
\begin{eqnarray} \label{useful}
\sum_{m=0}^{k-1} (1-q^{-1})q^{-m-2ms} + q^{-k-2ks} = \frac{(1-q^{-1}) + q^{-(k+1)-2ks}(1-q^{-2s})}{1 - q^{-1-2s}},
\end{eqnarray}
we have
\begin{eqnarray}
\lefteqn{(1-q^{-1-2s})(1+q^{-3})q^{-\ell s} \int_{K_{1,1}}\abs{d_1(k\cdot x_\ell)}^s dk} \nonumber \\
&=&
\sum_{r=0}^{\ell/2 -1} (1-q^{-2})q^{-2r} \left((1-q^{-1}) + (1-q^{-2s})q^{-(2r+1+4rs)} \right)
\nonumber \\
&&
+(1-q^{-1}-q^{-2})q^{-\ell} \left((1-q^{-1}) + (1-q^{-2s})q^{-(\ell+1+2\ell s)} \right) \nonumber\\
&&
+
\sum_{r\geq 1}\, (1-q^{-2})q^{-(\ell+r)} \left((1-q^{-1}) + (1-q^{-2s})q^{-(\ell+r+1+2(\ell+r)s)} \right)\nonumber\\
&=&
(1-q^{-1})(1-q^{-\ell}) + (1-q^{-2})q^{-1}(1-q^{-2s}) \frac{1-q^{-2\ell-2\ell s}}{1-q^{-4-4s}} \nonumber\\
&&
+(1-2q^{-1}+q^{-3})q^{-\ell} + (1-q^{-1}-q^{-2})q^{-(2\ell+1)-2\ell s} (1-q^{-2s})\nonumber\\
&&
+
(1-q^{-2})q^{-(\ell+1)} + (1-q^{-2})q^{-(2\ell +1)-2\ell s}(1-q^{-2s}) \frac{q^{-2-2s}}{1-q^{-2-2s}} \nonumber\\
&=&
1-q^{-1} + (1-q^{-2s})\nonumber \\
&&
\times \left(\frac{(q^{-1}-q^{-3})(1-q^{-2\ell -2\ell s})}{1-q^{-4-4s}}
+ (q^{-1}-q^{-2}-q^{-3})q^{-2\ell -2\ell s} + (q^{-1}-q^{-3})\frac{q^{-2(\ell+1)-2(\ell + 1)s}}{1-q^{-2-2s}}\right)\nonumber \\
&=&
\frac{1-q^{-1+2s}}{1-q^{-4-4s}} \left(1-q^{-3}+q^{-3-2s}-q^{-4-2s} - q^{-2(\ell+1)-2\ell s}(1-q^{-2s})(1+q^{-3-2s})\right). \label{eq:pos ell}
\end{eqnarray}
We note here that \eqref{eq:pos ell} holds for $\ell = 0$ by a direct calculation. 
Hence, we obtain for even $\ell$
\begin{eqnarray} 
&&
\int_{K}\abs{d_1(k\cdot x_\ell)}^s dk \nonumber \\
&=& \label{even}
\frac{(1+q^{-3-2s})q^{\ell s}}{(1+q^{-3})(1-q^{-4-4s})} \left\{(1-q^{-4-2s})-q^{-2\ell-2\ell s}(q^{-2}-q^{-2-2s})\right\}.
\end{eqnarray}
Changing the variable from $s$ to $z$ by the relation $s = -z-1+\frac{\pi\sqrt{-1}}{2\log q}$, we obtain
\begin{eqnarray}
&&
\int_{K} \abs{d_1(k\cdot x_\ell)}^s dk \nonumber \\
&=& 
\frac{{\sqrt{-1}}^\ell q^{-\ell}(1-q^{-1+2z})}{(1+q^{-3})(1-q^{4z})}
\left\{ q^{-\ell z}(1+q^{2z-2}) - q^{\ell z}(q^{-2}+ q^{2z}) \right\}\nonumber\\
&=& \label{even n=1}
\frac{\sqrt{-1}^\ell q^{-\ell}(1-q^{-1+2z})}{(1+q^{-3})(1+q^{2z})}
\left\{ q^{-\ell z}\frac{(1+q^{-2+2z})}{1-q^{2z}} + q^{\ell z}\frac{(1+q^{-2-2z})}{1-q^{-2z}} \right\}.
\end{eqnarray}

\slit
\noindent
Assume $\ell$ is odd. Then
\begin{eqnarray*}
\lefteqn{(1+q^{-3})q^{-\ell s} \int_{K_{1,1}}\abs{d_1(k\cdot x_\ell)}^s dk}\\
&=& 
\sum_{r = 0}^{\frac{\ell-1}{2}} \int_{\pi^r\calO_{k'}} db \int_{\calO_k} dc_0 \abs{(\pi^\ell+\frac{N(b)}{2})^2-c_0^2\ve}^s +
 q^{-(\ell+1)}\int_{\calO_{k'}} db \int_{\calO_k} dc_0 \abs{\pi^{2\ell}(1+\pi\frac{N(b)}{2})^2 - c_0^2 \ve}^s\\
&=&
\sum_{r = 0}^{\frac{\ell-1}{2}} (1-q^{-2})q^{-2r} \left(\sum_{m=0}^{2r-1} (1-q^{-1})q^{-m-2ms} + q^{-2r-4rs} \right) \\ 
&&
\qquad + 
q^{-(\ell+1)} \left(  \sum_{m=0}^{\ell-1} (1-q^{-1})q^{-m-2ms} + q^{-\ell -2\ell s} \right)
\quad(\mbox{by Lemma }\ref{lem:vol} \; )\\
\\
&=&
\frac{1}{1-q^{-1-2s}}
\Big{\{} \sum_{r=0}^{\frac{\ell-1}{2}} (1-q^{-2})q^{-2r} \left(1-q^{-1}+q^{-(2r+1)-2rs}(1-q^{-2s})\right) \\
&&
\qquad + 
q^{-(\ell+1)} \left(1-q^{-1}+q^{-(\ell+1)-2\ell s}(1-q^{-2s}\right)\Big{\}} \quad(\mbox{by } (\ref{useful}) \; )\\
&=&
\frac{1}{1-q^{-1-2s}} \Big{\{} (1-q^{-1}) + (q^{-1}-q^{-3})(1-q^{-2s})\frac{1-q^{2(\ell +1)(1+s)}}{1-q^{-4-4s}} + (1-q^{-2s})q^{-2(\ell+1)-2\ell s} \Big{\}}\\
&=&
\frac{1}{1-q^{-4-4s}} \Big{\{} (1-q^{-3} +q^{-3-2s}-q^{-4-2s}) + q^{-2(\ell+1)-2\ell s}(1-q^{-2s})(1+q^{-3-2s})  \Big{\}},
\end{eqnarray*}
hence we have
\begin{eqnarray*}
\lefteqn{\int_K \abs{d_1(k\cdot x_\ell)}^s dk}\\
&=&
\frac{(1+q^{-3-2s})q^{\ell s}}{(1+q^{-3})(1-q^{-4-4s})} \Big{\{} (1-q^{-4-2s}) + q^{-2\ell-2\ell s}(q^{-2}-q^{-2-2s}) \Big{\}}\\
&=& 
\frac{{\sqrt{-1}}^\ell q^{-\ell}(1-q^{-1+2z})}{(1+q^{-3})(1-q^{4z})}
\left\{ q^{-\ell z}(1+q^{2z-2}) - q^{\ell z}(q^{-2}+ q^{2z}) \right\}.
\end{eqnarray*}
Thus, we have the same identity (\ref{even n=1}) for odd  $\ell$ also. Hence we see 
\begin{eqnarray*}
\frac{1+q^{2z}}{1-q^{-1+2z}} \cdot \omega(x, z) \in \C[q^{z} + q^{-z}], 
\end{eqnarray*}
for any $x \in X_1$, which gives the functional equation with respect to $z$ and $-z$ (i.e. $s$ and $-s-2-\frac{\pi\sqrt{-1}}{\log q}$ in $s$-variable).\qed

\slit
\noindent
{\bf 2.3.}
We assume $n \geq 2$.
In this subsection, we give the functional equation of $\omega(x;s)$ for $\tau \in W$.

\begin{thm}   \label{th: tau}
For general size $n$, the spherical function satisfies the functional equation
$$
\omega(x; z) = \frac{1-q^{-1+2z_n}}{q^{2z_n}-q^{-1}} \omega(x; \tau(z)),
$$
where $\tau(z)=(z_1, \ldots, z_{n-1}, -z_n)$.
\end{thm}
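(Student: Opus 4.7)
The plan is to reduce the $\tau$-functional equation to the $n=1$ case, settled in Proposition~\ref{explicit n=1}. The key device is the embedding $\iota : G_1 = U(j_3) \hookrightarrow G_n$ sending $g_1$ to the block-diagonal matrix with $g_1$ in the central three rows and columns (positions $n, n+1, n+2$) and $1_{n-1}$ in the two outer blocks. Writing
$$
j_{2n+1} = \begin{pmatrix} 0 & 0 & j_{n-1} \\ 0 & j_3 & 0 \\ j_{n-1} & 0 & 0 \end{pmatrix},
$$
one checks $\iota(g_1) j_{2n+1} \iota(g_1)^* = j_{2n+1}$ iff $g_1 j_3 g_1^* = j_3$, so $\iota$ lands in $G_n$ and restricts to $K_1 \hookrightarrow K_n$.

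First I would observe that for $1 \le i \le n-1$, the lower right $i \times i$ block of $y \in X$ sits in rows/columns $\{2n+2-i, \ldots, 2n+1\} \subseteq \{n+3, \ldots, 2n+1\}$, which is disjoint from $\{n, n+1, n+2\}$. Hence $d_i(\iota(k_1) \cdot y) = d_i(y)$ for $1 \le i \le n-1$. By left invariance of $dk$, insert an average over $K_1$:
$$
\omega(x; s) = \int_K \Big(\prod_{i=1}^{n-1} \abs{d_i(k\cdot x)}^{s_i}\Big) \Big(\int_{K_1} \abs{d_n(\iota(k_1) k \cdot x)}^{s_n} dk_1\Big)\, dk.
$$

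The heart of the proof is the inner integral. Writing $y = k \cdot x$ in block form with sizes $(n-1, 3, n-1)$ as $y = (Y_{ab})_{1 \le a,b \le 3}$, the Schur complement formula (valid for $y \in X^{op}$, since $Y_{33}$ is then invertible) gives
$$
d_n(\iota(k_1) y \iota(k_1)^*) = \det(Y_{33}) \cdot (k_1 \wt Y k_1^*)_{33} = d_{n-1}(y) \cdot d_1(k_1 \cdot \wt Y),
$$
where $\wt Y = Y_{22} - Y_{23} Y_{33}^{-1} Y_{32}$ is a $3 \times 3$ hermitian matrix. The main technical obstacle is to verify $\wt Y \in X_1$, that is, $\wt Y j_3 \wt Y j_3 = 1_3$ and $\Phi_{\wt Y j_3}(t) = (t^2-1)(t-1)$; I would derive this from the $3 \times 3$ block form of $y j_{2n+1} y j_{2n+1} = 1_{2n+1}$, exploiting that the central 3-dimensional subspace and its outer $(2n-2)$-dimensional complement are mutually orthogonal with respect to $j_{2n+1}$.

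Granting $\wt Y \in X_1$, the inner integral evaluates to $\abs{d_{n-1}(y)}^{s_n}$ times the $n=1$ spherical function $\omega(\wt Y; s_n)$, and substituting back yields
$$
\omega(x;s) = \int_K \Big(\prod_{i=1}^{n-2} \abs{d_i(k\cdot x)}^{s_i}\Big) \abs{d_{n-1}(k\cdot x)}^{s_{n-1}+s_n} \omega(\wt Y(k); s_n)\, dk.
$$
Applying Proposition~\ref{explicit n=1} inside the integral pulls out the factor $\tfrac{1-q^{-1+2z_n}}{q^{2z_n}-q^{-1}}$ and replaces $s_n$ by its $\tau$-image. Running the same reduction in reverse for $\tau(s)$, and using the identity $\tau(s_{n-1}) + \tau(s_n) = s_{n-1} + s_n$ (a direct consequence of $\tau(s_n) - s_n = 2z_n$ together with \eqref{change of var}), recognises the remaining integral as $\omega(x; \tau(s))$ and completes the proof.
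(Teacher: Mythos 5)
Your proposal is correct and follows essentially the same route as the paper: the same central embedding of $K_1=U(j_3)$ into $K_n$, the same factorization $d_n(\iota(k_1)\cdot y)=d_{n-1}(y)\,d_1(k_1\cdot \wt{Y})$ with $\wt{Y}\in X_1$ (this is exactly the content of Lemma~\ref{g(x,v)}, whose $D_1(k\cdot x)$ coincides with your Schur complement up to the action of $j_3\in K_1$), and the same final reduction to Proposition~\ref{explicit n=1} using the $\tau$-invariance of $s_{n-1}+s_n$. The only difference is presentational: the paper packages the Schur-complement identity as a relative $P'$-invariant statement on the enlarged space $X\times M_{31}(k')$ and defers the verification that $D_1(x)\in X_1$ to the analogue of a lemma in [HK], just as you defer the verification that $\wt{Y}\in X_1$.
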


\bigskip
We take the same strategy as in [HK].
Set 
$$
w_\tau = \begin{pmatrix}
1_{n-1} & & \\
& j_3 & \\
& & 1_{n-1}
\end{pmatrix},
$$
then the standard parabolic subgroup $P$ attached to $\tau$ is given as follows 
\begin{eqnarray} 
\lefteqn{P =  B \cup B w_\tau B } \nonumber \\
&=& \label{P-tau}
\set{
\begin{pmatrix}A &&\\& u & \\ && j_{n-1}A^{* -1}j_{n-1} \end{pmatrix}
\begin{pmatrix}1_{n-1} & \alp j_3 & B j_{n-1} \\& 1_3 & -\alp^*j_{n-1}\\ && 1_{n-1} \end{pmatrix}}
{\begin{array}{l}
A \in GL_{n-1}(k') ; \mbox{upper triangular}\\
u \in G_1= U(j_3), \alp \in M_{n-1,3}(k')\\
B \in M_{n-1}(k')\\
B + B^*  + \alp j_3\alp^*  = 0
\end{array} }.\nonumber\\
&& \label{P}
\end{eqnarray}
%
%
%
Here, $d_i(x), 1 \leq i \leq n-1$ are relative $P$-invariants, but $d_n(x)$ is not.
So we enlarge the group and space and consider the action of $P' = P \times GL_1(k')$ on $X' = X \times V$ with $V = M_{31}(k')$:
$$
(p,r) \star (x,v) = (p\cdot x, \rho(p)v r^{-1}), \quad (p,r) \in P', \; (x,v) \in X',
$$
where $\rho(p) = u \in U(j_3)$ for the decomposition of $p \in P$ as in (\ref{P-tau}).

We denote by $g_{(n+2)}$ the lower right $(n+2) \times (n+2)$-block of $g \in G (\subset GL_{2n+1})$. 
Set
\begin{eqnarray}
g(x, v) = \det\left[\twomatrixplus{v^*j_3}{}{}{1_{n-1}} \cdot x_{(n+2)}\right],
\end{eqnarray}
where the matrix inside of $[\quad]$ is of size $n$. 
Then, in the same way as in Lemma~2.5 in [HK], we see the following. 

\begin{lem}   \label{g(x,v)}
{\rm (1)} $g(x,v)$ is a relative $P'$-invariant associated by the character 
$$
P' \ni (p,r) \longmapsto N(d_{n-1}(p))N(r)^{-1} = \psi_{n-1}(p)N(r)^{-1},
$$
satisfying $g(x, v_0) = d_{n}(x)$ with $v_0 = {}^t(1,0,0)$. 

\mslit
{\rm (2)} For $x \in X^{op}$, there is $D_1(x) \in X_1$ such that
$$
g(x,v) = d_{n-1}(x) \cdot D_1(x)[v].
$$
\end{lem}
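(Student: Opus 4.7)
The plan splits by parts (1) and (2) of the statement.

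For part~(1), the explicit form of $P$ displayed in the preceding discussion shows that any $p \in P$ is block-upper-triangular for the $(n-1,3,n-1)$-decomposition, so its lower-right $(n+2)\times(n+2)$-block is
\begin{eqnarray*}
q = \twomatrix{u}{-u\alpha^*j_{n-1}}{0}{j_{n-1}A^{*-1}j_{n-1}},
\end{eqnarray*}
and block multiplication gives $(p\cdot x)_{(n+2)} = q\,x_{(n+2)}\,q^*$. Setting $M(v) = \twomatrixplus{v^*j_3}{}{}{1_{n-1}}$ and using $u^*j_3 = j_3 u^{-1}$ (equivalent to $u \in U(j_3)$), a direct computation yields the factorisation
\begin{eqnarray*}
M(\rho(p)\,v\,r^{-1})\cdot q = D\cdot M(v)\cdot N,
\end{eqnarray*}
with $D = \Diag(r^{-*}, j_{n-1}A^{*-1}j_{n-1})$ and $N = \twomatrix{1_3}{-\alpha^*j_{n-1}}{0}{1_{n-1}}$. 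Taking the $k'/k$-norm of $\det D = r^{-*}\det(A^{*-1})$ gives $N(\det D) = N(r)^{-1}\psi_{n-1}(p)$, precisely the desired character factor. The remaining point is that the unipotent conjugation $x_{(n+2)} \mapsto N x_{(n+2)} N^*$ does not alter the value: writing $x_{(n+2)} = \twomatrix{a}{b}{b^*}{c}$ in $(3,n-1)$-block form, the Schur-complement formula for a block determinant expresses both $\det(M(v)\,x_{(n+2)}\,M(v)^*)$ and $\det(M(v)\,Nx_{(n+2)}N^*\,M(v)^*)$ as $\det(c)\cdot v^*j_3(a - bc^{-1}b^*)j_3 v$, once one verifies the cancellation $a' - b'c^{-1}(b')^* = a - bc^{-1}b^*$ with $a',b'$ the $N$-conjugated blocks. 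The value $g(x,v_0)=d_n(x)$ for $v_0 = {}^t(1,0,0)$ then follows immediately, since $M(v_0)\,x_{(n+2)}\,M(v_0)^*$ is precisely the lower-right $n\times n$ block of $x$.

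For part~(2), the same Schur-complement expansion gives
\begin{eqnarray*}
g(x,v) = \det(c)\cdot v^*j_3(a - bc^{-1}b^*)j_3 v = d_{n-1}(x)\cdot v^*D_1(x)v,
\end{eqnarray*}
so I set $D_1(x) := j_3(a - bc^{-1}b^*)j_3$, which is manifestly hermitian. To see $D_1(x) \in X_1$, I exploit part~(1): specialising to $r=1$ and cancelling $d_{n-1}$ via its relative $B$-invariance yields
\begin{eqnarray*}
D_1(p\cdot x) = \rho(p)^{*-1} D_1(x) \rho(p)^{-1}, \qquad p \in P.
\end{eqnarray*}
The identity $u^{*-1}j_3 u^{-1} = j_3$ (the inverse of $uj_3u^* = j_3$) shows directly that the map $y \mapsto u^{*-1}yu^{-1}$ on $3\times 3$ hermitian matrices preserves both $yj_3\,yj_3 = 1_3$ and the characteristic polynomial of $yj_3$, hence preserves $X_1$. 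Since $D_1(1_{2n+1}) = j_3\cdot 1_3\cdot j_3 = 1_3 \in X_1$, the property $D_1(x) \in X_1$ holds on the entire $P$-orbit of $1_{2n+1}$; because $X^{op}(\overline{k})$ is a single $B(\overline{k})$-orbit and $B \subseteq P$, that orbit is Zariski dense in $X^{op}$, and the closed locus $\{x \in X^{op} : D_1(x) \in X_1\}$ must therefore be all of $X^{op}$.

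The main obstacle is the unipotent-invariance check in part~(1): expanding $(b - \alpha^*j_{n-1}c)c^{-1}(b^* - cj_{n-1}\alpha)$ produces four cross-terms that must cancel exactly against $a' = a - \alpha^*j_{n-1}b^* - bj_{n-1}\alpha + \alpha^*j_{n-1}cj_{n-1}\alpha$. The cancellation is clean, but this is the only step requiring genuine calculation; everything else is formal block-matrix bookkeeping or a direct consequence of the equivariance established in~(1).
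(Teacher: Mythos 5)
Your proof is correct, and it is worth noting that the paper itself gives no argument for this lemma at all: it is stated with the remark that it follows ``in the same way as in Lemma~2.5 in [HK]'', so your write-up supplies precisely the computation the paper delegates to the earlier (even-size) paper, and the route you take — block decomposition of $P$ through \eqref{P-tau}, the factorisation $M(\rho(p)vr^{-1})q = DM(v)N$, the Schur complement, and equivariance plus the dense $B(\ol{k})$-orbit — is the natural adaptation of that argument. Two small streamlinings. First, the Schur-complement step in part (1) tacitly assumes $\det c = d_{n-1}(x)\neq 0$; this is harmless (both sides of the invariance are polynomial in the entries of $x$ and $v$, so the identity extends from the dense locus $d_{n-1}\neq0$), but it can be avoided altogether: one checks
\begin{equation*}
M(v)N \;=\; N'M(v), \qquad N' = \begin{pmatrix} 1 & -v^*j_3\alpha^*j_{n-1} \\ 0 & 1_{n-1}\end{pmatrix},
\end{equation*}
with $N'$ an $n\times n$ unipotent matrix, whence $\det\left[M(v)\,Nx_{(n+2)}N^*\,M(v)^*\right] = \det\left[M(v)\,x_{(n+2)}\,M(v)^*\right]$ with no inversion of $c$ and no cross-term cancellation to verify. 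Second, the closing density argument in part (2) can be stated more directly: since $X^{op}(\ol{k})$ is a single $B(\ol{k})$-orbit containing $1_{2n+1}$, every $x\in X^{op}(\ol{k})$ equals $p\cdot 1_{2n+1}$, and your equivariance $D_1(p\cdot x)=\rho(p)^{*-1}D_1(x)\rho(p)^{-1}$ together with $D_1(1_{2n+1})=1_3$ gives $D_1(x)=ww^* = w\cdot 1_3 \in X_1(\ol{k})$ with $w=\rho(p)^{*-1}\in U(j_3)(\ol{k})$; as $D_1(x)$ is hermitian with entries in $k'$, membership in $X_1$ follows, with no separate appeal to closedness of the locus needed.
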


\bigskip
By using the embedding
\begin{eqnarray*}
K_1 = U(j_3) \hookrightarrow K = K_n, \; h \longmapsto \wt{h} = 
\begin{pmatrix}
1_{n-1} & & \\
& h & \\
& & 1_{n-1}
\end{pmatrix},
\end{eqnarray*}
we obtain the following identity 
\begin{eqnarray*}
\omega(x;s) &=& 
\int_{K}\, \prod_{i=1}^{n-2}\, \abs{d_i(k\cdot x)}^{s_i} \cdot \abs{d_{n-1}(k\cdot x)}^{s_{n-1}+s_n } \cdot \omega^{(1)}(D_1(k\cdot x); s_n) dk, 
\end{eqnarray*}
where $\omega^{(1)}(y; s_n)$ is the spherical function of size $n=1$. 
Then, by Proposition~\ref{explicit n=1}, we have
\begin{eqnarray*}
\lefteqn{\frac{1+q^{2z_n}}{1-q^{-1+2z_n}} \times \omega(x;z) =
\frac{1+q^{2z_n}}{1-q^{-1+2z_n}} \times \omega(x;s) }\\ 
&=& 
\int_{K}\, \prod_{i=1}^{n-1}\, \abs{d_i(k\cdot x)}^{s_i} \abs{d_{n-1}(k\cdot x)}^{s_n} \frac{1+q^{2z_n}}{q^{2z_n}-q^{-1}} \cdot \omega^{(1)}(D_1(k\cdot x); -s_n-2+\frac{\pi\sqrt{-1}}{\log q}) dk \nonumber \\
&=&
\frac{1+q^{-2z_n}}{1-q^{-1-2z_n}} \times 
\omega(x; s_1, \ldots, s_{n-2}, s_{n-1}+2s_n +2 + \frac{\pi\sqrt{-1}}{\log q}, -s_n-2+\frac{\pi\sqrt{-1}}{\log q})\\
&=&
\frac{1+q^{2z_n}}{q^{2z_n}-q^{-1}} \times 
\omega(x; \tau(z)),
\end{eqnarray*}
which completes the proof of Theorem~\ref{th: tau}.
\qed

\slit
\noindent
{\bf 2.4.}
We prepare some notation. Set
\begin{eqnarray*}
&&
\Sigma = \set{\pm e_i \pm e_j, \; 2e_i}{1 \leq i, j \leq n, \; i \ne j},\\
&&
\Sigma^+ = \set{e_i + e_j,  \; e_i - e_j}{1 \leq i < j \leq n} \cup \set{2e_i}{1 \leq i \leq n}
\end{eqnarray*}
where $e_i$ is the $i$-th unit vector in $\Z^n, \; 1 \leq i \leq n$. We note here that $\Sigma \cup \set{e_i}{1 \leq i \leq n}$ is the set of roots of $G$. 
We consider the pairing
\begin{eqnarray*}
\pair{t}{z} = \sum_{i=1}^n t_iz_i, \quad (t,z) \in \Z^n \times \C^n,
\end{eqnarray*}
which satisfies
\begin{eqnarray*}
\pair{\alp}{z} = \pair{\sigma(\alp)}{\sigma(z)}, \quad (\alp \in \Sigma, \; z \in \C^n, \; \sigma \in W).
\end{eqnarray*}

Then the following two theorems are proved in the same way as in \cite[Theorem 2.6]{HK} and \cite[Theorem 2.7]{HK}, based on Theorem~\ref{th: feq Sn} and Theorem~\ref{th: tau}.

\bigskip
\begin{thm} \label{th: feq}
The spherical function $\omega(x;z)$ satisfies the following functional equation
\begin{eqnarray} \label{Gamma-sigma}
\omega(x; z) = \Gamma_\sigma(z) \cdot \omega(x; \sigma(z)),
\end{eqnarray}
where 
\begin{eqnarray*} \label{sigma-factors}
\Gamma_\sigma(z) = \dprod{\alp \in \Sigma^+(\sigma)}\,
\frac{1 - q^{\pair{\alp}{z}-1}}{q^{\pair{\alp}{z}} - q^{-1}}, \quad
\Sigma^+(\sigma) = \set{\alp \in \Sigma^+_s}{-\sigma(\alp) \in \Sigma^+}. 
\end{eqnarray*}
\end{thm}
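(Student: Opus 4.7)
The plan is to deduce the functional equation for a general $\sigma \in W$ by combining the two already-established building blocks: Theorem~\ref{th: feq Sn}, which gives the $S_n$-invariance of $G_1(z)\,\omega(x;z)$, and Theorem~\ref{th: tau}, which handles the generator $\tau$. Since $W \cong S_n \ltimes C_2^n$ is generated by the simple transpositions of $S_n$ together with $\tau$, every $\sigma \in W$ admits a reduced decomposition in these generators, and the desired functional equation can be built up from those of the generators via the cocycle identity $\Gamma_{\sigma\sigma'}(z) = \Gamma_\sigma(\sigma'(z))\,\Gamma_{\sigma'}(z)$, which follows directly from iterating $\omega(x;z) = \Gamma_\sigma(z)\,\omega(x;\sigma(z))$.

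First I would recast Theorem~\ref{th: feq Sn} pointwise as $\omega(x;z) = (G_1(\sigma(z))/G_1(z))\,\omega(x;\sigma(z))$ for $\sigma \in S_n$, and verify by a direct manipulation that for a simple transposition $s_i = (i,\,i{+}1)$ the ratio $G_1(\sigma(z))/G_1(z)$ simplifies to $\dfrac{1-q^{\pair{\alp}{z}-1}}{q^{\pair{\alp}{z}}-q^{-1}}$ with $\alp = e_i - e_{i+1}$, the unique short positive root inverted by $s_i$. Only the $G_1$-factor indexed by the pair $(i,i{+}1)$ is affected by $s_i$ (the remaining pairs are merely permuted), and an algebraic check confirms that this single-factor ratio collapses to the stated form. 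Iterating along a reduced decomposition of $\sigma \in S_n$ and applying the cocycle identity then yields the product over $\set{\alp \in \Sigma^+_s}{-\sigma(\alp) \in \Sigma^+}$, which for permutations exhausts the full inversion set (permutations never invert the roots $2e_i$ or flip $e_i+e_j$ to a negative root).

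For a general $\sigma \in W$, I would process each occurrence of $\tau$ in a reduced word via Theorem~\ref{th: tau}, whose factor $(1-q^{-1+2z_n})/(q^{2z_n}-q^{-1})$ is already in the required form for the root $2e_n$. By induction on $\ell(\sigma)$ and the standard Weyl-group fact that right-multiplication by a simple reflection adds exactly one positive root to the inversion set, the accumulated product is precisely a product over $\Sigma^+(\sigma)$ of factors of the claimed shape. Independence from the choice of reduced expression is automatic once each factor is recorded intrinsically in terms of its inverted root, since then only the inversion set $\Sigma^+(\sigma)$ (an invariant of $\sigma$) enters.

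The main obstacle is the atomic $G_1$-ratio calculation in the $S_n$-step: the identification $G_1(s_i(z))/G_1(z) = (1-q^{\pair{\alp}{z}-1})/(q^{\pair{\alp}{z}}-q^{-1})$ is a small but slightly delicate algebraic rearrangement of $(1 + q^{z_i-z_{i+1}})$ and $(1 - q^{z_i-z_{i+1}-1})$ factors, and must be done carefully. A secondary technical point is to check compatibility of the $\tau$-factor with $S_n$-factors when conjugating a $\tau$ across adjacent $s_i$'s in a reduced word, so that the final product depends only on the inversion set and not the word; here one uses that Theorem~\ref{th: tau}'s factor transforms correctly under conjugation because $\pair{\alp}{z}$ is $W$-equivariant. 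The overall scheme mirrors \cite[Theorem 2.6]{HK}, with the odd-size-specific $\tau$-factor of Theorem~\ref{th: tau} replacing its even-size counterpart.
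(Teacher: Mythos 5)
Your proposal is correct and takes essentially the same route as the paper: the paper's proof of Theorem~\ref{th: feq} is precisely to combine Theorem~\ref{th: feq Sn} and Theorem~\ref{th: tau} along a decomposition of $\sigma$ into the generators of $W$, following \cite[Theorem 2.6]{HK}. One remark: your argument yields the product over \emph{all} inverted positive roots, long roots $2e_i$ included, which is indeed the intended statement here (it is what consistency with Theorem~\ref{th: tau} forces, and what Theorem~2 of the Introduction asserts for $m=2n+1$), so the ``$\Sigma^+_s$'' appearing in the printed definition of $\Sigma^+(\sigma)$ should be read as ``$\Sigma^+$''.
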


\bigskip
\begin{thm} \label{th: W-inv} 
The function $G(z) \cdot \omega(x; z)$ is holomorphic
 on $\C^n$ 
and $W$-invariant. In particular it is an element in $\C[q^{\pm z_1}, \ldots, q^{\pm z_n}]^{W}$, where
\begin{eqnarray*}
G(z) = \prod_{\alp \in \Sigma^+}\, \frac{1 + q^{\pair{\alp}{z}}}{1 - q^{\pair{\alp}{z}-1}}.
\end{eqnarray*}
\end{thm}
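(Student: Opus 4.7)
The plan is to derive Theorem \ref{th: W-inv} from the functional equation in Theorem \ref{th: feq}, in parallel with the treatment of the even-size case in \cite[Theorem 2.7]{HK}. Two things have to be shown: that $G(z)\,\omega(x;z)$ is $W$-invariant, and that it is holomorphic on all of $\C^n$.

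For $W$-invariance, I would argue as follows. Since $W$ is generated by the simple reflections (the transpositions $s_1,\dots,s_{n-1}$ inside $S_n$ together with $\tau$), it suffices to check
$$
\frac{G(\sigma(z))}{G(z)}\;=\;\Gamma_\sigma(z)
$$
for each simple reflection $\sigma$. For such a $\sigma$, the set $\Sigma^+(\sigma)$ reduces to the single simple root $\alpha_\sigma$, and every other factor of $G$ indexed by $\beta \in \Sigma^+\setminus\{\alpha_\sigma\}$ cancels in the ratio under the involution $\beta\leftrightarrow\sigma(\beta)$, which permutes $\Sigma^+\setminus\{\alpha_\sigma\}$. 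The only surviving identity, with $x=\pair{\alpha_\sigma}{z}$, is
$$
\frac{(1+q^{-x})(1-q^{x-1})}{(1+q^{x})(1-q^{-x-1})}\;=\;\frac{1-q^{x-1}}{q^{x}-q^{-1}},
$$
which is immediate after multiplying the left-hand numerator and denominator by $q^{x+1}$. Combining with Theorem \ref{th: feq} yields $G(\sigma(z))\omega(x;\sigma(z))=G(z)\omega(x;z)$ for every simple, hence every, $\sigma\in W$.

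For holomorphy, $\omega(x;z)$ is a rational function in $q^{z_1},\ldots,q^{z_n}$ because the defining integral (\ref{def-sph}) converges absolutely on the cone $\real(s_i)\ge 0$ and admits an analytic continuation of this form. Following the Iwasawa-integral analysis of \cite{HK}, the denominator of $\omega(x;z)$ is a divisor of $\prod_{\alpha\in\Sigma^+}(1-q^{\pair{\alpha}{z}-1})$, which is exactly the denominator appearing in $G(z)$. Thus in the product $G(z)\omega(x;z)$ every pole along a hyperplane $\pair{\alpha}{z}\in 1+\tfrac{2\pi\sqrt{-1}}{\log q}\Z$ is cancelled; combined with the $W$-invariance just established (which forces the pole locus of the product to be $W$-invariant), this rules out any remaining poles and leaves a Laurent polynomial in $\C[q^{\pm z_1},\ldots,q^{\pm z_n}]^W$.

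The main obstacle is the bound on the denominator of $\omega(x;z)$. I would establish this by expanding the integral over $K$ through the Iwasawa decomposition $G=BK$, exactly as in \cite{HK}; compared to the even case, the only new contribution comes from the long simple root $2e_n$, but the rank-one computation in Proposition \ref{explicit n=1} already exhibits the expected denominator $(1-q^{-1+2z})$, and the rank-one reduction used to prove Theorem \ref{th: tau} in \S2.3 localizes the general case to this situation together with the $S_n$-invariance of Theorem \ref{th: feq Sn}. Once the denominator bound is in place, the rest of the argument is formal.
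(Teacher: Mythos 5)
Your treatment of $W$-invariance is correct and is the route the paper itself takes (by reference to \cite{HK}): one verifies $G(\sigma(z))/G(z)=\Gamma_\sigma(z)$ on the simple reflections, using that a simple reflection permutes $\Sigma^+\setminus\{\alp_\sigma\}$, and combines this with Theorem \ref{th: feq}; your ratio identity is exactly the required computation, and for $\sigma=\tau$ it reproduces Theorem \ref{th: tau}.

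The holomorphy half, however, has a genuine gap: you have inverted the pole/zero structure of $\omega(x;z)$. You claim that the denominator of $\omega(x;z)$ divides $\prod_{\alp\in\Sigma^+}(1-q^{\pair{\alp}{z}-1})$ and that multiplying by $G(z)$ then ``cancels'' the poles along $q^{\pair{\alp}{z}}=q$. This cannot work even formally: $G(z)$ has poles, not zeros, along those hyperplanes, and a product of two functions with poles on the same divisor has worse poles there, not better ones --- cancellation requires zeros against poles. Moreover the claimed denominator bound is false, as the paper's own formulas show. In Proposition \ref{explicit n=1} the factor $1-q^{-1+2z}$ occurs in the \emph{numerator} of $\omega(x_\ell;z)$, while the denominator is $1+q^{2z}$; your statement that this proposition ``exhibits the expected denominator $(1-q^{-1+2z})$'' is exactly backwards. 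Likewise, Theorem \ref{th: explicit} gives $G(z)\,\omega(x_\lam;z)=c\,q^{\pair{\lam}{z_0}}Q_\lam(z)$ with $Q_\lam$ a Laurent polynomial, so $\omega(x_\lam;z)$ has its poles along $q^{\pair{\alp}{z}}=-1$ (the zero set of the numerator of $G$) and carries the factors $1-q^{\pair{\alp}{z}-1}$ in its numerator.

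The argument can be repaired, but the substantive step is then the correctly oriented denominator bound: the polar divisor of $\omega(x;z)$ is contained in $\bigcup_{\alp\in\Sigma^+}\{q^{\pair{\alp}{z}}=-1\}$, with multiplicities cleared by the numerator $\prod_{\alp}(1+q^{\pair{\alp}{z}})$ of $G(z)$. Establishing this is where the real work lies: it comes from the hermitian reduction behind Theorem \ref{th: feq Sn}, from Proposition \ref{explicit n=1} via the reduction of \S 2.3 (this is what handles the long roots $2e_i$, for which no $W$-symmetry argument can substitute, since the $W$-orbit of a long-root hyperplane consists of long-root hyperplanes), and from the functional equations; this is the content of the proof in \cite{HK} that the paper invokes. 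Once that bound is in place, the possible poles of $G(z)\,\omega(x;z)$ lie only along $q^{\pair{\alp}{z}}=q$, i.e.\ along the poles of $G$ itself, and here your $W$-invariance argument does finish the job in its correct form: the reflection in $\alp$ carries the hyperplane $q^{\pair{\alp}{z}}=q$ to $q^{\pair{\alp}{z}}=q^{-1}$, which lies outside the possible polar locus, so no such pole can occur; a $W$-invariant rational function in $q^{\pm z_1},\ldots,q^{\pm z_n}$ with no poles is then a $W$-invariant Laurent polynomial, as desired.
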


\vspace{2cm}
\Section{The explicit formula for $\omega(x;z)$}

{\bf 3.1.}
We give the explicit formula of $\omega(x; z)$. It suffices to show the explicit formula for each $x_\lam, \; \lam \in \Lam_n^+$ by Theorem~\ref{thm: Cartan},
since $\omega(x; z)$ is stable on each $K$-orbit.

\begin{thm} \label{th: explicit}
For $\lam \in \Lam_n^+$, one has the explicit formula
\begin{eqnarray*} 
\omega(x_\lam; z)
 &=&
\frac{(1+q^{-1})(1-q^{-2})^n}{w_{2n+1}(-q^{-1})} \cdot \frac{1}{G(z)} \cdot q^{\pair{\lam}{z_0}} \cdot Q_\lam(z),
\end{eqnarray*}
where $G(z)$ is given in Theorem~\ref{th: W-inv}, $z_0 \in \C^n$ is the value in $z$-variable corresponding to ${\bf0} \in \C^n$ in $s$-variable, 
\begin{eqnarray} 
&&
z_{0,i} = -(n-i+1)+(n-i+\frac12)\frac{\pi\sqrt{-1}}{\log q}, \quad 1 \leq i \leq n, \label{z0}\\
&&
w_m(t) = \prod_{i=1}^m (1 - t^i), \nonumber \\
&&
Q_\lam(z) = Q_\lam(z; -q^{-1}, -q^{-2}),\quad Q_\lam(z; t) = \sum_{\sigma \in W}\, \sigma\left(q^{-\pair{\lam}{z}} c(z, t) \right), \nonumber
\\
&& \label{c(z)}
c(z; t) = \prod_{\alp \in \Sigma^+}\, \frac{1 -t_\alp q^{\pair{\alp}{z}}}{1 - q^{\pair{\alp}{z}}}, \quad
t_\alp = \left\{ \begin{array}{ll} t_s = -q^{-1} & \textit{if } \alp = e_i \pm e_j\\
t_\ell = -q^{-2} & \textit{if } \alp =2e_i.  \end{array}\right. 
\end{eqnarray}

\end{thm}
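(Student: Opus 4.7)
\textit{Proof plan.} The strategy mirrors the even-size argument in \cite{HK}, leveraging Theorems~\ref{th: feq} and~\ref{th: W-inv} to reduce the explicit computation to a single ``leading term'' which is then $W$-symmetrized. By Theorem~\ref{th: W-inv}, the function $F(x;z):=G(z)\,\omega(x;z)$ lies in $\calR=\C[q^{\pm z_1},\dots,q^{\pm z_n}]^W$. Using the Cartan decomposition (Theorem~\ref{thm: Cartan}), it suffices to evaluate $F(x_\lam;z)$. Since the stated target $q^{\pair{\lam}{z_0}}Q_\lam(z)$ is manifestly $W$-invariant (the sum runs over $\sigma\in W$), the plan is to recognize $F(x_\lam;z)$ as precisely this symmetrization times the constant $c_n=(1+q^{-1})(1-q^{-2})^n/w_{2n+1}(-q^{-1})$.

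The key step is to compute the ``top term'' of $\omega(x_\lam;z)$, namely the unique summand of $Q_\lam$ indexed by $\sigma=e$, which is $q^{-\pair{\lam}{z}}\,c(z;t)$ in a specified specialization. I would carry this out by a Mackey-type decomposition of the integral over $K$ via the Iwasawa decomposition $G=BK$, restricting $k$ to the open Schubert cell (the part where $k\cdot x_\lam\in X^{op}$, cf.\ \eqref{eq:X-open}) and parametrizing by a unipotent integration. For $\lam$ deep in the dominant chamber and $\real(s_i)\gg 0$, the integrand $\prod_i|d_i(k\cdot x_\lam)|^{s_i}$ decomposes as the product of $|\bfd(x_\lam)|^s$ times local factors, one for each positive root $\alp\in\Sigma^+$. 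Each local factor is a Tate-type integral on a one-parameter subgroup, producing a geometric-series factor $(1-t_\alp q^{\pair{\alp}{z}})/(1-q^{\pair{\alp}{z}})$. The arithmetic of the unitary group $U(j_3)$-building block (the anisotropic kernel coming from the middle row/column, size $2n+1$ vs.\ $2n$) is what distinguishes the long-root specialization $t_\ell=-q^{-2}$ from the even-size case; the short-root specialization $t_s=-q^{-1}$ arises from the hermitian blocks exactly as in the $C_n$ setting.

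Once the leading term is shown to equal $\mathrm{const}\cdot q^{\pair{\lam}{z_0}-\pair{\lam}{z}}\,c(z;t)$ with $(t_s,t_\ell)=(-q^{-1},-q^{-2})$, I would invoke the functional equation (Theorem~\ref{th: feq}): for any $\sigma\in W$, $\omega(x_\lam;z)=\Gamma_\sigma(z)\,\omega(x_\lam;\sigma(z))$, and a short check (matching the factors in $G(z)/G(\sigma(z))$ against $\Gamma_\sigma$) shows that the same leading-term expression, when transported along $\sigma$, yields the summand of $Q_\lam$ indexed by $\sigma$. Combined with the fact that $G(z)\omega(x_\lam;z)$ is a \emph{holomorphic} $W$-invariant Laurent polynomial (so no additional terms can appear: all the would-be poles of the $\sigma$-symmetrization against $G(z)$ cancel), this forces
\[
G(z)\,\omega(x_\lam;z)\;=\;c_n\cdot q^{\pair{\lam}{z_0}}\cdot Q_\lam(z).
\]

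Finally, the normalization $c_n$ is pinned down by a single evaluation. Setting $\lam=\mathbf{0}$, one has $x_0=1_{2n+1}$ and $\omega(x_0;z)$ is known explicitly from \eqref{at origin}. Comparing with $c_n\,G(z)^{-1}Q_{\mathbf{0}}(z)$ requires the combinatorial identity
\[
Q_{\mathbf{0}}(z)\;=\;\frac{w_n(-q^{-1})\,w_{n+1}(-q^{-1})}{(1+q^{-1})^{n+1}}\cdot G(z)\cdot \omega(x_0;z)/c_n,
\]
i.e.\ the constant-term identity for the $W$-sum of $c(z;t)$ at the specialization $(t_s,t_\ell)=(-q^{-1},-q^{-2})$, which is a type-$C_n$ Macdonald-Gindikin-type identity (see \cite[Appendix C]{HK}). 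I expect the main technical obstacle to be the direct computation of the leading term in the second step: the unipotent integration splits cleanly along short roots but the long-root contribution crosses the middle $U(j_3)$ block, and tracking the precise $-q^{-2}$ specialization there (rather than $q^{-1}$ as in the even case) requires the explicit description of $K_1$ in Lemma~\ref{explicit K1} together with the $n=1$ computation of Proposition~\ref{explicit n=1}, which serves as the base case of an induction on $n$.
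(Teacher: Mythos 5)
Your plan has the right global architecture --- leading term, functional equations, $W$-symmetrization, normalization --- but it is missing the one ingredient the paper actually has to work for, and two of your steps fail as stated. The paper does not compute the integral by a root-by-root unipotent (Gindikin--Karpelevich-type) factorization. It invokes the general expression formula of \cite[\S 2]{French}, applied to the $2^n$ finer functions $\omega_u$ attached to the $B$-orbit decomposition $X^{op}=\sqcup_{u}X_u$, $u\in(\Z/2\Z)^n$; in that formula the root factors come from Cartier's $c$-function $\gamma(z)$ of the group $U(j_{2n+1})$ (cf.\ \cite[Theorem 4.4]{Car}) together with the matrices $B(\sigma,z)$ supplied by Theorem~\ref{th: feq}, and the only ``direct'' integration is the easy Iwahori integral $\delta_u(x_\lam;z)=c_\lam q^{-\pair{\lam}{z}}$. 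Crucially, the applicability of that formula is not free: one must verify condition (A3), that for every boundary point $y\in X\setminus X^{op}$ some character in $\langle\psi_1,\dots,\psi_n\rangle$ is nontrivial on the identity component of the stabilizer $B_y$. This is what rules out extra eigendistributions supported on boundary $B$-orbits, i.e.\ exactly what would license your claim that ``no additional terms can appear,'' and its verification is the genuinely new work of the paper (all of \S 3.2, Lemmas~\ref{Lsim3-4}--\ref{Lsim3-6} and the residual case $r=n$). Your substitute --- holomorphy of $G(z)\,\omega(x_\lam;z)$ plus one leading term --- is not sufficient: knowing $G(z)\omega\in\calR$ and one asymptotic summand does not exclude a $W$-invariant correction. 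What is needed is the structural fact that $\omega(x_\lam;z)=\sum_{\sigma\in W}\gamma_\sigma(z)\,q^{-\pair{\lam}{\sigma(z)}}$ with coefficients \emph{independent of} $\lam$, and that is precisely what the general formula, under (A3), provides; your proposal never addresses boundary orbits at all.

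Two further concrete problems. First, your key computation is asserted rather than derived: the claim that the unipotent integral splits into rank-one Tate integrals, one per positive root, producing exactly the specializations $t_s=-q^{-1}$ and $t_\ell=-q^{-2}$, is the whole difficulty ($X$ is not a group, and the long-root contribution through the middle $3\times 3$ block is exactly where the odd case diverges from the even one); the induction on $n$ you invoke, with Proposition~\ref{explicit n=1} as base case, is never set up, and the paper does not argue this way --- Proposition~\ref{explicit n=1} enters only through the proof of the functional equation in Theorem~\ref{th: tau}. Second, your normalization is circular: \eqref{at origin}, i.e.\ Corollary~\ref{cor}, is deduced \emph{from} Theorem~\ref{th: explicit} via Remark~\ref{Rem 1}, so it cannot be used to pin down the constant $c_n$. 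The paper instead fixes the constant by the a priori identity $\omega(x;s)\vert_{s=0}=1$ together with $P_{\bf 0}(z)=1$ (Macdonald's Poincar\'e-series identity for $\sum_{\sigma\in W}\sigma(c(z;t))$); your last step can be repaired the same way.
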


\medskip
\begin{rem} \label{Rem 1}{\rm
We see the main part $Q_\lam(z)$ of $\omega(x; z)$ is contained in $\calR = \C[q^{\pm z_1}, \ldots, q^{\pm z_n}]^W$ by Theorem~\ref{th: W-inv}, and 
\begin{eqnarray} \label{P-Q}
P_\lam(z; t_s, t_\ell) = \frac{1}{W_\lam(t_s, t_\ell)} \cdot Q_\lam(z; t_s, t_\ell)
\end{eqnarray}
is a specialization of Hall-Littlewood polynomial of type $C_n$, where $W_\lam(t_s, t_\ell)$ is the Poincar{\' e} polynomial of the stabilizer subgroup $W_\lam$ of $W$ at $\lam$, and  
\begin{eqnarray} \label{w-lam}
&&
W_\lam(-q^{-1}, -q^{-2}) = \frac{\wt{w_\lam}(-q^{-1})}{(1+q^{-1})^{n+1}}, \\
&&
\wt{w_{\lam}}(t) = \left\{\begin{array}{ll}
w_{m_0(\lam)+1}(t) \cdot  \prod_{\ell \geq 0}\, w_{m_\ell(\lam)}(t) & \mbox{if }m_0 > 0 \\[2mm]
\prod_{\ell \geq 1}\, w_{m_\ell(\lam)}(t) & \mbox{if }m_0 = 0, 
\end{array} \right.
\quad 
m_\ell(\lam) = \sharp\set{i}{\lam_i = \ell}. \nonumber
\end{eqnarray}
Using $P_\lam(z) = P_\lam(z; -q^{-1}, -q^{-2})$ we have
\begin{eqnarray}
\omega(x_\lam; z) = \frac{(1-q^{-1})^n}{w_{2n+1}(-q^{-1})}\cdot \frac{1}{G(z)} \cdot \wt{w_\lam}(-q^{-1})\cdot q^{\pair{\lam}{z_0}}\cdot P_\lam(z), \quad (\lam \in \Lam_n^+).
\end{eqnarray}
It is known (cf. \cite{Mac}, \cite[Appendix B]{HK}) that the set 
$\set{P_\lam(z; t_s, t_\ell)}{\lam \in \Lam_n^+}$ forms an orthogonal $\C$-basis for $\calR$, in particular $P_{\bf0}(z) = 1$ (cf. (\ref{inner prod}) and  (\ref{P-lam-mu})). 
In the present case, the root system of the group $G = U(j_{2n+1})$ is of type $BC_n$, but we can write the explicit formula for $\omega(x;z)$ as above by using $P_\lam$ of type $C_n$. We need a different specialization from the case of unitary hermitian forms of even size, which is a homogeneous space of the group $U(j_{2n})$, where $(t_s, t_\ell) = (-q^{-1}, q^{-1})$  (cf. [HK]). 
} 
\end{rem} 

\begin{rem} \label{Rem 2}
{\rm
We give an interpretation of the constant $z_0$.
For $v\in\mathbb{Z}^n$, let
\begin{equation}
\label{eq:height}
  \mathbf{t}^{\Ht(v)}=\prod_{\beta\in\Sigma_s^+}t_s^{\langle v,\beta^\vee\rangle/2}
\prod_{\beta\in\Sigma_\ell^+}t_\ell^{\langle v,\beta^\vee\rangle/2},
\end{equation}
where $\beta^\vee=2\beta/\langle\beta,\beta\rangle$.
Then
for $v=\alpha\in\Sigma$, this coincides with the generalization of the heights of roots \cite{Mac2}.
On the other hand, \eqref{eq:height} can be rewritten as
\begin{equation*}
  \mathbf{t}^{\Ht(v)}=q^{\langle v,z_0\rangle},
\end{equation*}
where $z_0$ is given by \eqref{z0}. Thus $z_0$ can be regarded as a generalization of the dual Weyl vector.

From this viewpoint, the constant $z_0$ in the even case is calculated as
\begin{equation*}
  z_{0,i} = -(n-i+\frac12)+(n-i)\frac{\pi\sqrt{-1}}{\log q}, \quad 1 \leq i \leq n,
\end{equation*}
which corresponds to the change of variables
\begin{equation}
\label{eq:mod}
  s_n = -z_n -\frac12
\end{equation}
in \eqref{eq:chgv} with the same $s_i$ ($1\leq i\leq n-1$) as before.
This modification causes only the sign changes of $\omega(x_\lambda;z)$, that is,
$\omega(x_\lambda;z)$ with \eqref{eq:mod} 
 is the multiple by $(-1)^{|\lambda|}$  of the original $\omega(x_\lambda;z)$. In other words, on $G \cdot x_0$ the both coincide and on $G\cdot x_1$ the difference is the multiple by $-1$.
}
\end{rem}

\bigskip
We prove Theorem~\ref{th: explicit} by using a general expression formula of spherical functions given in \cite[\S 2]{French}. We have to check some assumptions, and it is easy to see them except the following condition (A3): 
  
\medskip
(A3) : \begin{minipage}[t]{14cm}
For any $y \in X \backslash X^{op}$, there exists some $\psi \in \langle{\psi_i \mid 1 \leq i \leq n}\rangle$ which is not trivial on the identity component of the stabilizer $B_y$.
\end{minipage}

\bigskip
\noindent
In the above statement, the character $\psi_i$ is given in \eqref{eq:rel inv, char} and the set $X^{op}$ is given in \eqref{eq:X-open}.
We admit (A3), which will be proved in \S 3.2, and prove the above theorem. According to the $B$-orbit decomposition
\begin{eqnarray*}
&&
X^{op} = \sqcup_{u \in \calU}\, X_u, \quad \calU = (\Z/2\Z)^n\\
&&
X_u = \set{x \in X^{op}}{ v_\pi(d_i(x)) \equiv u_1+\cdots +u_i \pmod{2}, \; 1 \leq i \leq n},
\end{eqnarray*}
we consider finer spherical functions
\begin{eqnarray*}
\omega_u(x; s) = \int_{K} \abs{\bfd(x)}_u^s dk, \quad \abs{\bfd(y)}_u^s = \left\{\begin{array}{ll}
\abs{\bfd(y)}^s & \mbox{if}\; y \in X_u,\\
0 & \mbox{otherwise}.
\end{array} \right.
\end{eqnarray*}
Then for each $\lam \in \Lam_n^+$ and generic $z$ we have the following identity:
\begin{eqnarray}
\left(\omega_u(x_\lam; z)\right)_{u \in \calU} = 
\frac{1}{Q} \sum_{\sigma \in W} \gamma(\sigma(z)) B(\sigma, z) \left(\delta_u(x;z) \right)_{u \in \calU},
\end{eqnarray}
where 
\begin{eqnarray*}
&&
Q = \sum_{\sigma \in W}[U\sigma U : U] \quad (U \; \mbox{is the Iwahori subgroup of $K$ associated with $B$}),\\ 
&&
\gamma(z) = \prod_{{\scriptsize \begin{array}{c}\alp = e_i \pm e_j\\1 \leq i < j \leq n\end{array}}}\,
\frac{1-q^{-2+2\pair{\alp}{z}}}{1-q^{2\pair{\alp}{z}}} \cdot 
\prod_{i=1}^{n} \frac{(1+q^{-2+2z_i})(1-q^{-1+2z_i}) }{1-q^{4z_i}},\\
&&
\delta_u(x_\lam;z) = \int_{U}\, \abs{\bfd(\nu \cdot x_\lam)}_u^s d\nu = \left\{\begin{array}{ll}
c_\lam q^{-\pair{\lam}{z}} & \mbox{if} \; x_\lam \in X_u\\
0 & \mbox{otherwise}, \end{array} \right.
\end{eqnarray*}
and $B(\sigma, z)$ is determined by the functional equation
\begin{eqnarray*}
\left(\omega_u(x_\lam; z)\right)_{u \in \calU} = B(\sigma, z) \left(\omega_u(x_\lam; \sigma(z))\right)_{u \in \calU},
\end{eqnarray*}
hence $B(\sigma, z)$ can be obtained by Theorem~\ref{th: feq}. 
We note here that $Q$ and $\gamma(z)$ are determined for the group $U(j_{2n+1})$ (cf. \cite[Theorem~4.4]{Car}) and $\gamma(z)$ coincides with $c(\lam)$ there for the character $\lam(p) = (-1)^{v_\pi(p_1\cdots p_n)} \prod_{i=1}^n \abs{N(p_i)}^{z_i}$, where $p_i$ is the $i$-th diagonal entry of $p \in B$. We don't need to calculate the constant $Q$ in advance, since it is determined by the property $\omega(x; s) \vert_{s = 0}\, = 1$ and $P_{\bf0}(z) = 1$.  
Thus, we obtain the explicit expression for $\omega(x_\lam,z) = \sum_{u \in \calU} \omega_u(x_\lam;z)$ in the similar line as in \cite[\S 3.1]{HK}.
\qed

\bigskip
We have the following immediately from Remark~\ref{Rem 1}. 

\begin{cor} \label{cor}
For $x_{\bf0} = 1_{2n+1}$, one has
$$
\omega(1_{2n+1}; z) = 
\frac{(1-q^{-1})^n w_n(-q^{-1}) w_{n+1}(-q^{-1})}{w_{2n+1}(-q^{-1})} \times
\frac{1}{G(z)}.
$$
\end{cor}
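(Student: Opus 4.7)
\proof
The plan is to specialize Theorem~\ref{th: explicit} (via the reformulation in Remark~\ref{Rem 1}) to $\lam = \mathbf{0}$. With that substitution, three simplifications occur simultaneously: $q^{\pair{\mathbf{0}}{z_0}} = 1$; the Hall--Littlewood polynomial $P_{\mathbf{0}}(z)$ equals $1$; and the prefactor $\wt{w_\lam}(-q^{-1})$ becomes an explicit product, which I would compute next.

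For $\lam = \mathbf{0}$ we have $m_0(\mathbf{0}) = n$ and $m_\ell(\mathbf{0}) = 0$ for all $\ell \geq 1$. Since $m_0 > 0$, the first branch of the definition of $\wt{w_\lam}$ in Remark~\ref{Rem 1} applies, giving
$$
\wt{w_{\mathbf{0}}}(t) = w_{m_0(\mathbf{0}) +1}(t) \cdot \prod_{\ell \geq 0}\, w_{m_\ell(\mathbf{0})}(t) = w_{n+1}(t) \cdot w_n(t),
$$
where we used $w_0(t) = 1$ (empty product) to discard all factors with $\ell \geq 1$. Substituting $t = -q^{-1}$ yields $\wt{w_{\mathbf{0}}}(-q^{-1}) = w_{n+1}(-q^{-1}) \cdot w_n(-q^{-1})$.

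Finally, inserting these three pieces into the reformulation
$$
\omega(x_\lam; z) = \frac{(1-q^{-1})^n}{w_{2n+1}(-q^{-1})}\cdot \frac{1}{G(z)} \cdot \wt{w_\lam}(-q^{-1})\cdot q^{\pair{\lam}{z_0}}\cdot P_\lam(z)
$$
from Remark~\ref{Rem 1} at $\lam = \mathbf{0}$ produces exactly
$$
\omega(1_{2n+1}; z) = \frac{(1-q^{-1})^n\, w_n(-q^{-1})\, w_{n+1}(-q^{-1})}{w_{2n+1}(-q^{-1})} \cdot \frac{1}{G(z)}.
$$
There is no genuine obstacle here; the only step requiring care is the interpretation of the $\prod_{\ell \geq 0}$ in the definition of $\wt{w_\lam}$, where one must remember that $w_0(t)$ is the empty product $1$ so that only the $\ell = 0$ factor survives besides $w_{m_0+1}$. \qed
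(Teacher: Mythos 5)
Your proof is correct and follows exactly the paper's route: the paper derives this corollary ``immediately from Remark~\ref{Rem 1}'' by setting $\lam = \mathbf{0}$, using $P_{\mathbf{0}}(z)=1$ and $q^{\pair{\mathbf{0}}{z_0}}=1$, just as you do. Your evaluation $\wt{w_{\mathbf{0}}}(-q^{-1}) = w_n(-q^{-1})\,w_{n+1}(-q^{-1})$ (keeping the $\ell=0$ factor $w_{m_0}$ alongside $w_{m_0+1}$ and discarding the empty factors $w_0=1$) is the right reading of the definition, as it is the one consistent with the Poincar\'e polynomial of the full Weyl group of type $C_n$.
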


\vspace{1cm}
\noindent
{\bf 3.2.} In this subsection, we prove that $X_n$ satisfies the condition (A3). 
We set
\begin{eqnarray}
t(\bfb) = \Diag(b_1, \ldots, b_n, 1, b_n^{-1}, \ldots, b_1^{-1}) \in B_n, \quad \mbox{for \; } \bfb = (b_1, \ldots, b_n) \in k^{\times n}.
\end{eqnarray}
Assume $n =1$ and take any $x \in X_1$ such that $d_1(x) = 0$. Then, we see
$$
B_1\cdot x \ni y = \begin{pmatrix} 0 & 0 & 1 \\ 0 & -1 & 0\\ 1 & 0 & 0 \end{pmatrix}
$$
and $t(b)$ stabilizes $y$ for any $b \in k^\times$.

\begin{lem}  \label{Lsim3-4} 
Assume $x \in X_n, \; d_1(x) \ne 0$. Then $B \cdot x$ has an element of the shape
$$
\begin{pmatrix} c & 0& 0\\ 0 & y & 0\\ 0 & 0 & c^{-1} \end{pmatrix}, \quad  \; y \in X_{n-1}\backslash {X_{n-1}}^{op}, \; c = 1, \pi.
$$
\end{lem}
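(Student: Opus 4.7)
The plan is to reduce $x$ to block-diagonal form in three stages: use the unipotent radical of $B$ to kill the off-diagonal entries of the last column; observe that the defining relations of $X_n$ then force the first row and column off the diagonal to vanish automatically; and finally normalize the surviving corner entry modulo norms.

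Set $c := x_{2n+1,2n+1} = d_1(x)$, which is nonzero by hypothesis. For $p$ in the unipotent radical $N$ of $B$ (upper triangular with ones on the diagonal), the identity $(p^*)_{l,2n+1} = \overline{p_{2n+1,l}} = 0$ for $l < 2n+1$ gives
$$
(pxp^*)_{i,2n+1} \;=\; \sum_{k=i}^{2n+1} p_{ik}\,x_{k,2n+1}, \qquad i<2n+1.
$$
Since $c$ is a pivot, I would solve this lower-triangular system recursively for $i = 2n, 2n-1, \ldots, 1$, prescribing the column $(p_{i,2n+1})_i$ at each step. Using the parametrization \eqref{eq:B}, the required values can be realized by suitable $A \in B_n$, $\beta \in (k')^n$, $C \in M_n(k')$: the entries $p_{i,2n+1}$ for $i \in [n+2,2n]$ lie in the block $j_n A^{*-1}j_n$, the entry $p_{n+1,2n+1}$ equals $-\overline{\beta_1}$, and $p_{i,2n+1} = (AC)_{i,n}$ for $i \in [1,n]$. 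The hermitian constraint $\beta\beta^* + Cj_n + j_n C^* = 0$ couples only the columns $C_{\cdot,1}$ and $C_{\cdot,n}$, so leaving $C_{\cdot,1}$ to absorb the constraint allows $C_{\cdot,n}$ to be freely prescribed.

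Once $(pxp^*)_{i,2n+1} = 0$ for $i \le 2n$, hermiticity yields the symmetric vanishing in the last row. Write the result in $1+(2n-1)+1$ blocks as $\left(\begin{smallmatrix} a & v & 0 \\ v^* & Y & 0 \\ 0 & 0 & c' \end{smallmatrix}\right)$ with $c' = N(p_{2n+1,2n+1})c \ne 0$, and decompose $j_{2n+1} = \left(\begin{smallmatrix} 0 & 0 & 1 \\ 0 & j_{2n-1} & 0 \\ 1 & 0 & 0 \end{smallmatrix}\right)$ accordingly. Expansion of $\bigl((pxp^*)j_{2n+1}\bigr)^2 = 1_{2n+1}$ then reads: the $(2,1)$ block gives $c'v^* = 0$, forcing $v = 0$; the $(1,1)$ block gives $ac' = 1$; the $(2,2)$ block gives $(Yj_{2n-1})^2 = 1_{2n-1}$. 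Moreover $\Phi_{(pxp^*)j_{2n+1}}(t) = (t^2-1)\,\Phi_{Yj_{2n-1}}(t)$, so comparing with $(t^2-1)^n(t-1)$ forces $Y \in X_{n-1}$. Finally a torus element of $B$ scales $c'$ by $N(a_1)^{-1}$ for $a_1 \in (k')^{\times}$, and since $k'/k$ is unramified, $k^{\times}/N((k')^{\times}) \cong \Z/2\Z$ with representatives $\{1,\pi\}$, so we may arrange $a = (c')^{-1} \in \{1,\pi\}$. The assertion $y \in X_{n-1}\setminus X_{n-1}^{op}$ follows from the ambient hypothesis of \S 3.2 that $x \notin X^{op}$: some $d_i(x) = 0$, which is preserved up to nonzero scaling under $B$-action, and a short calculation shows $d_i(\Diag(a,Y,c')) = c'\,d_{i-1}(Y)$ for $i\ge 2$, hence $d_j(Y) = 0$ for some $j \in [1,n-1]$.

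The main obstacle is Step~1: verifying that the defining constraint on $(A,\beta,C)$ really permits the last column of $p$ to be prescribed arbitrarily. The resolution, sketched above, is that the hermitian constraint on $C$ pairs only the first and last columns of $C$, so $C_{\cdot,1}$ can absorb it while $C_{\cdot,n}$ is chosen to hit the required targets; all subsequent steps are forced by the defining relations of $X_n$ together with the elementary structure of the norm group $N((k')^{\times})$.
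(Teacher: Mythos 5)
Your three-stage outline coincides with the paper's proof (which is stated very tersely there): clear the last column by the unipotent radical of $B$, let $x=x^*$ and $xj_{2n+1}xj_{2n+1}=1_{2n+1}$ force the block shape, and normalize the corner using $k^\times/N(k'^\times)=\{1,\pi\}$; your Step 2 block computation, the characteristic-polynomial comparison giving $Y\in X_{n-1}$, the torus normalization, and the descent of ``$x\notin X^{op}$'' to $d_j(Y)=0$ via $d_i(\Diag(a,Y,c'))=c'\,d_{i-1}(Y)$ are all correct. The problem is in Step 1, exactly at the point you yourself flag as the main obstacle: your resolution of it is false. Writing $D=Cj_n$, the constraint $\beta\beta^*+Cj_n+j_nC^*=0$ reads $D_{ik}+\overline{D_{ki}}=-\beta_i\overline{\beta_k}$, so the last column of $C$ (the first column of $D$) is coupled not to the first \emph{column} of $C$ but to the first \emph{row} of $C$, and, crucially, the entry $C_{1,n}=D_{11}$ is coupled to itself: $\mathrm{Tr}_{k'/k}(C_{1,n})=-N(\beta_1)$. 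Hence $C_{\cdot,n}$ — equivalently the last column of $p$ — cannot be freely prescribed. This is visible without any parametrization: for every $p\in G$ the last column $pe_{2n+1}$ satisfies $(pe_{2n+1})^*j_{2n+1}(pe_{2n+1})=e_{2n+1}^*j_{2n+1}e_{2n+1}=0$, i.e.\ it is isotropic, whereas a generic target column is not. So the step, as you justify it, would fail.

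What rescues the argument — and what your proof never invokes — is that the vector to be realized is itself isotropic. For unipotent upper-triangular $p$ one has $(p\cdot x)_{i,2n+1}=(pw)_i$ with $w$ the last column of $x$, so clearing the column means $pw=ce_{2n+1}$, i.e.\ exhibiting $w/c$ as the last column of an element of the unipotent radical; and the relation $xj_{2n+1}xj_{2n+1}=1_{2n+1}$ gives precisely $w^*j_{2n+1}w=0$. Running your construction with $v=w/c$ (choose $A$ so that the first column of $A^{*-1}$ is $(1,v_{2n},\ldots,v_{n+2})^t$, set $\beta_1=-\overline{v_{n+1}}$, and $C_{\cdot,n}=A^{-1}(v_1,\ldots,v_n)^t$), the one nontrivial compatibility is the trace condition above, and a short computation shows it is \emph{equivalent} to $w^*j_{2n+1}w=0$; all remaining constraints involve entries of $C$ outside the last column and are solvable, the self-paired anti-diagonal ones because $\mathrm{Tr}_{k'/k}:k'\to k$ is surjective. (Alternatively: the achievable last columns form an affine $k$-space of the same dimension as the isotropic vectors with last entry $1$, and are contained in them, hence equal.) So the lemma and your architecture are fine, but Step 1 needs the isotropy of the last column of $x$ as input — the paper silently takes this clearing step for granted, while your attempt to justify it substitutes an incorrect description of the constraint for the actual reason it works.
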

 
\proof
(This lemma is similar to \cite[Lemma~3.4]{HK}.) Since $d_1(x) \ne 0$, one can change by diagonal $(2n+1, 2n+1)$-entry of $x$ to be $1$ or $\pi^{-1}$.
Then one can change $(i, 2n+1)$-entry to be $0$ except $i = 2n+1$, and also for $(2n+1, i)$-entry. 
Then,we make all the $(i, 2n+1)$-component except $i = 2n+1$ to be $0$.
Then by the property $xj_{2n+1}xj_{2n+1} = 1_{2n+1}$, it has the shape as above.
\qed

\bigskip
The next lemma can be proved similarly to \cite[Lemma~3.5]{HK}.

\begin{lem} \label{Lsim3-5} 
Assume $x \in X_n$, $d_1(x)=0$, and the first non-zero entry in the last column from the bottom stands at $(2n-\ell+2, 2n+1)$-entry with $2 \leq \ell \leq n$. Then 
there is some $y \in B\cdot x$ which satisfies
\begin{eqnarray*}
\begin{array}{ll}
y_{1, j} = y_{j, 1} = \delta_{j, \ell}, & y_{2n+1, j} = y_{j, 2n+1} = \delta_{2n-\ell+2, j},\\
y_{\ell, j} = y_{j, \ell} = \delta_{j, 1}, & y_{2n-\ell+2, j} = y_{j, 2n-\ell+2} = \delta_{2n+1, j},
\end{array}
\end{eqnarray*}
and $B_y$ contains $t(\bfb)$ such that $b_1 = b, \; b_\ell = b^{-1}$ and other $b_j = 1$. 
\end{lem}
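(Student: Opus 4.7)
Following \cite[Lemma~3.5]{HK}, the argument has three steps. First, normalize the pivot: apply the diagonal Borel element $\Diag(A,1,j_n A^{*-1}j_n)$ with $A=\Diag(x_{2n-\ell+2,2n+1},1,\ldots,1)\in B_n$. The congruence action scales $x_{2n-\ell+2,2n+1}$ to $1$ without disturbing the hypothesis $x_{i,2n+1}=0$ for $i>2n-\ell+2$.

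Second, eliminate the remaining nonzero entries $x_{i,2n+1}$ with $i<2n-\ell+2$ by successively applying unipotent upper-triangular elements of $B$ as parametrized in \eqref{eq:B}. For each such $i$ one chooses the parameters $\beta,C$ to be supported near $(i,2n-\ell+2)$, together with a compensating contribution at the ``mirror'' position $(\ell,2n+2-i)$ forced by the quadratic constraint $\beta\beta^{*}+Cj_n+j_nC^{*}=0$; the resulting congruence subtracts $x_{i,2n+1}$ times row $2n-\ell+2$ from row $i$ and performs the dual column operation. After these eliminations the last column equals $e_{2n-\ell+2}$, and by $y^{*}=y$ the last row equals $e_{2n-\ell+2}^{T}$. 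An entirely analogous elimination procedure, now aimed at the first column with parameters supported near $(1,\ell)$, yields first column $e_\ell$ and, by hermiticity, first row $e_\ell^{T}$.

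Third, invoke the unitarity relation $(yj_{2n+1})^{2}=1_{2n+1}$. Setting $M=yj_{2n+1}$, one has $M^{2}=I$. The known shapes of the first and last rows/columns of $y$ translate into the shapes of columns $1,2n-\ell+2$ and rows $\ell,2n+1$ of $M$ via $M_{ij}=y_{i,2n+2-j}$. Using $M^{2}=I$ together with these known rows of $M$ and then applying $y^{*}=y$ forces $y_{\ell,j}=\delta_{j,1}$, $y_{2n+1,j}=\delta_{j,2n-\ell+2}$, $y_{j,\ell}=\delta_{j,1}$, and $y_{j,2n-\ell+2}=\delta_{j,2n+1}$. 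This completes all eight Kronecker-delta identities claimed in the lemma.

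For the stabilizer, observe that for $\bfb$ with $b_1=b$, $b_\ell=b^{-1}$, and all other $b_j=1$, the diagonal of $t(\bfb)$ takes value $b$ at positions $\{1,2n-\ell+2\}$, value $b^{-1}$ at positions $\{\ell,2n+1\}$, and $1$ elsewhere. Every nonzero entry of $y$ either sits at one of the four paired positions $(1,\ell),(\ell,1),(2n-\ell+2,2n+1),(2n+1,2n-\ell+2)$---where the factor product is $b\cdot b^{-1}=1$---or at an interior position $(i,j)$ with $i,j\notin\{1,\ell,2n-\ell+2,2n+1\}$, where both diagonal factors are $1$. Hence $t(\bfb)$ stabilizes $y$. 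The main obstacle is the explicit choice of unipotent Borel elements in step two: one must simultaneously achieve the desired row/column elimination and respect the quadratic constraint of \eqref{eq:B}, which is the unitary analogue of Gaussian elimination and constitutes the technical heart of the proof, carried out exactly as in the even-size case.
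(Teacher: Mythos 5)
Your skeleton (normalize the pivot, eliminate by unipotent congruence, invoke $xj_{2n+1}xj_{2n+1}=1_{2n+1}$, verify the stabilizer) is the right one, and your verification that $t(\bfb)$ stabilizes $y$ is complete and correct. The gap is in step two, which you yourself call the technical heart: as ordered, it cannot be carried out. You propose to clean the last column and then, ``entirely analogously,'' the first column, invoking unitarity only afterwards. But the hypothesis of the lemma controls only the \emph{last} column; at that stage nothing whatever is known about the first column, so there is no pivot for the ``analogous'' elimination, and operations with parameters near $(1,\ell)$ add unknown multiples of the unknown row/column $\ell$. The fact that $x_{\ell,1}=1$ --- indeed that the whole row and column $\ell$ equal $e_1$ --- is itself a \emph{consequence} of $xj_{2n+1}xj_{2n+1}=1_{2n+1}$ applied after the last column has been reduced to $e_{2n-\ell+2}$. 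So unitarity must be interleaved with the eliminations (clean the last column, deduce row/column $\ell$, clean the first row/column, deduce row/column $2n-\ell+2$), not appended at the end as your step three.

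Two further points that your uniform ``subtract $x_{i,2n+1}$ times row $2n-\ell+2$'' glosses over are exactly where the elimination is not ordinary Gaussian elimination. First, the unipotent parameter that would remove $x_{\ell,2n+1}$ sits at position $(\ell,2n-\ell+2)$, which is its own mirror under $(a,b)\mapsto(2n+2-b,2n+2-a)$; membership in $B$ then forces this parameter $c$ to satisfy $c+c^{*}=0$. Hence that entry cannot be removed by an arbitrary multiple: one must first remove all other entries of the column and then use the relation $xj_{2n+1}xj_{2n+1}=1_{2n+1}$ to see that the surviving entry is itself trace-zero, after which a trace-zero parameter kills it (the entry $x_{1,2n-\ell+2}$ requires the same treatment later). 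Second, once row/column $\ell$ equals $e_1$, parameters in column $\ell$ of the unipotent radical reach only $x_{j,1}$ with $j\leq\ell$ (the $(1,\ell)$-parameter moves nothing but the $(1,1)$-entry, since adding a multiple of $e_1$ to column $1$ changes only its first coordinate). Killing $x_{1,j}$ for the remaining indices $j>\ell$, $j\neq 2n-\ell+2,2n+1$, requires adding the \emph{unknown} middle rows of $x$ to row $1$, and one must prove this linear system is solvable; this follows from the invertibility of the principal minor of $x$ on the indices complementary to $\{1,\ell,2n-\ell+2,2n+1\}$, which is in turn a consequence of the structure already established. None of this is supplied by ``carried out exactly as in the even-size case,'' so the proposal remains an outline whose essential difficulties are still open.
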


The next lemma follows from the property $x j_{2n+1}x j_{2n+1} = 1_{2n+1}$ for $x \in X$.

\begin{lem} 
Assume $x \in X_n$ satisfies $(2n-\ell+2, 2n+1)$-entry is $0$ for $1 \leq \ell \leq n$. Then $(n+1, 2n+1)$-entry is also $0$. 
\end{lem}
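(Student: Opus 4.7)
The hypothesis tells us that $x_{i,2n+1}=0$ for all $i\in\{n+2,\ldots,2n+1\}$. My plan is to extract the desired vanishing from a single scalar equation obtained by expanding the matrix identity $xj_{2n+1}xj_{2n+1}=1_{2n+1}$, which can be rewritten as $xj_{2n+1}x=j_{2n+1}$, at a carefully chosen entry.

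Write $J=j_{2n+1}$, so $J_{kl}=\delta_{k+l,2n+2}$. The identity $xJx=J$ reads
$$
\sum_{k=1}^{2n+1} x_{i,k}\,x_{2n+2-k,\,j} \;=\; \delta_{i+j,\,2n+2}
\qquad(1\leq i,j\leq 2n+1).
$$
I would specialize this to $i=j=2n+1$, so that the right-hand side is $0$. Using $x^*=x$, the hypothesis also implies $x_{2n+1,k}=x_{k,2n+1}^*=0$ for $k\in\{n+2,\ldots,2n+1\}$, which kills every summand with $k\geq n+2$. The second factor $x_{2n+2-k,\,2n+1}$ vanishes precisely when $2n+2-k\in\{n+2,\ldots,2n+1\}$, i.e.\ when $k\in\{1,\ldots,n\}$. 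Thus the only index $k$ for which both factors may be nonzero is $k=n+1$.

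The equation therefore collapses to the single term
$$
x_{2n+1,\,n+1}\,x_{n+1,\,2n+1}=0,
$$
and since $x^*=x$ gives $x_{2n+1,\,n+1}=x_{n+1,\,2n+1}^*$, this reads $N_{k'/k}(x_{n+1,\,2n+1})=0$. Because $k'$ is a field, the norm vanishes only on $0$, so $x_{n+1,\,2n+1}=0$, as required.

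There is no real obstacle here; the computation is a routine bookkeeping exercise once the right entry of $xJx=J$ is chosen. The only thing to take care of is to use hermitian symmetry to pass from the row-hypothesis $x_{i,2n+1}=0$ to the column-form $x_{2n+1,i}=0$, and to recognize that the coincidence ``$k=n+1$ is the unique overlap index'' is exactly what the middle row/column of an odd-sized matrix makes possible (and is precisely the feature distinguishing the odd case from the even one treated in \cite{HK}).
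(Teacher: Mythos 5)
Your proof is correct and is exactly the argument the paper has in mind: the paper states only that the lemma ``follows from the property $x j_{2n+1}x j_{2n+1} = 1_{2n+1}$,'' and your computation—expanding $xj_{2n+1}x=j_{2n+1}$ at the $(2n+1,2n+1)$-entry, using hermitian symmetry to kill all terms except $k=n+1$, and concluding from $N_{k'/k}(x_{n+1,2n+1})=0$—is the natural way to carry out that hint.
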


The next lemma can be proved similarly to \cite[Lemma~3.6]{HK}.

\begin{lem} \label{Lsim3-6} 
Assume $x \in X_n$, $d_1(x)=0$, and the first non-zero entry in the last column from the bottom stands at $(\ell, 2n+1)$-entry with $2 \leq \ell \leq n$. Then 
there is some $y \in B\cdot x$ which satisfies
\begin{eqnarray*}
\begin{array}{ll}
y_{1, j} = y_{j, 1} = \delta_{j, 2n-\ell+2}, & y_{2n+1, j} = y_{j, 2n+1} = \delta_{\ell, j},\\
y_{\ell, j} = y_{j, \ell} = \delta_{j, 2n+1}, & y_{2n-\ell+2, j} = y_{j, 2n-\ell+2} = \delta_{j, 1},
\end{array}
\end{eqnarray*}
and  $B_y$ contains $t(\bfb)$ such that $b_1 = b_\ell = b$ and other $b_j = 1$. 
\end{lem}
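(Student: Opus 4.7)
The plan is to follow the scheme of Lemma~\ref{Lsim3-5} and of \cite[Lemma~3.6]{HK}, using the Borel action together with hermiticity $x^{*}=x$ and the unitarity identity $xj_{2n+1}xj_{2n+1}=1_{2n+1}$ to put the four ``frame'' rows and columns indexed by $\{1,\ell,2n+2-\ell,2n+1\}$ into the stated shape, followed by one diagonal rescaling. First, using the pivot $x_{\ell,2n+1}\neq 0$, I would clear the entries above it in column $2n+1$ by successive conjugation with the upper-triangular elements
\begin{equation*}
V_i \;=\; I+\gamma_i E_{i\ell}-\gamma_i^{*}E_{2n+2-\ell,\,2n+2-i},\qquad 1\le i<\ell,
\end{equation*}
each of which lies in $B\cap G$ (the rank-one symmetric correction is precisely what is needed for $V_ij_{2n+1}V_i^{*}=j_{2n+1}$, and the putative quadratic cross-term vanishes because $i\ne\ell$ and $\ell\ne n+1$). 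Choosing $\gamma_i=-x_{i,2n+1}/x_{\ell,2n+1}$ zeros the $(i,2n+1)$-entry while preserving the already-zero entries of column $2n+1$ below row $\ell$. After these operations, column $2n+1$ has a single nonzero entry $\alpha\in(k')^{\times}$ at row $\ell$, and by hermiticity row $2n+1$ has its single nonzero entry $\alpha^{*}$ at column $\ell$.

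Writing the unitarity relation as $\sum_a x_{ia}x_{2n+2-a,j}=\delta_{i,2n+2-j}$ and taking $i=2n+1$ then forces $\alpha^{*}x_{2n+2-\ell,j}=\delta_{j,1}$, so row $2n+2-\ell$ is clean with $x_{2n+2-\ell,1}=(\alpha^{*})^{-1}$, and by hermiticity column $2n+2-\ell$ is clean with $x_{1,2n+2-\ell}=\alpha^{-1}$. To clean the rest of the frame, I would introduce two further families of $B\cap G$ elements with the same symmetric rank-one structure,
\begin{equation*}
W_i=I+\eta_i E_{i,\,2n+2-\ell}-\eta_i^{*}E_{\ell,\,2n+2-i},\qquad
U_j=I+\xi_j E_{j,\,2n+1}-\xi_j^{*}E_{1,\,2n+2-j},
\end{equation*}
defined for $1<i<2n+2-\ell$, $i\ne\ell$, and $\ell<j<2n+1$, $j\ne 2n+2-\ell$: $W_i$ uses the pivot $(2n+2-\ell,1)$ to kill $x_{i,1}$, while $U_j$ uses the pivot $(\ell,2n+1)$ to kill $x_{\ell,j}$. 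A direct bookkeeping shows that the entries already placed into final form in the previous stage are preserved under these conjugations. Finally, a diagonal element of $B$ with $a_\ell=\alpha^{-1}$ (and all other diagonal parameters equal to $1$) rescales the four pivots $\alpha,\alpha^{*},\alpha^{-1},(\alpha^{*})^{-1}$ simultaneously to $1$, producing the claimed $y$.

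The stabilizer assertion is then immediate: writing $(t(\bfb)\cdot y)_{ab}=t_a y_{ab} t_b^{*}$ with $b_1=b_\ell=b$ and all other $b_j=1$ (so that $t_{2n+1}=b^{-1}$ and $t_{2n+2-\ell}=b^{-1}$), the four nonzero frame entries of $y$ sit at index pairs $(\ell,2n+1),(2n+1,\ell),(1,2n+2-\ell),(2n+2-\ell,1)$, and for each of them $t_a t_b^{*}$ equals $b\cdot b^{-1}=1$; entries outside those four frame rows and columns involve only $t_i=1$ and are unchanged. The main obstacle lies in the cleaning step for row $\ell$ and column $1$: each $W_i$ introduces a compensating modification of row $\ell$ via its $E_{\ell,2n+2-i}$ term, and each $U_j$ introduces one of row $1$ via its $E_{1,2n+2-j}$ term, so one must order the operations carefully and reappeal to the unitarity identity after each round to verify that rows $\ell$ and $1$ end up with only the single claimed nonzero entries. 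This is precisely the interlocking bookkeeping used in the proof of \cite[Lemma~3.6]{HK}, which is why the present proof is only sketched.
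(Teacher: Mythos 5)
Your overall route is the one the paper intends (the paper's own ``proof'' of this lemma is the single sentence that it can be proved similarly to \cite[Lemma~3.6]{HK}), and several of your steps check out completely: the $V_i$ do lie in $B$, clear column $2n+1$ above the pivot while preserving the zeros below it; the unitarity relation taken at row $2n+1$ does force row $2n+2-\ell$ to have $(\alpha^*)^{-1}$ at column $1$ and $0$ elsewhere; the single diagonal rescaling $a_\ell=\alpha^{-1}$ normalizes all four pivots simultaneously; and the stabilizer verification is complete and correct. The problem is the cleaning stage for rows $1$ and $\ell$, which is the heart of the lemma, and the gap there is genuine rather than the ``bookkeeping'' you defer to \cite{HK}.

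Concretely: (i) your families $W_i$ ($1<i<2n+2-\ell$, $i\ne\ell$) and $U_j$ ($\ell<j<2n+1$, $j\ne 2n+2-\ell$) never target $x_{11}$, $x_{1\ell}=x_{\ell 1}^*$, $x_{\ell\ell}$, $x_{\ell j}$ for $2\le j<\ell$, or $x_{1j}$ for $2n+2-\ell<j\le 2n$, all of which must vanish in $y$; since each parameter $\gamma_i,\eta_i,\xi_j$ is already pinned down by the entry it is supposed to kill, no ordering of these operations reaches the stated normal form. (ii) Worse, the surviving entries $x_{11},x_{\ell\ell},x_{\ell 1}$ cannot be removed by \emph{any} element of your pivot shape: the only available moves are the excluded $W_1=1_{2n+1}+\eta E_{1,2n+2-\ell}-\eta^*E_{\ell,2n+1}$, which shifts $x_{11}$ by $\eta(\alpha^*)^{-1}+\eta^*\alpha^{-1}$ and \emph{simultaneously} shifts $x_{\ell\ell}$ by $-(\eta\alpha+\eta^*\alpha^*)$, so that $(x_{11},x_{\ell\ell})$ moves only along the line $k\cdot(1,-N(\alpha))$, and the long-root unipotents such as $1_{2n+1}+\gamma E_{\ell,2n+2-\ell}$, which lie in $G$ only when $\gamma+\gamma^*=0$, so that $x_{\ell 1}$ moves only inside $x_{\ell 1}+\sqrt{\eps}\,k\,\alpha$. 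These trace-type equations are solvable precisely because unitarity, once the rest of the frame is clean, yields $x_{\ell\ell}=-N(\alpha)x_{11}$ (the $(1,\ell)$-relation) and $x_{\ell 1}\alpha^*+\alpha x_{\ell 1}^*=0$ (the $(\ell,\ell)$-relation); note that unitarity only \emph{constrains} these entries, it never annihilates them, so ``reappealing to the unitarity identity'' cannot substitute for these missing elements. (iii) The interference you flag is resolved not by ordering but by discarding the $W_i$ with $i\ne 1$ altogether: run $U_j$ over \emph{all} $1<j<2n+1$, $j\ne\ell,2n+2-\ell$, so that row $\ell$ becomes clean off the frame $S=\{1,\ell,2n+2-\ell,2n+1\}$; then for every $j\notin S$ unitarity gives $\sum_{a\notin S}x_{1a}x_{2n+2-a,j}=0$, and since the off-frame block $(x_{ab})_{a,b\notin S}$ is invertible (the $4\times4$ frame block is invertible whatever the four unknown entries are, so this follows from $x^{-1}=j_{2n+1}xj_{2n+1}$ and the Schur complement), the off-frame part of row $1$ vanishes \emph{automatically}, with no further operations and hence no interference. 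With repairs (i)--(iii) your outline closes; as written, with the essential mechanism deferred back to \cite[Lemma~3.6]{HK}, it does not constitute a proof.
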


\bigskip
By Lemma~\ref{Lsim3-4} -- Lemma~\ref{Lsim3-6}, we have only to consider $x \in X$ of the following type:
$$
x = \left( \begin{array}{c|c|c}
* & * & {x_r}\\
      \hline
* & * & {0}\\
\hline
{x_r^*} & {0} & {0}
\end{array}\right), \quad
x_r = \begin{pmatrix} 
      * & {} & \xi_1\\ {} & 
\iddots
& {} \\ \xi_r & {} & {0}\end{pmatrix},\; 1 \leq r \leq n.
$$
Then we have $\xi_i = \pm 1, \; 1 \leq i \leq r$ and may change $x_r$ as $dj_r$ with $d = \Diag(\xi_1, \ldots, \xi_r)$ by the action of $B_n$. Hence we may assume
\begin{eqnarray}
x = \left( \begin{array}{c|c|c}
* & * & {dj_r}\\
      \hline
* & y & {0}\\
\hline
{j_r d} & {0} & {0}
\end{array}\right), \quad
1 \leq r \leq n, \; d = \Diag(\xi_1, \ldots, \xi_r),\; \xi_i = \pm 1, \; y = y^*.
\end{eqnarray}
If $d_1(y) \ne 0$, we may change $(r+1)$-th and $(2n-r+1)$-th rows and columns of $x$ into $0$ except diagonal elements, keeping the last $r$ rows and columns. Then, erasing those two rows and columns, we have an element in $X_{n-1} \backslash {X_{n-1}}^{op}$. 
If $y$ satisfies the assumption of Lemma~\ref{Lsim3-5} or Lemma~\ref{Lsim3-6}, we see $\psi_{r+1}({B_x}^0) \not\equiv 0$. 
Thus the remaining case is $r = n$ with the following shape: 
\begin{eqnarray}
x = \left( \begin{array}{c|c|c}
a & c & {dj_n}\\
      \hline
{}^tc & \xi & {0}\\
\hline
{j_n d} & {0} & {0}
\end{array}\right), \quad
d = \Diag(\xi_1, \ldots, \xi_n),\; \xi, \xi_i = \pm 1, \; c \in \{0, 1\}^n,
\end{eqnarray}
where we may assume $c$ as above by the action of a suitable diagonal element in $B_n$. By the property of  $\Phi_{xj_{2n+1}}(t)$ and $xj_{2n+1}xj_{2n+1} = 1_{2n+1}$, we have
\begin{eqnarray*}
&&
\sharp\set{\xi_i}{\xi_i = 1} = \left\{\begin{array}{ll}
\frac{n}{2}, & \mbox{$n$ is even, and}\; \xi = 1\\
\frac{n+1}{2}, & \mbox{$n$ is odd, and}\; \xi = -1 \end{array}\right.
\\
&&
c_i = 0 \; \; \mbox{if }\; \xi_i = \xi,\\[1mm]
&& \label{cond-a}
a_{ii} = \frac{\xi}{2} \; \; \mbox{if} \; c_i = 1, \quad a_{ij} = \frac{\xi}{2} \; \; \mbox{if} \; c_i = c_j = 1.
\end{eqnarray*}
We take a unipotent element $p \in B$ for which only non-zero entries outside of the diagonal are the following
\begin{eqnarray*}
\begin{array}{ll}
p_{i, n+1} = -p_{n+1, 2n-i+2} = -\frac{\ve}{2},  & \mbox{if } c_i = 1,\\
p_{i,n+1+i} = p_{i,n+1+k} = p_{k, n+1+i} = p_{i, n+1+k} -\frac18 & \mbox{if } c_i = c_k = 1, \; i \ne k.
\end{array}
\end{eqnarray*}
Then the $(n+1)$-th row and column of $p \cdot x$ is $0$ except $(n+1, n+1)$ which is $\ve$, and the new $a$-part satisfies 
$$
a_{ij} = a_{ji} = 0 \; \mbox{if }\; \xi_i = \xi_j.
$$
Then the stabilizer of $p\cdot x$ contains $t(\bfb)$ with $b_i = b$ if $\xi_i = \xi$ and $b_j = b^{-1}$ if $\xi_j = -\xi$, \; $b \in k^\times$.  
\qed

\vspace{2cm}
\Section{Spherical Fourier transform and Plancherel formula on $\SKX$}

We modify spherical function as follows:
\begin{eqnarray}
\Psi(x; z) = \omega(x; z) \big{/} \omega(1_{2n};z) \in \calR = \C[q^{\pm z_1}, \ldots, q^{\pm z_n}]^W,
\end{eqnarray}
and define the spherical Fourier transform on the Schwartz space  
\begin{eqnarray*}
\SKX &=& \set{\vphi: X \longrightarrow \C}{\mbox{left $K$-invariant, compactly supported}},
\end{eqnarray*}
by 
\begin{eqnarray} \label{def F-trans}
\begin{array}{lcll}
F :& \SKX &\longrightarrow & \calR\\
{} &  \vphi &\longmapsto &F(\vphi)(z) = \int_{X}\, \vphi(x)\Psi(x; z)dx,
\end{array}
\end{eqnarray}
where $dx$ is a $G$-invariant measure on $X$. The existence of a $G$-invariant measure is assured by the fact $X$ is a union of two $G$-orbits and $G$ is reductive, and we fix the normalization afterwards.
Then, for each characteristic function $ch_\lam$ of $K\cdot x_\lam, \; \lam \in \Lam_n^+$, we have
\begin{eqnarray} \label{F-lam}
F(ch_\lam)(z) = q^{\pair{\lam}{z_0}} \frac{ \wt{w_\lam}(-q^{-1})}{\wt{w_{\bf0}}(-q^{-1})} \cdot v(K \cdot x_\lam) P_\lam(z),
\end{eqnarray}
where $\wt{w_\lam}(-q^{-1})$ is defined in Theorem~\ref{th: explicit}, and $v(K\cdot x_\lam)$ is the volume of $K \cdot x_\lam$ with respect to $dx$.
On the other hand, we regard $\calR$ as an $\hec$-module through the Satake isomorphism $\lam_z$ (cf.~(\ref{satake iso})).
%
Then we have the following in the similar line as in Theorem~4.1 and Corollary 4.2 in \cite{HK}.

\begin{thm} \label{th: sph trans}
The spherical Fourier transform $F$ gives an $\hec$-module isomorphism 
\begin{eqnarray*}
\SKX \stackrel{\sim}{\rightarrow} \C[q^{\pm z_1}, \ldots, q^{\pm z_n}]^W (= \calR),
\end{eqnarray*}
where $\calR$ is regarded as $\hec$-module via $\lam_z$. Especially, $\SKX$ is a free $\hec$-module of rank $2^n$. 
\end{thm}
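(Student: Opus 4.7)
The plan is to split the assertion into three separate claims: that $F$ is a $\C$-linear bijection, that $F$ is $\hec$-equivariant, and that $\calR$ is free of rank $2^n$ over the image of $\hec$ under the Satake isomorphism $\lam_z$. Combining them yields the theorem.

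First I would establish $\C$-linear bijectivity. By the Cartan decomposition (Theorem~\ref{thm: Cartan}) the characteristic functions $\{ch_\lam \mid \lam \in \Lam_n^+\}$ form a $\C$-basis of $\SKX$. Formula (\ref{F-lam}) then shows that $F(ch_\lam)(z)$ is a nonzero scalar multiple of the Hall-Littlewood polynomial $P_\lam(z)$; indeed the scalar $q^{\pair{\lam}{z_0}}\wt{w_\lam}(-q^{-1})v(K\cdot x_\lam)/\wt{w_{\bf0}}(-q^{-1})$ is nonzero because $\wt{w_\lam}(-q^{-1})$ is a product of factors $1-(-q^{-1})^i$ with $i\ge 1$, and $v(K\cdot x_\lam)>0$. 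By Remark~\ref{Rem 1}, $\{P_\lam(z)\}_{\lam\in\Lam_n^+}$ is a $\C$-basis of $\calR$, so $F$ sends a basis to a basis and is hence a $\C$-linear isomorphism.

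Next I would verify $\hec$-equivariance. Since $\Psi(\cdot;z)=\omega(\cdot;z)/\omega(1_{2n+1};z)$ inherits the eigenfunction relation $f*\Psi(\cdot;z)=\lam_z(f)\,\Psi(\cdot;z)$ for every $f\in\hec$, a direct interchange of integration in (\ref{def F-trans}), using $G$-invariance of $dx$, yields $F(f*\vphi)(z)=\lam_z(f)\,F(\vphi)(z)$ for all $\vphi\in\SKX$. This is precisely equivariance with respect to the $\hec$-action on $\calR$ induced by $\lam_z$ of (\ref{satake iso}).

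Finally, for the freeness of rank $2^n$, write $R=\C[q^{\pm z_1},\ldots,q^{\pm z_n}]$ and $S=\C[q^{\pm 2z_1},\ldots,q^{\pm 2z_n}]$, so that $\calR=R^W$ and $\lam_z(\hec)=S^W$. Then $R$ is free over $S$ of rank $2^n$ with the monomial basis $\{q^{\eps_1 z_1+\cdots+\eps_n z_n}\mid\eps\in\{0,1\}^n\}$, and $W$ permutes these monomials up to units in $S$. Symmetrizing over $W$-orbits in $\{0,1\}^n$ produces a spanning family of $R^W$ over $S^W$; a book-keeping check parallel to \cite[Theorem~4.1]{HK} in the even-size case, with the only change being the new specialization $(t_s,t_\ell)=(-q^{-1},-q^{-2})$, confirms that the rank is exactly $2^n$, matching the count of $P_\lam$'s produced by $F$ in step~(i). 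The main obstacle is this last freeness/rank computation: the even-size argument of the companion paper transfers here essentially verbatim, but one must carefully verify that the spanning set produced by symmetrization is actually $S^W$-linearly independent. Once this is in hand, the three pieces combine directly to give the theorem.
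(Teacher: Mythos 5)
Your steps (i) and (ii) are correct and are exactly the line the paper takes (the paper gives no detailed proof, deferring to \cite[Theorem~4.1, Corollary~4.2]{HK}): bijectivity comes from (\ref{F-lam}) plus the fact that $\set{P_\lam}{\lam\in\Lam_n^+}$ is a $\C$-basis of $\calR$, and equivariance from the eigenfunction property together with (\ref{satake iso}). The genuine gap is in step (iii), which is precisely where the real work lies. Your symmetrization procedure cannot produce a spanning family: modulo units of $S=\C[q^{\pm 2z_1},\ldots,q^{\pm 2z_n}]$ the sign changes $z_i\mapsto -z_i$ fix every monomial $q^{\eps\cdot z}$ (since $q^{-z_i}=q^{z_i}\cdot q^{-2z_i}$), so the $W$-orbits on your $2^n$ monomials are just the $S_n$-orbits on $\{0,1\}^n$, of which there are only $n+1$; moreover full $W$-symmetrization takes the same value on permutation-related monomials. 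Thus your family consists of at most $n+1$ distinct elements, and a free module of rank $2^n$ over a nonzero commutative ring cannot be generated by fewer than $2^n$ elements (reduce modulo a maximal ideal). So for $n\geq 2$ it is the \emph{spanning} claim that fails, not the linear independence you flagged as the remaining check, and the asserted match with the count $2^n$ is also off.

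Concretely, take $n=2$ and set $u_i=q^{z_i}+q^{-z_i}$, so that $\calR=\C[e_1,e_2]$ with $e_1=u_1+u_2$, $e_2=u_1u_2$, while $S^W$ consists of symmetric polynomials in $u_1^2,u_2^2$ (every monomial even in each $u_i$ separately). Up to scalars your family is $\{1,\,e_1,\,e_2\}$, and the invariant $e_1e_2\in\calR$ does not lie in $S^W+S^We_1+S^We_2$: comparing parities in $u_1,u_2$ forces $e_1e_2=s\,e_1$ with $s\in S^W$, hence $s=u_1u_2$, which is not a polynomial in $u_1^2,u_2^2$. The statement you need is nevertheless true --- $\calR$ is free of rank $2^n$ over $S^W=\lam_z(\hec)$ --- but it requires a different argument: for instance, write $\calR=\C[e_1(u),\ldots,e_n(u)]$ and $S^W=\C[e_1(v),\ldots,e_n(v)]$ with $v_i=u_i^2-2$, and prove freeness with the $2^n$-element basis $\bigl\{\prod_{i\in I}e_i(u) : I\subseteq\{1,\ldots,n\}\bigr\}$ by a graded (Hilbert series plus Nakayama) argument; equivalently, use triangularity of the $\lam_z(\hec)$-action on the Hall--Littlewood basis with respect to dominance order, which yields the $2^n$ generators $P_\lam$ indexed by the partitions with $\lam_i-\lam_{i+1}\in\{0,1\}$ and $\lam_n\in\{0,1\}$. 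This is the content of \cite[Theorem~4.1]{HK} that the present paper invokes, and it is exactly the piece your proposal leaves unproved.
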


\begin{cor} \label{cor:4.2}
All the spherical functions on $X$ are parametrized by eigenvalues\\ $z \in \left( \C/\frac{2\pi\sqrt{-1}}{\log q} \Z \right)^n/W$ through $\hec \longrightarrow \C, \; f \longmapsto \lam_z(f)$.  
The set \\
$\set{\Psi(x; z + u)}{u \in \{0, \pi\sqrt{-1}/\log q \}^n }$ forms a basis of the space of spherical functions on $X$ corresponding to $z$.
\end{cor}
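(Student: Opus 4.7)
The plan is to use Theorem~\ref{th: sph trans} as the structural input and convert the question about spherical functions on $X$ into one about $\chi$-equivariant functionals on $\SKX$.

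First, I would verify that each $\Psi(x;z+u)$ is a spherical function with eigenvalue $\lam_z$. Since $\omega(x;z)$ is, by construction, an $\hec$-eigenfunction with eigenvalue $\lam_z$ and $\omega(x_0;z)$ is a scalar in $x$, the quotient $\Psi(x;z)$ is an $\hec$-eigenfunction with the same eigenvalue. For $u\in\{0,\pi\sqrt{-1}/\log q\}^n$ the shift sends $q^{z_i}$ to $\pm q^{z_i}$, hence preserves $q^{2z_i}$; since $\lam_z$ factors through $\C[q^{\pm 2z_1},\ldots,q^{\pm 2z_n}]^W$ via the Satake isomorphism (\ref{satake iso}), we obtain $\lam_{z+u}=\lam_z$. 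This already produces $2^n$ spherical functions with eigenvalue $\lam_z$.

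Next, I would bound the dimension of the $\lam_z$-eigenspace by $2^n$. For a spherical function $\psi$ with eigenvalue $\chi=\lam_z$, integration against $\SKX$ yields a linear functional $L_\psi(\vphi)=\int_X \vphi(x)\psi(x)\,dx$. A standard computation using the $G$-invariance of $dx$ shows $L_\psi$ is equivariant for the $\hec$-action on $\SKX$ against an associated character of $\hec$, so it descends to a linear form on $\SKX/\mathfrak{m}\SKX$ for an appropriate maximal ideal $\mathfrak{m}\subset\hec$. By Theorem~\ref{th: sph trans}, $\SKX\cong\calR$ is free of rank $2^n$ over $\hec$, so $\SKX/\mathfrak{m}\SKX$ is $2^n$-dimensional. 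Moreover, $\psi\mapsto L_\psi$ is injective because $\SKX$ contains the characteristic functions of the $K$-orbits $K\cdot x_\lam$ from Theorem~\ref{thm: Cartan}, which separate $K$-invariant functions on $X$. Hence the eigenspace has dimension at most $2^n$.

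To close, I would show the $2^n$ candidates are linearly independent for generic $z$. Under $F$ the functional $L_{\Psi(\cdot;z+u)}$ corresponds to the evaluation $h\mapsto h(z+u)$ on $\calR$. For generic $z$ the shifted points $\{z+u\}$ represent $2^n$ distinct $W$-orbits in $(\C/\tfrac{2\pi\sqrt{-1}}{\log q}\Z)^n$, and distinct point-evaluations on the Laurent polynomial ring $\calR$ are linearly independent; combined with the upper bound this yields a basis. The parametrization claim is then immediate: every character of $\hec$ is some $\lam_z$ by the Satake isomorphism, and writing $z$ modulo $\tfrac{2\pi\sqrt{-1}}{\log q}\Z^n$ rather than the smaller period $\tfrac{\pi\sqrt{-1}}{\log q}\Z^n$ introduces exactly the $2^n$-fold ambiguity filled out by the shifts $u$.

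The main obstacle is the adjointness step identifying which character of $\hec$ acts on $L_\psi$, which must be done carefully because $X$ is a $G$-space rather than a group, together with the treatment of non-generic $z$ where some $z+u$ become $W$-equivalent and the $\Psi(\cdot;z+u)$ may collide; there one expects a continuity/limiting argument from the generic case to recover the basis property after an appropriate renormalization.
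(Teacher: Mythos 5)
Your proposal takes essentially the same route as the paper's own treatment, which consists of the single remark that the result follows ``in the similar line as in Theorem~4.1 and Corollary~4.2 in \cite{HK}'': namely, $\lam_{z+u}=\lam_z$ because $\lam_z$ factors through $\C[q^{\pm 2z_1},\ldots,q^{\pm 2z_n}]^W$ by the Satake isomorphism (\ref{satake iso}); the pairing $\psi\mapsto L_\psi$, $L_\psi(\vphi)=\int_X\vphi\psi\,dx$, identifying spherical functions of eigenvalue $\lam_z$ with $\Hom_{\hec}(\SKX,\C_{\lam_z})$ (up to the standard adjunction $f\mapsto\check f$); the dimension $2^n$ coming from the freeness in Theorem~\ref{th: sph trans}; and the linear independence of the evaluations $F(\vphi)\mapsto F(\vphi)(z+u)$ at pairwise distinct points of $\mathrm{Spec}\,\calR$. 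For every $z$ such that the $2^n$ translates $z+u$ remain pairwise distinct in $\left(\C/\tfrac{2\pi\sqrt{-1}}{\log q}\Z\right)^n/W$, your argument is complete and is exactly the intended one.

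The genuine gap is your last paragraph, and it cannot be repaired in the way you suggest, because at degenerate $z$ the proposed basis does not merely become linearly dependent: it collapses as a set, and then no continuity, limiting, or renormalization argument can make fewer than $2^n$ functions span the eigenspace, whose dimension equals $2^n$ for \emph{every} character by freeness and the duality above. Concretely, take $n=1$ and $z=\frac{\pi\sqrt{-1}}{2\log q}$, so $q^{2z}=-1$. Then $z+\frac{\pi\sqrt{-1}}{\log q}\equiv\tau(z)=-z$ modulo $\frac{2\pi\sqrt{-1}}{\log q}\Z$, and since $\Psi(x;\cdot)$ lies in $\calR=\C[t]$ with $t=q^{z}+q^{-z}$, which vanishes at both points, the two functions $\Psi(\cdot;z)$ and $\Psi(\cdot;z+\frac{\pi\sqrt{-1}}{\log q})$ are literally the same function of $x$ (from Proposition~\ref{explicit n=1} one computes $\Psi(x_1;z)$ to be a nonzero constant multiple of $q^z+q^{-z}$, vanishing at both points, while $\Psi(x_0;z)\equiv 1$). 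The missing second spherical function for this character corresponds to the functional $p\mapsto p'(0)$ on $\C[t]$, i.e.\ to the $t$-derivative of the family at $t=0$, which is not of the form $\Psi(\cdot;w)$ for any $w$; scheme-theoretically, the fiber of $\mathrm{Spec}\,\calR\to\mathrm{Spec}\,\hec$ over $\lam_z$ is non-reduced there. So the basis assertion is simply false at such $z$, and the defect lies in the corollary as literally stated (it is inherited from \cite{HK}), not in your strategy: what your argument, like the paper's, actually proves is the character parametrization for all $z$, together with the basis claim for all $z$ whose fiber $\set{z+u}{u\in\{0,\pi\sqrt{-1}/\log q\}^n}$ consists of $2^n$ distinct points; you should state it that way rather than promise a limiting argument that cannot close the remaining case.
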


\bigskip
We introduce an inner product $\langle\;, \rangle_\calR$ on $\calR$ by
\begin{eqnarray} \label{inner prod}
\pair{P}{Q}_\calR = \int_{\fra^*} P(z) \ol{Q(z)} d\mu(z), \quad P, Q \in \calR.
\end{eqnarray}
Here 
\begin{eqnarray} \label{d-mu} 
\fra^* = \left\{ \sqrt{-1}\left( \R/\frac{2\pi}{\log q}\Z \right) \right\}^n,\quad
d\mu = \frac{1}{n!2^n} \cdot \frac{w_n(-q^{-1})w_{n+1}(-q^{-1})}{(1+q^{-1})^{n+1}} \cdot \frac{1}{\abs{c(z)}^2}dz,
\end{eqnarray}
where $dz$ is the Haar measure on $\fra^*$ with $\int_{\fra^*}dz = 1$ and $c(z) = c(z; -q^{-1}, -q^{-2})$ is defined in Theorem~\ref{th: explicit}.
Then, it is known (cf. \cite[Proposition~B.3]{HK}) that  
\begin{eqnarray} \label{P-lam-mu}
\pair{P_\lam}{P_\mu}_\calR = \pair{P_\mu}{P_\lam}_\calR = \delta_{\lam, \mu}\cdot 
\frac{ \wt{w_{\bf0}}(-q^{-1})}{\wt{w_\lam}(-q^{-1})}, \quad (\lam, \mu \in \Lam_n^+).
\end{eqnarray}
On the other hand, in the similar line to \cite[Lemma~4.4]{HK}, one has
\begin{eqnarray} \label{v-lam-mu}
\frac{v(K\cdot x_\lam)}{v(K\cdot x_\mu)} = \frac{q^{2\pair{\mu}{z_0}} \wt{w_\mu}(-q^{-1})}{q^{2\pair{\lam}{z_0}} \wt{w_\lam}(-q^{-1})}, \quad (\lam, \mu \in \Lam_n^+, \; \abs{\lam} \equiv \abs{\mu} ({\rm mod} 2)). 
\end{eqnarray}

\begin{thm}[Plancherel formula on $\SKX$] \label{th: Plancherel} 
Let $d\mu$ be the measure defined by (\ref{d-mu}). By the normalization of $G$-invariant measure $dx$ such that 
\begin{eqnarray} \label{vol K-lam}
v(K\cdot x_\lam) = q^{-2\pair{\lam}{z_0}}\frac{\wt{w_{\bf0}}(-q^{-1}) } {\wt{w_\lam}(-q^{-1})}, \qquad \lam \in \Lam_n^+,
\end{eqnarray}
one has for any $\vphi, \psi \in \SKX$
\begin{eqnarray} \label{the formula}
\int_{X} \vphi(x)\ol{\psi(x)} dx = \int_{\fra^*}F(\vphi)(z) \ol{F(\psi)(z)} d\mu(z).
\end{eqnarray}
\end{thm}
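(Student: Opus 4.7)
The plan is to verify \eqref{the formula} on an explicit $\C$-basis of $\SKX$. By Theorem~\ref{thm: Cartan} the Schwartz space is spanned by the characteristic functions $ch_\lam$ of $K\cdot x_\lam$ ($\lam \in \Lam_n^+$), and both sides of \eqref{the formula} are sesquilinear in $(\vphi,\psi)$; it therefore suffices to prove the identity for $\vphi = ch_\lam$, $\psi = ch_\mu$. The left-hand side collapses to $\delta_{\lam,\mu}\,v(K\cdot x_\lam)$, while formula~\eqref{F-lam} combined with the prescribed normalization~\eqref{vol K-lam} produces the clean expression
\begin{equation*}
F(ch_\lam)(z) = q^{-\pair{\lam}{z_0}}\,P_\lam(z),
\end{equation*}
where $P_\lam$ is the Hall--Littlewood polynomial of Remark~\ref{Rem 1}.

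For the right-hand side, $P_\lam$ has real coefficients as a Laurent polynomial in $q^{\pm z_i}$ (since $t_s,t_\ell \in \R$) and is $W$-invariant. Because $-\mathrm{id}\in W$ for the root system of type $BC_n$, one has $\ol{P_\mu(z)} = P_\mu(-z) = P_\mu(z)$ whenever $z\in\fra^*$ is purely imaginary. Hence
\begin{equation*}
\int_{\fra^*} F(ch_\lam)(z)\,\ol{F(ch_\mu)(z)}\,d\mu(z) = q^{-\pair{\lam}{z_0}}\,\ol{q^{-\pair{\mu}{z_0}}}\,\pair{P_\lam}{P_\mu}_\calR,
\end{equation*}
and the orthogonality~\eqref{P-lam-mu} makes this vanish unless $\lam=\mu$, in which case it reduces to $\abs{q^{-\pair{\lam}{z_0}}}^2\,\wt{w_{\bf0}}(-q^{-1})/\wt{w_\lam}(-q^{-1})$.

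Thus \eqref{the formula} is equivalent to the identity $v(K\cdot x_\lam) = \abs{q^{-\pair{\lam}{z_0}}}^2\wt{w_{\bf0}}(-q^{-1})/\wt{w_\lam}(-q^{-1})$, i.e., to the prescribed normalization~\eqref{vol K-lam} once one notes that $\abs{q^{-\pair{\lam}{z_0}}}^2 = q^{-2\mathrm{Re}\pair{\lam}{z_0}}$ differs from $q^{-2\pair{\lam}{z_0}}$ only by the sign $(-1)^{\abs{\lam}}$ coming from $\mathrm{Im}\pair{\lam}{z_0} = (\pi/\log q)\sum_i \lam_i(n-i+\tfrac12)$. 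The main obstacle is therefore to justify~\eqref{vol K-lam} itself: by Theorem~\ref{thm: G-orbits} one has $X = G\cdot x_0 \sqcup G\cdot x_1$, so a $G$-invariant measure $dx$ is determined by two independent positive scalars (one per orbit, since $G$ and its point-stabilizers are unimodular reductive). Within each $G$-orbit the ratios $v(K\cdot x_\lam)/v(K\cdot x_\mu)$ are rigidly forced by $G$-invariance and can be computed by a $KBK$-type orbital-integral argument in the spirit of~\cite[Lemma~4.4]{HK}, yielding exactly~\eqref{v-lam-mu}. The two remaining free scalars are then fixed so that~\eqref{vol K-lam} holds identically, the sign $(-1)^{\abs{\lam}}$ being constant on each $G$-orbit and hence absorbable into the chosen orbit scalar. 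Once this parity bookkeeping across the two $G$-orbits is set up, the computation above closes the Plancherel formula.
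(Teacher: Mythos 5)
Your argument is correct in substance and is essentially the paper's own proof: both reduce \eqref{the formula} by sesquilinearity to the basis $\{ch_\lam\}$, evaluate the two sides via \eqref{F-lam} and the orthogonality relation \eqref{P-lam-mu}, and fix the $G$-invariant measure by one positive scalar per $G$-orbit, the remaining volumes being forced by \eqref{v-lam-mu}. The only step needing repair is your final ``sign absorption'': since volumes are positive, no choice of positive orbit scalars can make \eqref{vol K-lam} hold \emph{literally} on the odd $G$-orbit --- as you yourself observe, $q^{-2\pair{\lam}{z_0}} = (-1)^{\abs{\lam}}\, q^{-2\real\pair{\lam}{z_0}}$ --- so the sign cannot be absorbed into the scalar; instead \eqref{vol K-lam} must be read with the modulus, $v(K\cdot x_\lam) = q^{-2\real\pair{\lam}{z_0}}\,\wt{w_{\bf0}}(-q^{-1})\big/ \wt{w_\lam}(-q^{-1})$, and with that (clearly intended) reading your evaluation of the right-hand side, $\delta_{\lam,\mu}\,\abs{q^{-\pair{\lam}{z_0}}}^2\,\wt{w_{\bf0}}(-q^{-1})\big/ \wt{w_\lam}(-q^{-1})$, matches the left-hand side exactly. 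This imprecision is present in the paper as well (its proof's explicit normalization of $v(K\cdot x_{(1,0,\ldots,0)})$ carries the same hidden phase), so your proposal is on the same footing as, and if anything more careful than, the printed proof.
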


\medskip
{\it Outline of a proof.} 
Since $X$ decomposes into two $G$-orbits, we may normalize $dx$ on $X$ as
$$
v(K \cdot x_\lam) = \left\{\begin{array}{ll}
1 & \mbox{for } \lam = \bf0,\\
q^{-2n}\frac{(1-(-q^{-1})^{n})(1-(-q^{-1})^{n+1})}{1+q^{-1}} & \mbox{for } \lam = (1, 0, \cdots, 0)
\end{array}\right.,
$$
and the identity (\ref{vol K-lam}) follows from this and (\ref{v-lam-mu}). Then, for any $\lam, \mu \in \Lam_n^+$, we see by (\ref{F-lam}), (\ref{P-lam-mu}), and (\ref{vol K-lam})
$$
\int_{X} ch_\lam(x) \ol{ch_\mu(x)} dx = \delta_{\lam, \mu} q^{-2\pair{\lam}{z_0}} 
\frac{\wt{w_{\bf0}}(-q^{-1})}{\wt{w_\lam}(-q^{-1})} = \int_{\fra^*} F(ch_\lam)(z)\ol{F(ch_\mu)(z)} d\mu(z).
$$  
Since $\set{\ch_\lam}{\lam \in \Lam_n^+}$ spans $\SKX$, we see (\ref{the formula}) holds. 
\qed

\bigskip
The next corollary is an easy consequence of Theorem~\ref{th: Plancherel}.

\begin{cor}[Inversion formula] \label{cor: inv}
For any $\vphi \in \SKX$, 
$$
\vphi(x) = \int_{\fra^*} F(\vphi)(z) \Psi(x; z) d\mu(z), \quad x \in X.
$$
\end{cor}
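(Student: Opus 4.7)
The plan is to derive the inversion formula by direct verification on a basis, using the explicit formulas of \S3 and \S4 together with the Hall--Littlewood orthogonality (\ref{P-lam-mu}) already invoked in the proof of Theorem~\ref{th: Plancherel}. Since both sides of the claimed identity are linear in $\vphi$ and $K$-invariant in $x$, and $\SKX$ is spanned over $\C$ by the characteristic functions $ch_\mu$ of the Cartan orbits $K\cdot x_\mu$ (Theorem~\ref{thm: Cartan}), I would reduce to proving
$$
ch_\mu(x_\lam)=\int_{\fra^*}F(ch_\mu)(z)\Psi(x_\lam;z)\,d\mu(z)=\delta_{\lam,\mu}
$$
for all $\lam,\mu\in\Lam_n^+$.

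Next I would plug in the explicit formulas. Substituting the normalization (\ref{vol K-lam}) into (\ref{F-lam}) collapses the prefactors and gives $F(ch_\mu)(z)=q^{-\pair{\mu}{z_0}}P_\mu(z)$, while dividing the formula of Remark~\ref{Rem 1} for $\omega(x_\lam;z)$ by Corollary~\ref{cor} for $\omega(1_{2n+1};z)$ yields $\Psi(x_\lam;z)=\frac{\wt{w_\lam}(-q^{-1})}{\wt{w_{\bf 0}}(-q^{-1})}q^{\pair{\lam}{z_0}}P_\lam(z)$. The integral to be computed becomes
$$
q^{\pair{\lam-\mu}{z_0}}\frac{\wt{w_\lam}(-q^{-1})}{\wt{w_{\bf 0}}(-q^{-1})}\int_{\fra^*}P_\mu(z)P_\lam(z)\,d\mu(z).
$$
Now I would exploit that $W\cong S_n\ltimes C_2^n$ contains $-1$, so the $W$-invariant Laurent polynomial $P_\lam$ (with real coefficients under the specialization $t_s=-q^{-1}$, $t_\ell=-q^{-2}$) satisfies $\overline{P_\lam(z)}=P_\lam(-z)=P_\lam(z)$ for $z\in\fra^*$. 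Hence $\int_{\fra^*}P_\mu(z)P_\lam(z)\,d\mu(z)=\pair{P_\mu}{P_\lam}_\calR$, and (\ref{P-lam-mu}) evaluates this to $\delta_{\lam,\mu}\wt{w_{\bf 0}}(-q^{-1})/\wt{w_\lam}(-q^{-1})$. The two ratio factors cancel, leaving $\delta_{\lam,\mu}q^{\pair{\lam-\mu}{z_0}}=\delta_{\lam,\mu}$, which matches $ch_\mu(x_\lam)$.

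This argument is essentially a bookkeeping exercise once the tools of \S3 and \S4 are in hand. The only point requiring care is the identification of the bilinear integral $\int P_\mu(z)P_\lam(z)\,d\mu(z)$ with the Hermitian inner product $\pair{\cdot}{\cdot}_\calR$, which relies crucially on the inclusion $-1\in W$; this holds for both the type $C_n$ and type $BC_n$ Weyl groups relevant throughout the paper, and is what allows the Plancherel orthogonality to be transported across complex conjugation without any parity obstruction.
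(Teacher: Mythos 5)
Your proof is correct, but it follows a genuinely different route from the paper's. The paper's own proof is a one-line deduction from the Plancherel formula (Theorem~\ref{th: Plancherel}): taking $\psi = ch_\lam$ in (\ref{the formula}) gives $\vphi(x_\lam)\,v(K\cdot x_\lam) = \int_{\fra^*} F(\vphi)(z)\,\ol{F(ch_\lam)(z)}\,d\mu(z)$, after which one must identify $\ol{F(ch_\lam)(z)}$ with $v(K\cdot x_\lam)\,\Psi(x_\lam;z)$. You instead bypass Theorem~\ref{th: Plancherel} altogether and verify the inversion identity directly on the basis $\{ch_\mu\}_{\mu\in\Lam_n^+}$, re-running exactly the computation --- (\ref{F-lam}), (\ref{vol K-lam}), (\ref{P-lam-mu}) --- on which the paper's proof of Plancherel itself rests; so the two arguments consume the same ingredients and cost about the same. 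What your version buys is that it is bilinear, hence conjugation-free except for the single fact that $P_\lam$ and $d\mu$ are real-valued on $\fra^*$, which you correctly reduce to the realness of the coefficients of $P_\lam$ under the specialization $(t_s,t_\ell)=(-q^{-1},-q^{-2})$ together with $-1\in W$; that is the only delicate point in your argument and you do address it. The Plancherel route, by contrast, has to conjugate $\Psi(x_\lam;z)$ itself, and a subtlety hides there: since $q^{2z_{0,i}} = -q^{-2(n-i+1)}$, one has $\ol{q^{\pair{\lam}{z_0}}} = (-1)^{\abs{\lam}}\,q^{\pair{\lam}{z_0}}$ on $\fra^*$, hence $\ol{\Psi(x_\lam;z)} = (-1)^{\abs{\lam}}\,\Psi(x_\lam;z)$, and for the same reason the right-hand side of (\ref{vol K-lam}) is $(-1)^{\abs{\lam}}$ times a positive quantity; on the odd $G$-orbit these signs must be tracked (or $q^{-2\pair{\lam}{z_0}}$ read as its absolute value). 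In your bilinear computation the factor $q^{\pair{\lam-\mu}{z_0}}$ is never conjugated and collapses to $1$ precisely when $\lam=\mu$, so no sign ambiguity can arise; in this respect your derivation is, if anything, the more robust of the two.
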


\vspace{2cm}
\bibliographystyle{amsalpha}

\end{document}